\documentclass[pdflatex,sn-mathphys-num]{sn-jnl}


\RequirePackage{amsthm,amsmath,amsfonts,amssymb}
\RequirePackage[numbers]{natbib}

\usepackage{graphicx} 
\usepackage{mathtools,  stmaryrd}
\usepackage{ulem}
\usepackage[english]{babel}
\usepackage{todonotes}
\usepackage[T1]{fontenc}
\usepackage{enumitem}

\usepackage{appendix}

\usepackage{latexsym,amsfonts,amssymb,mathrsfs,float}
\usepackage{amsthm}
\usepackage{amsmath}

\usepackage{mathtools,  stmaryrd}
\usepackage{ulem}
\usepackage[english]{babel}
\usepackage{todonotes}
\usepackage[T1]{fontenc}
\usepackage{enumitem}
\usepackage{geometry}
\geometry{a4paper, top=1in, bottom=1in, left=1in, right=1in}
\newcommand{\red}[1]{{#1}}

\newcommand{\bl}[1]{{#1}}

\newcommand{\gre}[1]{{#1}}

\def\p{{\mathbb P}}
\def\e{{\mathbb E}}
\def\q{{\mathbb Q}}
\def\h{{\mathsf H}}
\def\z{{\mathbb Z}}
\def\r{{\mathbb R}}
\def\N{{\mathbb N}}
\def\d{\, \mathrm{d}}
\def\x{{\mathbf x}}
\def\X{{\mathbf X}}
\def\Y{{\mathbf Y}}

\newcommand{\abs}[1]{\left\lvert #1 \right\rvert}

\allowdisplaybreaks

\def \cal{\mathcal}
\newtheorem{thm}{Theorem}[section]

\newtheorem{lem}[thm]{Lemma}

\newtheorem{pro}[thm]{Proposition}

\newtheorem{rem}[thm]{Remark}

\numberwithin{equation}{section}
\newenvironment{probis}
  {\addtocounter{thm}{-1}%
   \begin{pro}}
  {\end{pro}}

\raggedbottom

\begin{document}

\title[Favorite Sites for Simple Random Walk in Two and More Dimensions]{Favorite Sites for Simple Random Walk in Two and More Dimensions}


\author[1]{\fnm{Chenxu} \sur{Hao}}\email{haochenxu@pku.edu.cn}

\author[1]{\fnm{Xinyi} \sur{Li}}\email{xinyili@bicmr.pku.edu.cn}

\author[2]{\fnm{Izumi} \sur{Okada}}\email{\red{iokada@ms.u-tokyo.ac.jp}}

\author*[3]{\fnm{Yushu} \sur{Zheng}}\email{yszheng666@gmail.com}

\affil[1]{
\orgname{Peking University},
\orgaddress{\city{Beijing}, \country{China}}}


\affil[2]{
\orgname{\red{The University of Tokyo}}, \orgaddress{\city{\red{Tokyo}}, \country{Japan}}}

\affil[3]{
\orgname{Academy of Mathematics and Systems Science, Chinese Academy of Sciences}, \orgaddress{\city{Beijing}, \country{China}}}


\abstract{On the trace of a discrete-time simple random walk on $\mathbb{Z}^d$ for $d\geq 2$, we consider the evolution of favorite sites, i.e., sites that achieve the maximal local time at a certain time. For $d=2$, we show that almost surely three favorite sites occur simultaneously infinitely often and eventually there is no simultaneous occurrence of four favorite sites. For $d\geq 3$, we derive sharp asymptotics of the number of favorite sites. This answers an open question of Erd\H{o}s and R\'{e}v\'{e}sz (1984), which was brought up again by Dembo (2005).}

\keywords{Random walk, local time, favorite site}


\pacs[MSC Classification]{60F15, 60J55}

\maketitle

\section{Introduction}\label{Introduction}
Let $(S_n)_{n\geq 0}$ be a discrete-time simple random walk on the integer lattice $\mathbb{Z}^d$ for $d\ge 1$. We write
\begin{eqnarray*}
\xi(x,n)=\#\{0\le j\le n:S_j=x\};\qquad \xi^*(n):=\sup_{y\in{\mathbb{Z}^{d}}}\xi(y,n)
\end{eqnarray*}
for the local time of site $x$ at time $n$ and maximal local time at time $n$, respectively.

There is a considerable amount of interest and a huge literature on the asymptotics of $\xi^*(n)$ (as well as its counterpart for continuous-time random walk) and the distribution of thick points (also called $\alpha$-favorite points, which are sites that achieve at least a proportion of the maximal local time). Erd\H{o}s and Taylor (1960) (see \cite{ET60}) gave sharp bounds on $\xi^*(n)$ for $d\geq 3$ and derived up-to-constants bounds for $d=2$, with upper bound conjectured to be appropriate. Nearly forty years later, the seminal work \cite{DPRZ01} by Dembo, Peres, Rosen and Zeitouni confirmed this conjecture, which states
\begin{align}\label{max local time}
\lim_{n\to\infty}\dfrac{\xi^*(n)}{(\log n)^2}=\dfrac{1}{\pi}~~~~\text{a.s.},
\end{align}
for $d=2$ and also established the growth exponent of thick points. Later on, Rosen gave in \cite{Rosen05} a random walk proof of this conjecture (see also \cite{BR07} for a refinement of this argument). The link between local time of random walk and Gaussian free field (which corresponds to log-correlated when $d=2$) sheds deeper insight into this subject. Another proof of the Erd\H{o}s-Taylor conjecture was given in \cite{Jego20} exploiting this link with GFF.  In \cite{Oka20}, one of the authors of this work derived the growth exponents of pairs of thick points. %
In \cite{Jego23}, Jego identified the scaling limit of the occupation measure of thick points of random walk as the supporting measure of thick points of Brownian motion, partially constructed for the first time in \cite{BBK94} later extended to the full range of parameters in \cite{AHS20,Jego20b,Jego21}. 
A very precise conjecture was also made in \cite{Jego21} on the finer asymptotics of $\xi^*(n)$ (see \eqref{eq:Jegoconj} for the exact statement) and the structure of near-favorite sites of 2D random walk. 
\red{The work \cite{Rosen23} made some partial progress in this direction.} 
We also mention that results of the same flavor have been established on regular trees; see \cite{BL24}.

For any $n\ge 0$, we denote by
\begin{eqnarray*}
\mathcal{K}^{(d)}(n):=\Big\{x\in\mathbb{Z}^d:\xi(x,n)=\xi^*(n)\Big\}
\end{eqnarray*}
the set of favorite sites at time $n$. 
A major direction of research is the cardinality of $\mathcal{K}^{(d)}(n)$. A classical question of Erd\H{o}s and R\'{e}v\'{e}sz \cite{ER84,ER87} reads as follows:
\begin{equation}
    \label{ERquestion}
\mbox{Can $\#\mathcal{K}^{(d)}(n)=r$ occur infinitely often for $r\ge3$ and $d\ge1$?}
\end{equation}
For $d\geq 3$, the answer is positive: Erd\H{o}s and R\'{e}v\'{e}sz themselves showed in \cite{ER91} that \bl{a.s.} $\#\mathcal{K}^{(d)}(n)=r$ i.o.~for any $r\in\mathbb{Z}^+$. When $d=1$, it was proved (first for favorite edges in \cite{TW97} and later for favorite sites in \cite{Ba01}) that \bl{a.s.} $\#\mathcal{K}^{(1)}(n)\leq 3$ eventually\footnote{We say that a sequence of events $(A_n)_{n\geq 1}$ occurs eventually, if $\liminf_{n\to\infty}1_{A_n}=1$.} and it remained open for a long time whether $\#\mathcal{K}^{(1)}(n)=3$ i.o.. Much later, Ding and Shen showed in \cite{DS18} that this is indeed the case. A similar result also exists for favorite edges (see \cite{HHMS22} for more details). We also refer readers to \cite{Oka16} for a survey on favorite sites.
\red{The work \cite{CDH18} examined this classical question in the context of null-recurrent randomly biased walks within a random environment on a supercritical Galton-Watson tree. Additionally, \cite{GNR21} addressed the same question concerning persistent random walk.}


\subsection{\red{Main results}}

In this work, we will give a complete answer to the open question \eqref{ERquestion} of Erd\H{o}s and R\'{e}v\'{e}sz by characterizing the asymptotic behavior of $\#\mathcal{K}^{(d)}(n)$ for $d\geq 2$.

For $d=2$, we show that the number of favorite sites has the same behavior as the $d=1$ case (note that trivially $\liminf_{n \to \infty} {\#\mathcal{K}^{(d)}(n)}=1$).
\begin{thm}\label{mainthm2}
$(d=2)$ Almost surely, 
$$
\limsup_{n \to \infty} {\#\mathcal{K}^{(2)}(n)}=3.
$$
\end{thm}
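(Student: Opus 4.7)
The approach is to analyze $\#\mathcal{K}^{(2)}(n)$ via a simple level-set dynamics: at each step $n$, exactly one of the following occurs. If $S_n \in \mathcal{K}^{(2)}(n-1)$, then $\mathcal{K}^{(2)}(n) = \{S_n\}$ (a \emph{reset}). If $\xi(S_n, n-1) = \xi^*(n-1) - 1$ (so $S_n$ is a \emph{secondary site}), then $\mathcal{K}^{(2)}(n) = \mathcal{K}^{(2)}(n-1) \cup \{S_n\}$ (an \emph{augmentation}). Otherwise $\mathcal{K}^{(2)}$ is unchanged. Consequently, $\{\#\mathcal{K}^{(2)}(n) \ge 4\}$ requires three consecutive augmentations since the last reset, while $\{\#\mathcal{K}^{(2)}(n) \ge 3\}$ requires two. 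Both directions of the theorem reduce to quantifying the rarity of such ``augmenting runs.''

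For the upper bound $\limsup \le 3$, the plan is a Borel--Cantelli estimate on dyadic time windows $[2^k, 2^{k+1}]$. The probability of three successive augmenting excursions starting from a favorite $x$ is bounded by iterating a single-excursion estimate: the walk must reach some secondary site before revisiting any current favorite. Sharp 2D potential-theoretic estimates (Green's function and hitting probabilities on $\mathbb{Z}^2$), combined with quantitative control on the cardinality and geometry of secondary sites via the multifractal analysis of \cite{DPRZ01}, the two-point exponents of \cite{Oka20}, and the partial refinements in \cite{Rosen23}, should yield that each augmenting excursion succeeds with probability at most $(\log n)^{-a}$ for some $a>0$. Iterating three times and summing against the $O(n)$ possible start times within the window gives something summable in $k$.

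For the lower bound $\limsup \ge 3$, I would construct three-way ties on a sparse subsequence of scales with uniformly positive conditional probability. The first step is to show that configurations with two simultaneous favorites occur at arbitrarily large times, which follows softly from 2D recurrence and the fluctuating behavior of local times near the maximum. Given such a double-favorite configuration at some time $\tau$, the second step is to show that with probability bounded below uniformly in $\tau$, the walk reaches a well-chosen secondary site near the two current favorites before resetting, creating a three-way tie. Existence of such nearby secondary sites comes from a density argument for near-maximum local time points; the hitting estimate uses standard 2D Green-function bounds; and Borel--Cantelli applied to sufficiently independent dyadic scales closes the i.o.~argument.

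The principal obstacle is the summability in the upper bound. A naive first-moment estimate only shows that the expected number of triple-tie events within $[1,n]$ grows slowly, which by itself is far from enough to forbid four simultaneous favorites infinitely often. One must quantitatively exploit the \emph{joint} geometric rarity of three secondary sites being simultaneously reachable from a common favorite without any intermediate reset---a multi-point problem that is tied to the finer joint near-maximum local time asymptotics conjectured in \cite{Jego21}. Extending the sharp techniques of \cite{Rosen23} to accommodate three mutually-close-to-maximal sites on the walk trace is likely the key technical ingredient that makes the Borel--Cantelli sum close with room to spare.
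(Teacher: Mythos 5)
Your ``level-set dynamics'' observation (reset vs.\ augmentation vs.\ unchanged) is a clean way to think about the evolution of $\mathcal{K}^{(2)}(n)$, but the plan you build on it has genuine gaps in both directions.

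\textbf{Upper bound.} Summing three-augmentation events over $O(n)$ start times in a dyadic window $[2^k,2^{k+1}]$ cannot close: even if each augmentation costs $(\log n)^{-a}$, the per-window expected count is $2^k k^{-3a}$, which is not summable in $k$. The paper avoids this by reparametrizing time via the level $m$ of the running maximum $\xi^*(\cdot)$: the event ``four favorites of level $m$ ever occur'' ($M^4_m$) is a single event per integer $m$, and since $\xi^*(n)\sim(\log n)^2/\pi$, there are only polynomially many levels to sum over. Showing $\p(M^4_m)\lesssim m^{-(1+c)}$ for some $c>0$ is then what closes Borel--Cantelli. You cannot get there by citing \cite{DPRZ01,Oka20,Rosen23}: the bound is delicate (the paper only achieves roughly $m^{-3(1/3+c)}$, barely summable and far from the heuristic $m^{-3/2}$), and the real engine is a structural decomposition of the local time into an ``external'' part and a conditionally independent ``lazy'' part, combined with pairing of lattice sites, urn-model comparisons (Lemmas \ref{ty2}--\ref{ty1}), and a new lower-deviation estimate for $\xi^*$ (Proposition \ref{lower}) that the paper has to prove from scratch. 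None of this is in the cited references.

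\textbf{Lower bound.} Your claim that, conditioned on a double-favorite configuration, a third tie forms ``with probability bounded below uniformly'' is false: the conditional probability of the walk avoiding the two current favorites long enough to raise a third site to the same level decays like $m^{-1/2}$ (this is \eqref{stopping time for} in the paper). The correct argument is not a uniform lower bound but the generalized second Borel--Cantelli lemma applied to $\p(M^3_m\mid\mathcal{F}^1_m)\gtrsim m^{-1}$, exploiting that $\sum_m m^{-1}$ diverges at exactly the critical rate. So the ``uniformly positive probability on a sparse subsequence'' strategy has to be replaced by a conditional Borel--Cantelli at criticality, which is a materially different argument.
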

For $d\geq 3$, we greatly refine the result in \cite{ER91}: we derive the following sharp asymptotics of $\# \mathcal{K}^{(d)}(n)$.
\begin{thm}\label{mainthm-01}
$(d\geq 3)$ Almost surely,
\begin{align*}
\limsup_{n \to \infty} \frac{\#\mathcal{K}^{(d)}(n)}{\log\log n}
=-\frac{1}{\log \gamma_d},
\end{align*}
where $\gamma_d:= \p[S_n \neq S_0,\; \forall n\geq 1]$, which is the probability that a simple random walk never returns to the starting point.
\end{thm}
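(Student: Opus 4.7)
The plan is to prove matching upper and lower bounds on the $\limsup$, based on the heuristic that each additional simultaneous favorite site contributes an independent factor of $\gamma_d$ to the probability. Indeed, suppose at some time $n$ the maximal local time is $k$ and $r$ distinct sites $x_1,\ldots,x_r$ are tied at this value. For each $i$, letting $t_i$ be the time of the last (i.e., $k$-th) visit to $x_i$, the strong Markov property says that the walk from $t_i$ onwards must avoid $x_i$; by transience this has probability $\gamma_d$, and since $d\ge 3$ these $r$ avoidance events approximately decouple, giving a total cost of $\gamma_d^r$. On the other hand, within a dyadic window $[N,2N]$ the maximum local time $\xi^*$ (whose order $\log N$ is furnished by \cite{ET60}) takes only $O(1)$ distinct values, so within a single scale we get a constant number of ``chances''; but the number of scales in $[1,N]$ is $O(\log N)$, and together with $(\log N)\gamma_d^r\asymp 1$ this yields the threshold $r\asymp\log\log N/|\log\gamma_d|$.

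For the upper bound, fix $\varepsilon>0$, set $N_j=2^j$ and $r_j=(1+\varepsilon)\log\log N_j/|\log\gamma_d|$, and aim to establish the single-window estimate
\begin{equation*}
\mathbb{P}\Big[\sup_{n\in[N_j,N_{j+1}]}\#\mathcal{K}^{(d)}(n)\ge r\Big]\le C\gamma_d^r.
\end{equation*}
The count $\#\mathcal{K}^{(d)}(n)$ attains its local maxima only at the ``peak times'' $\tau_{k+1}-1$, where $\tau_k:=\inf\{n:\xi^*(n)=k\}$; by the Erd\H{o}s--Taylor asymptotic $\xi^*(n)/\log n\to 1/|\log(1-\gamma_d)|$ a.s., only $O(1)$ values of $k$ contribute in $[N_j,N_{j+1}]$ outside a negligible event. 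For each admissible $k$ I decompose the event $\{\#\{x:\xi(x,\tau_{k+1}-1)=k\}\ge r\}$ over the $r$-subsets of candidate sites. A Poisson-type approximation to $W_k:=\#\{x:\xi(x,n)\ge k\}$ (with mean $\lambda_k\asymp n\gamma_d(1-\gamma_d)^{k-1}$), combined with the decoupling of the $r$ avoidance events above, gives, when optimized at $\lambda_k\approx r$, the bound $C\gamma_d^r$. Substituting $r=r_j$ yields the summable upper bound $\sum_j j^{-(1+\varepsilon)}<\infty$, and Borel--Cantelli closes the argument.

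For the lower bound, set $r_j=(1-\varepsilon)\log\log N_j/|\log\gamma_d|$ and $E_j:=\{\sup_{n\in[N_j,N_{j+1}]}\#\mathcal{K}^{(d)}(n)\ge r_j\}$. A matching single-window \emph{lower} bound $\mathbb{P}[E_j]\ge c\gamma_d^{\,r_j}=cj^{-(1-\varepsilon)}$ is obtained via a second-moment argument on the number $Y_j$ of peak times in $[N_j,N_{j+1}]$ at which $r_j$ or more sites are tied; the first moment follows from the same Poisson heuristic as above (non-degenerate at the optimal $k$), and the pairwise second-moment term is controlled via the Green's function estimate $G(x)=O(|x|^{2-d})$. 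Since $\sum_j\mathbb{P}[E_j]=\infty$, it remains to upgrade this to $\mathbb{P}[\limsup_j E_j]=1$. For this I invoke near-independence of the $E_j$'s across distant scales: by transience, the walk beyond time $N_{j+1}$ has probability $1-o(1)$ of never revisiting the sites that produced the tied configuration for $E_j$, so $\mathbb{P}[E_j\cap E_{j'}]\le(1+o(1))\mathbb{P}[E_j]\mathbb{P}[E_{j'}]$ for $|j-j'|$ large. A Kochen--Stone lemma then gives $\mathbb{P}[\limsup_j E_j]>0$, which is upgraded to $1$ via Kolmogorov's zero-one law on the tail $\sigma$-field.

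The chief technical obstacle is the sharpness of the independence factor $\gamma_d$ in the single-window estimate: the $r$ avoidance events need to decouple quantitatively so the exponent is exactly $r|\log\gamma_d|$ and not $r(|\log\gamma_d|-\delta)$. The cleanest route is to condition on the last-visit time of each tied site, apply the strong Markov property site-by-site, and bound the residual two-point correlations via the Green's function. A secondary hurdle is the matching second-moment lower bound: one must weight the tied $r$-tuples towards well-separated configurations (say, pairwise distance $\ge n^{\alpha}$ for a suitable $\alpha>0$) so that their pairwise interactions are of strictly lower order, while the loss incurred by discarding close configurations is sub-exponential in $r$ and thus absorbed into the $(1-\varepsilon)$ margin.
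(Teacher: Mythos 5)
Your overall heuristic — each extra tied site costs an independent factor of $\gamma_d$, and the borderline $\log\log n/|\log\gamma_d|$ emerges from $(\log N)\gamma_d^r \asymp 1$ — is exactly the right one, but the execution differs from the paper in a structural way, and there are a few places where your route has real gaps.

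The paper's chief organizational device is to reparametrize by the local-time level $m$ rather than by natural time: it defines $T_m^k$ as the time the $k$-th site reaches local time $m$, and the event $M_m^k=\{T_m^k<T_{m+1}^1\}$ of $k$ simultaneous favorites at level $m$. Your ``peak-time'' event is, after unrolling, literally $M_k^r$; you then re-aggregate these level-$m$ events back into dyadic time windows $[N_j,2N_j]$, which is a detour. Working directly in $m$ gives a clean filtration $(\mathcal{F}_m^1)_m$ with $B_m^k\in\mathcal{F}_{m+1}^1$, and the lower bound becomes a one-line application of the \emph{generalized (conditional) second Borel--Cantelli lemma}: $\p(B_m^{K_m}\mid\mathcal{F}_m^1)\geq c\,m^{-1+\epsilon}$ sums to $\infty$ and gives a.s.\ occurrence directly. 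Your lower-bound route — in-window second moment, Kochen--Stone, then a $0$--$1$ law to upgrade positive probability to $1$ — is substantially heavier, and the last step is genuinely delicate: $\limsup_j E_j$ is \emph{not} a tail event of the increments (the favorite-site set at time $t$ depends on the whole path $S_{[0,t]}$), and you would have to show it is a.s.\ equal to one by arguing that, by transience, the walk eventually leaves the region seen in the first $n$ steps and never returns. That argument can be made to work, but you cannot simply invoke Kolmogorov's $0$--$1$ law off the shelf; the paper's conditional Borel--Cantelli avoids this entirely.

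The second gap concerns decoupling. You invoke a Green's-function control on ``residual two-point correlations,'' but that is not by itself enough when candidate tied sites are at bounded distance, where $\xi(x,\cdot)$ and $\xi(y,\cdot)$ are strongly positively correlated and the factorization $\p[\text{both thick}]\approx\p[\text{thick}]^2$ fails. The paper's key quantitative input is the Cs\'aki--F\"oldes--R\'ev\'esz--Rosen--Shi bound $\p(\xi(0,\infty)+\xi(y,\infty)>u)=(1-\gamma_d/(1+t_y))^u$ with $t_y=\p(\mathsf{H}_y<\infty)\leq 1-\gamma_d$, which yields $\sup_{y\neq0}\p(\xi(0,\infty)+\xi(y,\infty)>2\alpha a\log n)\leq n^{-a(1+\delta)}$ for a fixed $\delta=\delta(d)>0$. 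This uniform extra power $\delta$ is what makes the ``no close pairs'' event $E_n^\epsilon$ occur eventually (the paper's Lemma~3.2), and that in turn is what licenses the clean strong-Markov factorization at each $T_m^j$ with error $O(km^{-d/2+1})$, so that $(\gamma_d\pm cm^{-d/2+1})^{O(\log m)}=\gamma_d^{(1+o(1))\cdot}$ is sharp. Your Poisson heuristic for $W_k$ is consistent with this picture but is not a substitute for it; you would still need an \eqref{as2}-type estimate to control the contribution of close $r$-tuples in the upper bound. In short: correct program, different and heavier scaffolding, and the $0$--$1$ law step and the close-pair control are the two places you would have to do additional work that the paper's stopping-time parametrization makes automatic.
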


We also mention that in the proof of Theorem \ref{mainthm2}, through refining the argument of \cite{Rosen05}, the following \red{control on the} lower deviation of $\xi^*(n)$ is derived, which improves upon \eqref{max local time} and is of independent interest.
\begin{pro}\label{lower}
\red{$(d=2)$.} For any $\delta>0$, there exist constants $C=C(\delta)>0$, such that
\begin{equation}\label{eq:lower}
    \p\left(\xi^*(n)<\frac{1}{\pi}(\log n)^2-(\log n)^{8/5+\delta}\right) <Ce^{-\exp((\log n)^{3/5})}.
\end{equation}
\end{pro}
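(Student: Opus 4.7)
The plan is to quantify Rosen's random walk proof \cite{Rosen05} of the Erd\H{o}s--Taylor conjecture \eqref{max local time}, and its refinement \cite{BR07}, by tracking sharp concentration estimates at every scale rather than only qualitative ones. Rosen's argument identifies, with high probability, a site $x$ whose local time is close to $(\log n)^2/\pi$ via a branching-random-walk-like decomposition of the walk around $x$; the task here is to make the ``high probability'' explicitly double exponential.

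First, I would set up the standard excursion decomposition. For each candidate favorite site $x$ in the bulk of the walk, define $T_k(x)$ as the number of excursions between the concentric circles $\partial D(x, 2^{k-1})$ and $\partial D(x, 2^{k})$, for scales $k = 1, \ldots, K$ with $K = \lfloor \log_2 \sqrt n \rfloor$. Standard 2D hitting-probability estimates translate the event $\xi(x,n) \geq \frac{(\log n)^2}{\pi} - (\log n)^{8/5+\delta}$ into a condition on the centred log-ratios $Y_k(x) := \log(T_{k-1}(x)/T_k(x)) - \mu_k$ (with $\mu_k$ their means): the partial sums must stay within a suitable envelope at all sub-scales.

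Second, I would exploit the tree structure of candidate sites. Two sites $x, y$ at distance $\sim 2^{j}$ share their excursion counts at scales $k \geq j$ but have essentially independent $Y_k$ at scales $k < j$. I would group the $K$ scales into super-scales of length $L$ to be tuned and, at each super-scale, apply sub-Gaussian concentration across a family of well-separated descendants of a given candidate. The gain is that at the appropriate scale one obtains $\sim \exp((\log n)^{3/5})$ essentially independent descendants, so their simultaneous failure is of order $\exp(-\exp((\log n)^{3/5}))$, matching the target. A natural choice $L \sim (\log n)^{2/5}$ yields $K/L \sim (\log n)^{3/5}$ super-scales, and a Cauchy--Schwarz-type inequality relates the resulting per-scale deviation budget to the total local-time deficit $(\log n)^{8/5+\delta}$.

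The main obstacle will be making the near-independence of well-separated candidates quantitative: the residual correlation through shared high-scale excursions must be controlled by a truncation onto ``typical'' global excursion data, and the second-moment calculation on a sparse grid of candidate sites must be carried out with explicit error terms. Once this is handled, the super-exponentially small failure probability propagates through the multi-scale induction without eroding the independent-branch count $\exp((\log n)^{3/5})$, yielding the claimed bound.
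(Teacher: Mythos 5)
Your starting ingredients are right---Rosen's excursion decomposition around candidate sites, a multi-scale analysis of the excursion counts, and a second-moment calculation over a grid of well-separated candidates---and they match what the paper does in the Appendix (with explicit scales $r_{n,k}=e^{n-k}n^9$, $k=0,\dots,n$, and the ``$(n,\delta)$-successful'' events). You have also correctly identified that the $3/5$ exponent arises from balancing a small-ball / envelope constraint against the deviation budget; the paper realizes this via the Brownian small-ball probability in Lemma~\ref{maxabs}, balancing $3\delta$ against $1-2\delta$ at $\delta=1/5$.

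However, the key step of your plan---obtaining the double-exponential tail from ``$\sim\exp((\log n)^{3/5})$ essentially independent descendants'' \emph{within a single multi-scale tree}, so that ``their simultaneous failure is of order $\exp(-\exp((\log n)^{3/5}))$''---cannot work as stated. The hidden assumption is that each descendant branch succeeds with probability bounded away from $0$. That is false here: even after conditioning on favorable excursion counts at a super-scale, each sub-branch still has to beat a long remaining cascade of scales, and its success probability is only $\exp\{-\mathrm{poly}(\log n)\}$, not $\Omega(1)$. Indeed, the paper's second-moment estimate (Proposition~\ref{moment}) shows that, in a single box of radius $K_n\asymp e^n$, the expected number of $(n,\delta)$-successful points is $I'_n\ge \exp\{-n^{3/5+o(1)}\}$ and of the same order as an upper bound, so the probability of producing even one favorite-scale site in that box is only $\exp\{-(\log K_n)^{3/5+o(1)}\}$ and thus is \emph{not} double-exponentially close to $1$. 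No rearrangement of near-independent candidates \emph{inside one box} can beat that ceiling. Relatedly, invoking ``sub-Gaussian concentration'' for the deviation sums would at best give Gaussian tails, not a double exponential; here the double exponential is a union-over-independent-trials effect, not a concentration effect.

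The missing idea (which the paper uses and is genuinely simpler than what you propose) is to get the double exponential from \emph{disjoint time segments} of the walk, not from siblings in the tree. Once one proves the polynomial-exponent lower bound
\[
\p\!\left[\xi^*\!\big(n/e^{(\log n)^{3/5+2\epsilon}}\big)\ge \tfrac{1}{\pi}(\log n)^2-(\log n)^{8/5+4\epsilon}\right]\;>\;\exp\!\left\{-(\log n)^{3/5+\epsilon}\right\},
\]
one partitions $[0,n]$ into $e^{(\log n)^{3/5+2\epsilon}}$ consecutive time blocks, which are genuinely independent by the Markov property; the probability that every block fails is then
$\big(1-\exp\{-(\log n)^{3/5+\epsilon}\}\big)^{e^{(\log n)^{3/5+2\epsilon}}}\le \exp\!\big(-e^{(\log n)^{3/5}}\big)$ for $n$ large. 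So your intuition about ``$\exp((\log n)^{3/5})$ independent trials'' is directionally right, but those trials must be taken across time (where independence is exact), not across space within a single box (where, after paying for the shared high-scale excursions, the per-branch success probability is far too small).
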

Compared to conjecture in \cite{Jego21}, which reads (under our setup) as
\begin{equation}\label{eq:Jegoconj}
  \sqrt{\xi^*(n)} - \frac{1}{\sqrt{\pi}}\log n+\frac{1}{\sqrt{\pi}}\log\log n  \overset{\rm law}{\longrightarrow} {\rm Gumbel}^*,
\end{equation}
where ${\rm Gumbel}^*$ stands for (plausibly a mixture of) Gumbel distribution.
This bound is rather crude and most likely further improvable with some extra effort
, but sufficient for our use.

\subsection{\red{Sketch of the proof}}\label{sec:sketch}

\red{We first focus on the case $d=2$ and provide a brief outline of the proof strategy.} As we are going to see, there is considerable advantage in measuring the time elapsed by the growth of $\xi^*(\cdot)$,  which we denote by $m$ throughout this work, rather than the natural time $n$. In this regard, for $m,k\geq 1$, let $T_m^{k}=\inf\big\{n> T_m^{k-1}:\#\{x\in\mathbb{Z}^d:\xi(x,n)\geq m\}=k\big\}$ (where $T_m^{0}:=0$) stand for the first time \red{at which the $k$-th site whose local time is at least $m$ is created.} Note that typically $T^1_m\approx \exp(\sqrt{\pi m})$.

The lower bound of $\#\mathcal{K}^{(2)}(\cdot)$ follows from the observation that at $T^1_m$, with high probability the random walk creates another site with local time $m$ within the next \red{$\exp(2\sqrt{m})$} steps. \red{The probability for this site to become the second favorite site, is bounded below by} that of the random walk to avoid the first favorite site \red{for $\exp(2\sqrt{m})$} steps, which is of order $m^{-1/2}$ \red{(see Lemma \ref{stopping time estimate} for details)}. The analysis of the cost of creating a third favorite site is similar. One then applies generalized second Borel-Cantelli lemma to conclude that three favorite sites occur infinitely often. 

The upper bound of $\#\mathcal{K}^{(2)}(\cdot)$ is much more delicate. \red{Heuristically speaking,} to bound the probability of seeing extra favorite sites \red{of local time $m$}, we investigate $\xi(\cdot,T_m^k)$, $k=1,2,3$, the local time profile and produce various controls on the number of near-favorite sites (with local time \red{at least $m-m^{\alpha}$ and distance $\approx\exp(m^{\alpha'})$ from $L_m^k:=S_{T_m^k}$ for some $\alpha,\alpha'>0$) which can serve as candidates for next favorite site.  Once we have good control over the number of such candidate sites with possible combinations of location and the local time gap, we can use classical hitting probability bounds to calculate the cost of producing extra favorite sites.}

A crucial observation that enables us to control the number of near-favorite sites lies in the decomposition of local time profile into the local ($\approx$lazy) and external ($\approx$non-lazy) parts: if we were looking at a lazy random walk, then conditioned on the profile of the non-lazy local time, the lazy local time of a certain site depends only on the non-lazy local time of that site, and this conditional law is explicit. This observation allows us to modify independently the local time of a certain site and calculate precisely the cost of such operations. Similar but weaker properties exist for simple random walk, where we regard numbers of two-step excursions in a certain direction as lazy local time \red{(see \eqref{eq:twostep} and below for precise definitions). See Section \ref{Preliminaries}, in particular Propositions \ref{condilaw1}, \ref{condilaw11}, \ref{condilaw2} and \ref{condilaw21} for precise statements of this observation under different settings. As we are only able to manipulate half of the sites at the same time, we partition $\mathbb{Z}^2$ in pairs (i.e., domino tilings) and work with only one site in each pair instead; see \eqref{eq:Xdef}, \eqref{eq:Ydef} and \eqref{eq:Yprimedef} for precise definitions of pairings and Proposition \ref{pro:finalreduction} for actual usage.}

We then apply three rounds of screening \red{estimates} using different methods. In the first round, we apply Proposition \ref{lower} \red{(which implies Lemma \ref{Tkm} and then in turn allows us to obtain Proposition \ref{Mestimates}; see also Remark \ref{rem:1.3use} for the necessity of this ingredient} to produce a preliminary bound on the \red{``balancedness''} of the lazy local time (see Proposition \ref{Theta} for precise statements). We then use an analogy with the urn model (see Lemma \ref{ty2}) to produce a second screening (see Proposition \ref{bigalpha}).  Finally, through the analogy with another variant of the urn model (see Lemma \ref{ty1}) we produce a third bound (see Proposition \ref{smallalpha}) \red{which is sufficient for us to arrive at the final bound Proposition \ref{pro:finalreduction}.}

We also remark here that in the proof of upper bound we did not pursue a sharp control on the cost of seeing an extra favorite site. Heuristically speaking, Proposition \ref{pro:finalreduction} indicates that when maximal local time is $m$, the cost is bounded by \red{$O(m^{-\kappa})$ with some $\kappa$ slightly above $1/3$ (see \eqref{eq:kappa1}, \eqref{eq:kappa2} and \eqref{eq:Kappa} for relevant definitions)}, which is barely sufficient for the application of Borel-Cantelli but does not match at all the explicit lower bound $\Omega(m^{-1/2})$ from Section \ref{se:2dlb}.

Finally, we turn to Theorem \ref{mainthm-01}. The key observation is that when $d\geq 3$, thick points are formed in an independent fashion. In the proof, we use a quantitative bound (see \eqref{as2} for a precise statement) from \cite{Csaki} on the rareness of pairs of thick points to exclude possible pairs of favorite sites that are formed within a short period (see Lemma \ref{lem0} for details). We then use bounds on hitting probability of random walk and the Borel-Cantelli lemmas to derive precise asymptotics on the number of favorite sites.
\begin{rem}
We now briefly discuss some open questions that spring from this work.
\begin{enumerate}
    \item In a series of work \cite{BG85,LS04,Ba23}, precise asymptotic escape rate of the elements of $\mathcal{K}^{(1)}(n)$ were derived. The transience of $\mathcal{K}^{(2)}(n)$ was established in \cite{DPRZ01} (transience of $\mathcal{K}^{(d)}(n)$ is trivial for $d\ge3$). It is then very natural to wonder how fast favorite sites escape to infinity in two dimensions. 
    
    
    \item The proof of the upper bound in Theorem \ref{mainthm2} exploits the distinctive structure of the graph $\mathbb{Z}^2$. However, we expect the same behavior for the asymptotics of favorite sites for all periodical planar lattices. Can one develop an argument which is more robust and works on all lattices (and more general walks)?

\end{enumerate}
\end{rem}

\subsection{\red{Organization and conventions}}
We now briefly discuss the organization of this work. Section \ref{se:2} is dedicated to preliminaries, including various classical estimates on hitting and escape probabilities of random walk, decomposition of local time profiles and relevant calculations, as well as two versions of the urn model that serve as toy models in the analysis of the local time profile. Sections \ref{se:3} and \ref{se:4} contain the proof of Theorems \ref{mainthm-01} and \ref{mainthm2}. 
Finally, we give the proof of Proposition \ref{lower} in the Appendix, as it is rather technical and relatively independent from other parts of this work.

Finally, let us explain the convention concerning constants and asymptotic relations. We denote by $c,c',C$ and $C'$ positive and finite constants whose values are universal (apart from dependence on $d$) and may change from line to line. 
Extra dependence of these constants upon other variables is marked at the first occurrence. If $(a_n)$ and $(b_n)$ are non-negative sequences, then, we write
$a_n\lesssim b_n$ if there exists $c>0$ such that $a_n \leq c b_n$ for all $n$ and $a_n \asymp b_n$ if $a_n\lesssim b_n$ and $b_n\lesssim a_n$. For a sequence $(a'_n)$, we write $a'_n=O(b_n)$ if $|a'_n| \lesssim b_n$.
For a sequence $(c_n)$, we write $c_n=o(b_n)$ if $\lim_{n\to\infty}\frac{b_n}{c_n}=\infty$.

\section*{Acknowledgement} CH is supported by China Postdoctoral Science Foundation (No.\ GZC20230089). XL is supported by National Key R\&D Program of China (No.\ 2021YFA1002700 and No.\ 2020YFA0712900) and NSFC (No.\ 12071012). IO is supported in part by JSPS KAKENHI Grant-in-Aid  for Early-Career Scientists (No.~JP24K16931). YZ is supported by China Postdoctoral Science Foundation (No.\ 2023M743721 and No.\ 2025T180850).

\section{Preliminaries}\label{se:2}
We start with general notation. For any $l \in \mathbb{R}$, $[l]$ stands for the largest integer less than or equal to $l$. On $\z^2$, we set ${\bf e}_1=(1,0)$, ${\bf e}_2=(0,1)$, ${\bf e}_3=(-1,0)$, and ${\bf e}_4=(0,-1)$. Denote by
\[\z^2_{\rm e}:=\{x=(x_1,x_2)\in \z^2:x_1+x_2\text{ is even}\},\quad\mbox{ and }\quad\z^2_{\rm o}:=\z^2\setminus \z^2_{\rm e}.\]

For $x\in\mathbb{Z}^d$ and $r \in \mathbb{R}^+$, we write
$$
D(x,r)=\{z\in\mathbb{Z}^d:\ |x-z|\leq r\}
$$
for the (discrete) ball of radius $r$ centered at $x$.


\medskip

We now turn to random walk. 
Recall that $(S_n)_{n\geq 0}$ is a discrete-time simple random walk on the integer lattice $\mathbb{Z}^d$, $d\ge 1$, starting at $S_0=a$ and let $\p^a$ and $\e^a$ stand for the respective probability measure and expectation. We write $\p=\p^0$ and $\e=\e^0$ for short. We write  $S_{[i,j]}$ for the sub-path $(S_{i},S_{i+1},...,S_{j-1},S_{j})$. From time to time we also tacitly regard $S_{[i,j]}$ as a subset of $\mathbb{Z}^d$, when no confusion arises. We also write $S_{(i,j)}=S_{[i+1,j-1]}$ for short. 
For $A\subset\mathbb{Z}^d$, $x\in\mathbb{Z}^d$, $n\in\mathbb{N}$, we use $\h$ to denote the first hitting time. More precisely,
$$\h_A:=\inf\{k>0: S_k\in A\};~~~
\h_{x}:=\h_{\{x\}};~~~\h_{x,y}:=\h_{\{x,y\}}.$$
We also write $$\h_A(n)=\inf\{k\geq n: S_k\in A\}-n$$ for the first hitting time after time $n$.
For a subset $A\subset\z^2$ and $x,y\in \z^2$, define the following Green's function $G_A(x,y)$ as
\[G_A(x,y)=\sum_{i=0}^\infty\p^x(S_i=y,i<\mathsf{H}_{A^c}).\]
By Theorem 1.6.6 and Proposition 1.6.7 of \cite{La91}, we have the following estimates for the Green's function,
\begin{align*}
	G_{D(0,R)}(x_0,0)=\begin{cases}
		\frac{2}{\pi}\log R+c_0 +O(R^{-1})&\text{ if }x_0=0,\\
		\frac{2}{\pi}\log(R/\abs{x_0})+O(\abs{x_0}^{-1})&\text{ if }x_0\neq 0,
	\end{cases}
\end{align*}
where $c_0$ is a finite constant.

We now record a few classical estimates on hitting probabilities of 2D random walk.

\begin{lem}[$d=2$]\label{2dlem02} For any $n
\geq 1$,
\begin{align}\label{ET60 2.5}
\gre{\p\big(\h_0 \geq n \big)-\frac{\pi}{\log n}=O\left(\frac{1}{(\log n)^2}\right)};
\end{align}
\begin{align}\label{2d-lem for}
{\inf}_{x\in\mathbb{Z}^2}\p\big(\h_{0,x}\geq n\big)\asymp \dfrac{1}{\log n}.
\end{align}
For any $x\in\mathbb{Z}^2$,
\begin{align}\label{2dlem02 for-2}
\p\big(\h_{x}<\h_{0}\big)\gtrsim \frac{1}{\log |x|}.
\end{align}
For any $r>0$ and $x_0\neq0$,
\begin{align}\label{hitting}
\left\{\begin{aligned}
	\p(\mathsf{H}_{D(0,r)^c}<\mathsf{H}_0)&=\frac{1}{\frac{2}{\pi}\log r+c_0}\left[1+O\left(\frac{1}{r\log r}\right)\right],\\
	\p^{x_0}(\mathsf{H}_0<\mathsf{H}_{D(0,r)^c})&=\frac{\frac{2}{\pi}\log(r/\abs{x_0})}{\frac{2}{\pi}\log r+c_0}\left[1+O\left(\frac{1}{\abs{x_0}\log(r/\abs{x_0})}\right)\right].
\end{aligned}\right.
\end{align}
Moreover, we have that uniformly in $r<\abs{x}<R$,
\begin{align}\label{hitting2}
\left\{\begin{aligned}
	\p^x\left(\mathsf{H}_{\partial D(0,R)}<\mathsf{H}_{\partial D(0,r)}\right)&=\frac{\log(\abs{x}/r)+O(r^{-1})}{\log(R/r)},\\
	\p^x\left(\mathsf{H}_{\partial D(0,r)}<\mathsf{H}_{\partial D(0,R)}\right)&=\frac{\log(R/\abs{x})+O(r^{-1})}{\log(R/r)}.
\end{aligned}\right.	
\end{align}
\end{lem}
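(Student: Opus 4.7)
The plan is to treat these five estimates as a coherent package built on two ingredients already available: the displayed Green's function asymptotics for $G_{D(0,R)}$ just above the lemma, and the logarithmic potential kernel $a(\cdot)$ on $\mathbb{Z}^2$, which satisfies $a(y)=(2/\pi)\log|y|+\kappa_0+O(|y|^{-2})$ and is harmonic off $0$.

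I would prove \eqref{hitting} and \eqref{hitting2} first, since they will serve as tools for the rest. The second identity in \eqref{hitting} is the last-exit identity $\p^{x_0}(\h_0<\h_{D(0,R)^c})=G_{D(0,R)}(x_0,0)/G_{D(0,R)}(0,0)$, into which I substitute the two quoted Green's function expansions directly. The first identity in \eqref{hitting} then follows from a one-step decomposition at $0$ together with reversibility. For \eqref{hitting2}, I apply optional stopping to $a(S_n)$, which is harmonic on the annulus $D(0,R)\setminus D(0,r)$; the error $O(r^{-1})$ absorbs the roughness of $|S_{\h_{\partial D(0,r)}}|$ on the inner boundary together with the $O(|y|^{-2})$ remainder in the potential-kernel expansion.

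For \eqref{ET60 2.5}, I would exploit the renewal equation $u_n=\sum_{k=0}^{n}f_k u_{n-k}$ with $u_n=\p(S_n=0)$ and $f_n=\p(\h_0=n)$. The local CLT gives $u_{2k}=1/(\pi k)+O(k^{-2})$, so $\sum_{k\leq n}u_k=(\log n)/\pi+O(1)$. A quantitative renewal-Tauberian argument, carried out either via the generating function $\sum_k f_k z^k$ as in Erd\H{o}s-Taylor or by a direct second-moment analysis of the partial sums, upgrades $\p(\h_0\geq n)\sim \pi/\log n$ to the sharper remainder $O((\log n)^{-2})$. With \eqref{ET60 2.5} in hand, \eqref{2d-lem for} follows by a case split on $|x|$: the upper bound is immediate from $\h_{0,x}\leq\h_0$; for the lower bound, when $|x|\gg\sqrt{n\log n}$ I discard $x$ via a crude hitting bound showing $\p(\h_x\leq n)$ is much smaller than $1/\log n$, while for moderate $|x|$ I first take one step to a neighbor of $0$ away from $x$ and then apply \eqref{hitting2} on the punctured disk $D(0,2\sqrt n)\setminus\{0,x\}$ to force escape without hitting $\{0,x\}$, at cost $\gtrsim 1/\log n$. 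Finally, for \eqref{2dlem02 for-2}, I apply optional stopping to the potential kernel: after the first step to a neighbor $y$ of $0$, the process $a(S_{\cdot\wedge\h_0^+\wedge\h_x})$ is a martingale, and cutting off at exit from a large disk then yields $\p^y(\h_x<\h_0)\gtrsim 1/a(x)\asymp 1/\log|x|$.

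The main obstacle is the sharp remainder $O((\log n)^{-2})$ in \eqref{ET60 2.5}: a pure leading-order argument is not enough there, and one must run the renewal-Tauberian machinery with a quantitative error, carefully tracking the $O(k^{-2})$ term from the local CLT through the generating-function (or equivalent) analysis. Every other estimate in the lemma reduces to direct substitution of the displayed Green's function formula or to elementary optional stopping with $a(\cdot)$.
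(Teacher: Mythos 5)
The paper does not actually prove this lemma but cites \cite[(2.5)]{ET60}, \cite[Lemma 20.1 and (19.7)]{Re05}, the Green's function identity $\p^0(\h_{D(0,r)^c}<\h_0)=1/G_{D(0,r)}(0,0)$, and \cite[Exercise 1.6.8]{La91}. Your plan for \eqref{ET60 2.5}, the second line of \eqref{hitting}, and \eqref{hitting2} essentially reproduces those sources, which is fine. For the first line of \eqref{hitting}, rather than passing through the second identity (whose stated error at $|x_0|=1$ is $O(1/\log r)$ and thus swamps the $c_0/\log r$ correction you need), you should derive it directly from $\p^0(\h_{D(0,r)^c}<\h_0)=1/G_{D(0,r)}(0,0)$ as the paper does; this gives the sharp $[1+O(1/(r\log r))]$ factor with no loss.

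The real gap is in the lower bound of \eqref{2d-lem for}. Two issues. First, \eqref{hitting2} is an annulus-crossing estimate with a single circular hole; it says nothing about escaping the two-point obstacle $\{0,x\}$, so ``apply \eqref{hitting2} on the punctured disk $D(0,2\sqrt n)\setminus\{0,x\}$'' is not a valid step. You would need a genuine argument that $\p^0\big(\h^+_{\{0,x\}}>\h_{D(0,R)^c}\big)\gtrsim 1/\log R$ uniformly in $x$ with $|x|\le R$, e.g.\ via a capacity/subadditivity argument together with the estimate $\p^0(\h_x<\h_0^+\wedge\h_{D(0,R)^c})\,\p^x(\h_{D(0,R)^c}<\h_0)\le(1-c)/G_{D(0,R)}(0,0)$, or via a multiscale construction (exit $D(0,|x|/4)$ avoiding $0$, cross to $\partial D(0,4|x|)$ avoiding small disks around both $0$ and $x$, then exit $D(0,R)$ avoiding both). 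Second, even granting the escape estimate, exiting $D(0,2\sqrt n)$ before $\h_{\{0,x\}}$ does not yield $\h_{\{0,x\}}>n$: from $z\in\partial D(0,2\sqrt n)$ one has $\p^z(\h_{\{0,x\}}\le n)\asymp 1/\log n$, which is of the same order as the quantity you are trying to establish, not negligible. The fix is to escape a disk of radius $R=\sqrt n(\log n)^2$ (say), so that $\p(\h_{D(0,R)^c}\le n)$ is superpolynomially small, and then use $\p(\h_{\{0,x\}}>n)\ge \p(\h_{\{0,x\}}>\h_{D(0,R)^c})-\p(\h_{D(0,R)^c}\le n)$. A related but smaller issue affects your argument for \eqref{2dlem02 for-2}: cutting off the unbounded martingale $a(S_\cdot)$ at $\partial D(0,R)$ leaves a boundary term $\asymp(\log R)\,\p^y(\h_{D(0,R)^c}<\h_0\wedge\h_x)$ which does not vanish as $R\to\infty$, so you only get an upper bound on $\p^y(\h_x<\h_0)$ this way. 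Use instead the bounded harmonic function $a(\cdot-x)-a(\cdot)$ (which tends to $0$ at infinity) and apply optional stopping at $\h_{\{0,x\}}$; this yields the exact formula $\p^y(\h_x<\h_0)=\frac{a(x)+a(y)-a(y-x)}{2a(x)}$ and hence the lower bound $\gtrsim 1/\log|x|$ directly.
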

See \cite[(2.5)]{ET60} for a proof of \eqref{ET60 2.5}. A quick modification of \cite[Lemma 20.1]{Re13} and \cite[(19.7)]{Re13} yields \eqref{2d-lem for} - \eqref{2dlem02 for-2}. We can derive (\ref{hitting}) from the observations:
\[\p^0(\mathsf{H}_{D(0,r)^c}<\mathsf{H}_0)=\frac{1}{G_{D(0,r)}(0,0)}\text{ and }\p^{x_0}(\mathsf{H}_0<\mathsf{H}_{D(0,r)^c})=\frac{G_{D(0,r)}(x_0,0)}{G_{D(0,r)}(0,0)},\]
and derive (\ref{hitting2}) from \cite[Exercise 1.6.8]{La91}. We omit the proofs.



We will also need the following estimate for $d\geq 3$.
\begin{lem}[$d\ge3$]\label{Green} For any $n,k\geq 1$,
\begin{align}\label{hitk}
\sup_{x_1, \ldots ,x_k \in \mathbb{Z}^d}
\p\Big(\h_{\{x_1, \ldots ,x_k\}}(n)<\infty \Big) \lesssim kn^{-d/2+1}.
\end{align}
\end{lem}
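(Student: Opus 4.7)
The plan is to reduce to the single-point estimate via a union bound, then control $\p(\h_x(n) < \infty)$ for one site $x$ by an expected-occupation-time argument, using the classical local CLT bound for the transient simple random walk.

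First, I would use the trivial inequality
\[
\p\bigl(\h_{\{x_1,\ldots,x_k\}}(n) < \infty\bigr) \leq \sum_{i=1}^k \p\bigl(\h_{x_i}(n) < \infty\bigr),
\]
so it suffices to prove the uniform bound $\sup_{x \in \mathbb{Z}^d} \p(\h_x(n) < \infty) \lesssim n^{1-d/2}$ and multiply by $k$.

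Next, fix $x \in \mathbb{Z}^d$ and write $N_x := \#\{j \geq n : S_j = x\}$. On the event $\{\h_x(n) < \infty\}$, the strong Markov property applied at the random time $n + \h_x(n)$ shows that the conditional expectation of $N_x$ equals $G(x,x) = \e^x\bigl[\sum_{j\geq 0} \mathbf{1}_{S_j=x}\bigr] = 1/\gamma_d$, which is a finite positive constant (depending only on $d$) since the walk is transient for $d \geq 3$. Therefore
\[
\p\bigl(\h_x(n) < \infty\bigr) = \gamma_d \cdot \e[N_x] = \gamma_d \sum_{m \geq n} \p(S_m = x).
\]

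Finally, I would invoke the standard local CLT bound $\p(S_m = x) \lesssim m^{-d/2}$, uniform over $x$ with matching parity (and trivially zero otherwise), so that
\[
\sum_{m \geq n} \p(S_m = x) \lesssim \sum_{m \geq n} m^{-d/2} \lesssim n^{1-d/2},
\]
where the last inequality uses $d/2 > 1$ (this is the one place transience is essential). Combining this with the union bound above yields \eqref{hitk}. There is no real obstacle here: the estimate is classical, and the only technical point is to cite (or note in passing) the uniform local CLT bound $\p(S_m = x) \lesssim m^{-d/2}$ for simple random walk on $\mathbb{Z}^d$, e.g.\ from \cite{La91}.
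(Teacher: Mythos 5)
Your proof is correct and follows essentially the same route as the paper: a union bound over the $k$ sites, then control of the single-site probability $\p(\h_x(n)<\infty)$ by summing the transition kernel bound $\p(S_m=x)\lesssim m^{-d/2}$ (Lawler, Theorem 1.2.1) over $m\geq n$. Your intermediate step via $\gamma_d$ and the strong Markov property is a harmless refinement; the paper simply uses the cruder (but sufficient) union bound $\p(\h_x(n)<\infty)\le\sum_{m\geq n}\p(S_m=x)$ directly.
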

\begin{proof}
By a simple computation, 
\begin{equation*}
\sup_{x_1, \ldots ,x_k \in \mathbb{Z}^d}
\p\Big(\h_{\{x_1, \ldots ,x_k\}}(n)<\infty \Big)
\le  k \sup_{x \in \mathbb{Z}^d}
\p\Big(\h_x(n) <\infty \Big)
\lesssim  k n^{-d/2+1},
\end{equation*}
where the last inequality follows from the classic bound transition kernel of simple random walk (see \cite[Theorem 1.2.1]{La91}).
\end{proof}

We now turn to favorite sites. As discussed in Section \ref{Introduction}, it is much more convenient to make calculations in terms of the growth of maximal local time. To this end, we define the following stopping times. For any $m\in \z^+$, let
\begin{align}\label{tmkdef}
T_m^{0}=0~\mbox{and}~T_m^{k}=\inf\big\{n> T_m^{k-1}:\#\{x\in\mathbb{Z}^d:\xi(x,n)\geq m\}=k\big\}~\mbox{for}~k\ge 1\mbox{ and }L_m^k=S_{T_m^k}
\end{align}
stand for the time and location where the $k$-th site with local time no less than $m$ is created.
 We also define corresponding $\sigma$-algebras:
\begin{equation}\label{sigma}
\mathcal{F}_{m}^k:=\sigma\{S_{[0,T_{m}^k]}\}.
\end{equation}
Let 
\begin{align}\label{Mmk}
M_m^k:={\big\{\exists\, n\ge 0\;\text{s.t.}\;\#\mathcal{K}^{(d)}(n)=k\text{ and }\xi^*(n)=m\big\}}=\{T_{m}^k<T_{m+1}^1\}
\end{align}
be the event that $k$ favorite sites of local time $m$ occur simultaneously. {Since $\xi^*(n)\rightarrow\infty$ a.s., we have for any $k\in \z^+$,
\[\big\{\#\mathcal{K}^{(d)}(n)\ge k\text{ infinitely often in  }n\big\}\overset{\text{a.s.}}{=}\big\{M^k_m\text{ infinitely often in  }m\big\},\]
which the readers should always keep in mind.}
Note that the event $M^k_m$ can also be represented as
\begin{align}\label{representation}
	M_m^k=U^2_m\cap\cdots\cap U^k_m\text{ with }U^j_m:=\big\{S_n\notin\{L^1_m,\ldots,L^{j-1}_m\}\,\forall\,n\in(T^{j-1}_m,T^j_m]\big\}.
\end{align}
From the observation that $M_m^k= \{T_m^k<T_{m+1}^1\}$, it is easy to infer that for all $m,k\geq 1$,
\begin{equation}\label{Mmkmeasure}
    M_m^k \in \mathcal{F}_{m+1}^1.
\end{equation}



As discussed in Section \ref{Introduction}, the  decomposition of local time is a crucial tool in the analysis of the 2D favorite sites. 
{Observe that for each $n\ge 0$, the process $S$ perform a geometric number of excursions $(S_n,S_n+{\bf e}_1,S_n)$ before following another two-step path. Heuristically, these excursions are considered as the ``holding times'' of $S$ and the remaining part of $S$ is considered as the ``jump chain''. In addition, it is more convenient to do this decomposition according to the parity of the time when the excursions start. To this end, we let 
\begin{equation}\label{eq:twostep}
\begin{split}
	\mathcal{L}&:=\{k\ge 2:\text{$k$ is even and } S_{k-2}=S_{k-1}-{\bf e}_1=S_k\},\\
	\mathcal{L}'&:=\{k\ge 2:\text{$k$ is odd and } S_{k-2}=S_{k-1}+{\bf e}_1=S_k\},        
\end{split}
\end{equation}
and write
\begin{align}\label{N_n}
N_n:=n-2\#(\mathcal{L}\cap [2,n])-1_{\{n+1\in \mathcal{L}\}},\quad N'_n:=n-2\#(\mathcal{L}'\cap [2,n])-1_{\{n+1\in \mathcal{L}'\}}.
\end{align}
Denote by $\widetilde{S}$ (resp. $\widetilde{S}'$) the trajectory obtained from $S$ by removing all two-step excursions $(S_{k-2},S_{k-1},S_k)$ with $k\in \mathcal{L}$ (resp. $k\in \mathcal{L}'$). In particular, for any $n\ge 0$, $\widetilde{S}{[0,N_n]}$ (resp. $\widetilde{S}'{[0,N'_n]}$) can be obtained from $S{[0,n]}$ by removing all two-step excursions $(S_{k-2},S_{k-1},S_k)$ with $k\in \mathcal{L}\cap[2,n]$ (resp. $k\in \mathcal{L}'\cap[2,n]$) and the one-step path $(S_{n-1},S_n)$ if $n+1\in\mathcal{L}$.

We decompose the local time $\xi$ of $S$ into the contributions of ``jump chain'' and ``holding times'' respectively. Precisely,
{let 
\[\widetilde{\xi}(x,n):=\#\{j\in[0,n]:\widetilde{S}_j=x\},\qquad \widetilde{\xi}'(x,n):=\#\{j\in [0,n]:\widetilde{S}'_j=x\}\]
be the local times of $\widetilde{S}$ and $\widetilde{S}'$ respectively and for $x\in \z^2_{\rm e}$,
\begin{align*}
	\xi_{\rm L}(x,n)&:=\#\{k\in \mathcal{L}\cap[2,n]:S_{k-2}=x\},\quad \xi_{\rm L}(x+{\bf e}_1,n):=\xi_{\rm L}(x,n)+1_{\{n+1\in\mathcal{L},S_{n-1}=x\}},\\
 \xi'_{\rm L}(x,n)&:=\#\{k\in \mathcal{L}'\cap[2,n]:S_{k-2}=x\},\quad \xi'_{\rm L}(x+{\bf e}_1,n):=\xi'_{\rm L}(x,n)+1_{\{n+1\in\mathcal{L}',S_{n-1}=x\}}.
\end{align*}
Then we have for all $x\in \z^2$,
\begin{align}\label{2d local time}
\xi(x,n)=\widetilde{\xi}(x,N_n)+\xi_{\rm L}(x,n)=\widetilde{\xi}'(x,N'_n)+\xi'_{\rm L}(x,n).
\end{align}
We also call $\widetilde{\xi}(x,N_n)$ and $\xi_{\rm L}(x,n)$ (as well as $\widetilde{\xi}'(x,N'_n)$ and $\xi_{\rm L}'(x,n)$) the ``external'' and ``local'' (or ``lazy'') parts of $\xi$ respectively.  }


We record the following crucial observation on such decomposition of local time: the distribution of lazy local times depends on the external local times in an explicit fashion. Various versions of the conditional law of $\xi_{\rm L}$ given $\widetilde{\xi}$ are presented in Section \ref{Preliminaries}. Since each step of the ``jump chain'' is accompanied by a geometric ``holding time'' with success probability $15/16$, the conditional laws are closely related to the laws of a finite i.i.d. sum of geometric random variables, i.e., negative binomial distribution. Hence we introduce the following notation.
\begin{align}\label{CLT*}
p(i,j):=\frac{(i+j-1)!}{(i-1)!j!} \frac{15^i}{16^{i+j}} =
\p\Big(\sum_{\ell=1}^i\gamma_\ell=j\Big),
\end{align}
where $\{\gamma_\ell:\ell\ge 1\}$ are i.i.d. geometric random variables with success probability $15/16$. We also let
\begin{align*}
    \bar{p}(i,j):=p(i,j-i),
\end{align*}
which is related to the conditional law of $\xi$ given $\widetilde{\xi}$.

We will need the following estimates on $p(i,j)$. Lemma \ref{stirling01} allows us to compare the order of $\bar{p}(i,j_1)$ and $\bar{p}(i,j_2)$ for different $j_1$ and $j_2$, which will be crucial in the proof of Proposition \ref{bigalpha} relating the analysis of local time to urn models. 
\begin{lem}[Local central limit theorem]\label{stirling01}
	Let $\sigma^2=\frac{16}{225}$ be the variance of $\gamma_\ell$. Then there exists $\rho>0$ such that uniformly in all $i,j\ge 1$ with $\abs{i-j}<\rho i$,
	\begin{align*}
		\bar{p}(i,j)=\frac{1}{\sqrt{2\pi}\sigma}\exp\left\{-\frac{\big(j-\frac{16}{15}i\big)^2}{2\sigma^2 i}+O\left(\frac{1}{\sqrt{i}}+\frac{\abs{j-\frac{16}{15}i}^3}{i^2}\right)\right\}.
	\end{align*}
\end{lem}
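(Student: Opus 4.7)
The statement is a local central limit theorem for the negative binomial distribution, since $\bar p(i,j) = \p(S_i = j-i)$ where $S_i := \sum_{\ell=1}^i \gamma_\ell$ has mean $i/15$ and variance $\sigma^2 i$. My plan is to prove it by direct computation: set $k := j-i$ and $t := j - 16i/15 = k - i/15$, apply Stirling's formula to the three factorials in the closed form $\bar p(i,j) = \binom{i+k-1}{k}(15/16)^i(1/16)^k$, then Taylor expand around the mean $k = i/15$ in the deviation $t$. Choosing $\rho$ appropriately, the hypothesis $|i-j|<\rho i$ keeps $k/i$ in a bounded subinterval of $[0,\infty)$ bounded away from $0$ on the relevant range (the case of very small $k$ is trivial, since the target Gaussian is already of order $\exp(-ci)$ and dominates $\bar p(i,j)$ directly), so that all expansions below are uniform.

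Applying Stirling $\log n! = (n+\tfrac12)\log n - n + \tfrac12\log(2\pi) + O(1/n)$ to each of $(i+k-1)!,(i-1)!,k!$ produces
\[
\log \bar p(i,j) = \Phi(i,k) - \tfrac12\log(2\pi) + O(1/i)
\]
for an explicit $\Phi$ built from logarithms. At the mean $k = i/15$, a direct check shows that the terms of order $i$ cancel exactly: this is the saddle-point identity expressing that $15/16$, $1/16$ are precisely the tilts anchoring the negative binomial at its mean. The surviving $\tfrac12\log$-prefactors combine (using $\sigma^2 = 16/225$) into $-\tfrac12\log(2\pi i \sigma^2)$, reproducing the Gaussian normalization.

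For the deviation piece I would Taylor expand $\log(i+k-1)$ and $\log k$ in $t/i$ around $0$, to second order with cubic remainder. The linear-in-$t$ contribution vanishes (again by the saddle identity), the quadratic coefficient evaluates to $-1/(2\sigma^2 i)$ after the three log-contributions are collected, and the cubic remainder---controlled by the elementary bound $|\log(1+x) - x + x^2/2| \leq C|x|^3$ for $|x| \leq 1/2$---contributes $O(|t|^3/i^2)$ uniformly in the allowed range. Combined with the Stirling error $O(1/i)\subseteq O(1/\sqrt{i})$, this yields the claimed estimate.

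The main technical hurdle is the bookkeeping of cancellations through the three Stirling expansions and the explicit constants $\log(15/16), \log(1/16)$, both at order $i$ and at the $\tfrac12\log$ level. The cleanest conceptual shortcut is to read off the quadratic coefficient and the Gaussian prefactor from the cumulant expansion $\log\e[e^{\lambda\gamma_1}] = \lambda/15 + \lambda^2\sigma^2/2 + O(\lambda^3)$ via the Legendre-transform structure of the negative binomial moment generating function; this makes the identifications manifest and singles out the third cumulant as the natural source of the $|t|^3/i^2$ remainder. The residual Taylor estimate is then routine.
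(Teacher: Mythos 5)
Your Stirling/saddle-point computation is a correct and essentially self-contained proof, whereas the paper itself supplies no proof: it simply cites standard references (\cite[Section 2.3.1]{LS04}, \cite[Section 3.7]{dembo2009}). The structure you describe is exactly the standard argument: cancellation of the order-$i$ terms at $k=i/15$ (the saddle identity), vanishing of the linear-in-$t$ term, the quadratic coefficient $-1/(2\sigma^2 i)$ from $f''(0)=\frac1j-\frac1k\big|_{t=0}=-\frac{225}{16i}$, and a third-derivative bound giving $O(|t|^3/i^2)$. The only subtlety you gloss over, the near-boundary regime where $k=j-i$ is small and Stirling's $O(1/k)$ remainder is not $O(1/\sqrt{i})$, is harmless for the reason you indicate: there $|t|\asymp i$, so the $O(|t|^3/i^2)=O(i)$ error term swallows any $O(1)$ discrepancy; it would be cleaner to say this than to speak of the Gaussian ``dominating'' $\bar p$ directly, since one needs two-sided control.

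One point worth making explicit: your own computation of the $\tfrac12\log$-prefactors gives $-\tfrac12\log(2\pi i\sigma^2)$, i.e.\ the normalization $\frac{1}{\sqrt{2\pi i}\,\sigma}$, which is the correct LCLT prefactor but is \emph{not} what the lemma as written states (the lemma has $\frac{1}{\sqrt{2\pi}\sigma}$ with no $\sqrt{i}$). This is almost certainly a typo in the paper; note that both downstream uses of the lemma (Lemma \ref{pbound} and Proposition \ref{smallalpha}) only compare $\bar p(i,j_1)$ to $\bar p(i,j_2)$ at the same $i$, so the $i$-dependent prefactor cancels and the typo is inconsequential. You should say explicitly that you are proving the corrected statement rather than silently deriving a prefactor that contradicts the one written.
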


Lemma \ref{moderate} contains moderate deviation type estimates that will be useful in the proof of Proposition \ref{Theta}. 
\begin{lem}[Moderate deviation]\label{moderate}
	For any sequence $(a_n)$ with $\frac{a_n}{\sqrt{n}}\rightarrow \infty$ and $\frac{a_n}{n}\rightarrow 0$,
	\begin{align}
		\lim_{n\rightarrow \infty} \frac{n}{a_n^2}\log \left(\sum_{j>\frac{1}{15}n+a_n}p(n,j)\right)=-\frac{1}{2\sigma^2},\label{moderate1}\\
		\lim_{n\rightarrow \infty} \frac{n}{a_n^2}\log \left(\sum_{j<\frac{1}{15}n-a_n}p(n,j)\right)=-\frac{1}{2\sigma^2}.\label{moderate2}
	\end{align}
\end{lem}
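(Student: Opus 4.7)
The plan is to prove both \eqref{moderate1} and \eqref{moderate2} by the classical moderate-deviation strategy: a Chernoff-type exponential Markov upper bound matched by a lower bound that sums the local central limit theorem of Lemma \ref{stirling01} over a short window near the deviation threshold. I focus on \eqref{moderate1}; \eqref{moderate2} follows by the same argument with the exponential tilt chosen in the opposite direction.

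For the upper bound, writing $S_n = \sum_{\ell=1}^n \gamma_\ell$ with $\gamma_\ell$ i.i.d.\ geometric with parameter $15/16$, the target equals $\p(S_n > n/15 + a_n)$. Since $\gamma_1$ has exponential moments in a neighborhood of $0$, the cumulant generating function $\Lambda(t) := \log \e[e^{t\gamma_1}]$ is smooth near the origin with $\Lambda'(0) = 1/15$ and $\Lambda''(0) = \sigma^2$, so $\Lambda(t) = t/15 + \sigma^2 t^2/2 + O(t^3)$. Applying exponential Markov with the tilt $t_n = a_n/(n\sigma^2)$ (nonnegative and tending to $0$ since $a_n/n \to 0$),
\begin{equation*}
\p(S_n > n/15 + a_n) \le \exp\!\big(-t_n(n/15 + a_n) + n\Lambda(t_n)\big) = \exp\!\Big(-\frac{a_n^2}{2\sigma^2 n}(1 + o(1))\Big),
\end{equation*}
where the $o(1)$ absorbs $O(n t_n^3) = O(a_n^3/n^2) = o(a_n^2/n)$. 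Taking $\frac{n}{a_n^2}\log(\cdot)$ and passing to $\limsup$ yields the matching upper bound $-1/(2\sigma^2)$.

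For the lower bound, the identity $p(n, j) = \bar p(n, j + n)$ rewrites the sum as $\sum_{j' > 16n/15 + a_n} \bar p(n, j')$. I restrict it to the window $W_n := \{j' \in \mathbb{Z} : 16n/15 + a_n < j' \le 16n/15 + a_n + \lfloor \sqrt{n}\rfloor\}$. For every $j' \in W_n$ and $n$ large, one checks that $|j' - n| = (1+o(1))n/15 < \rho n$ (the $\rho$ in Lemma \ref{stirling01} must exceed $1/15$ for that lemma to cover the mode of $\bar p(n, \cdot)$), $|j' - 16n/15| = a_n(1+o(1))$, and $|j' - 16n/15|^3/n^2 = O(a_n^3/n^2) = o(a_n^2/n)$. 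Lemma \ref{stirling01} then gives, uniformly over $W_n$, $\bar p(n, j') \ge (c/\sqrt{n}) \exp(-a_n^2(1+o(1))/(2\sigma^2 n))$. Summing the $\asymp \sqrt{n}$ terms,
\begin{equation*}
\sum_{j' > 16n/15 + a_n} \bar p(n, j') \ge c' \exp\!\Big(-\frac{a_n^2}{2\sigma^2 n}(1 + o(1))\Big).
\end{equation*}
Since $n/a_n^2 \to 0$ (using $a_n/\sqrt{n} \to \infty$), the $O(1)$ prefactor contributes $(n\log c')/a_n^2 \to 0$ after multiplying the logarithm by $n/a_n^2$, yielding $\liminf \frac{n}{a_n^2}\log(\cdot) \ge -1/(2\sigma^2)$. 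Together with the upper bound this proves \eqref{moderate1}; \eqref{moderate2} follows by an identical argument with tilt $-a_n/(n\sigma^2)$ and a window of length $\lfloor \sqrt{n}\rfloor$ placed just below $16n/15 - a_n$.

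There is no serious conceptual difficulty here; the main obstacle is purely bookkeeping, namely verifying that the error terms in the Taylor expansion of $\Lambda$ and in Lemma \ref{stirling01} are genuinely $o(a_n^2/n)$, and that constant or polynomially small prefactors in the local-CLT lower bound vanish after multiplication by $n/a_n^2$. Both points rely crucially on the two-sided growth hypothesis on $(a_n)$: the bound $a_n/n \to 0$ keeps us inside the Gaussian regime of $\Lambda$ and of Lemma \ref{stirling01}, while $a_n/\sqrt{n} \to \infty$ ensures that subexponential prefactors are absorbed by the moderate-deviation rate $a_n^2/n$.
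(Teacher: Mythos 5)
Your proof is correct and takes essentially the same approach the paper implicitly takes: the paper merely states that Lemma~\ref{moderate} ``follows from the standard theories'' and refers to \cite[Section 2.3.1]{LS04} and \cite[Section 3.7]{dembo2009} without giving details, and your Chernoff-tilt upper bound together with a local-CLT lower bound summed over a $\sqrt{n}$-window is precisely that standard moderate-deviation argument. Note only that you correctly supply the $1/\sqrt{n}$ normalization factor in the local CLT, which is missing (presumably a typo) from the stated form of Lemma~\ref{stirling01}; without it the lower bound would not close, so your insertion is the right reading.
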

Lemmas \ref{stirling01} and \ref{moderate} follow from the standard theories. The readers are referred to \cite[Section 2.3.1]{lawler2010random} and \cite[Section 3.7]{dembo2009} for details.

In the proof of Theorem \ref{mainthm2}, we also need bounds on the upper deviations of the local times $\xi$ and $\widetilde{\xi}$ at the origin, and the stopping time $T^k_m$ when $k$ favorite sites of local time $m$ do occur.
\begin{lem}[$d=2$]\label{erdos_taylor}
$\phantom{a}$
	\begin{itemize}
		\item\label{one_point1} For any $\alpha>0$, there exists $c=c(\alpha)>0$ such that for all $n\ge 2$,
		\begin{align}\label{one_point_xi}
			\p\left\{\xi(0,n)\ge \alpha(\log n)^2\right\}\le n^{-\pi \alpha}(\log n)^c.
		\end{align} 
		\item\label{one_point2} Let $K_2=K_2(\beta):=\frac{15}{16\pi}(\log n)^2-2(\log n)^{\beta}$ with $\beta\in(1,2)$. Then for $1<\beta<2$ and all $n$ large ,
\begin{align}\label{side 01}
\p\left(\widetilde{\xi}(0,n)\ge K_2 \right)
\le n^{-1}e^{8(\log n)^{\beta-1}}.
\end{align}
	\end{itemize}
\end{lem}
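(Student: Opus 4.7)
My plan is to prove both parts via a Chernoff/moment-generating-function argument applied to appropriate sums of iid return times.

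For part (i), I would write $\{\xi(0,n)\ge k\}=\{R_1+\cdots+R_{k-1}\le n\}$ for $R_i$ iid copies of $\h_0$ (by strong Markov at each visit to $0$). Markov's inequality with the exponential MGF gives
\begin{equation*}
\p(\xi(0,n)\ge k)\le e^{\lambda n}\psi(e^{-\lambda})^{k-1},\qquad \psi(s):=\e[s^{\h_0}],
\end{equation*}
and \eqref{ET60 2.5} combined with an Abelian theorem yields $1-\psi(e^{-\lambda})\sim\pi/\log(1/\lambda)$ as $\lambda\downarrow 0$. Taking $\lambda=1/n$ and $k=\lceil\alpha(\log n)^2\rceil$ directly produces $\p\le C\,n^{-\pi\alpha}$, absorbing the $\alpha$-dependent constants into $(\log n)^{c}$.

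For part (ii), I would apply the same MGF strategy to the \emph{kept} returns of $S$ to $0$. The strong Markov property at each visit to $0$ shows that each such visit (beyond $S_0$) is independently ``removed'' in the construction of $\widetilde{S}$ with probability exactly $\tfrac14\cdot\tfrac14=\tfrac1{16}$, since removal requires the next two steps to form the specific excursion $0\to\mathbf{e}_1\to 0$. Writing $\mathcal{K}_K$ for the $S$-time of the $K$-th kept return and $\tau_n:=\inf\{m:N_m\ge n\}$ for the $S$-time when $\widetilde{S}$ has taken $n$ steps, one has $\widetilde{\xi}(0,n)\ge K_2\iff \mathcal{K}_{K_2-1}\le\tau_n$; a Chernoff bound on removed $S$-steps gives $\p(\tau_n>2n)\le e^{-cn}$, so it suffices to bound $\p(\mathcal{K}_{K_2-1}\le 2n)$.

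The key structural input is the iid decomposition $\mathcal{K}_{K_2-1}=\sum_{i=1}^{K_2-1}\widetilde R_i$ with $\widetilde R_i\overset{\mathrm{iid}}{\sim}2g+R^{\mathrm{keep}}$, where $g\sim\mathrm{Geometric}(15/16)$ counts removed returns between consecutive kept ones (each contributing length exactly $2$) and $R^{\mathrm{keep}}$, independent of $g$, is $\h_0$ conditioned on the corresponding visit being kept. A short computation then yields
\begin{equation*}
\e\bigl[e^{-\lambda\widetilde R}\bigr]=\frac{16\psi(e^{-\lambda})-e^{-2\lambda}}{16-e^{-2\lambda}}=1-\frac{16\pi}{15\log(1/\lambda)}+O\bigl((\log(1/\lambda))^{-2}\bigr)\quad\text{as }\lambda\downarrow 0.
\end{equation*}
Markov with $\lambda=1/(2n)$, combined with $K_2=\tfrac{15}{16\pi}(\log n)^2-2(\log n)^\beta$, then gives
\begin{equation*}
\p(\mathcal{K}_{K_2-1}\le 2n)\le C\exp\!\Bigl(-\tfrac{16\pi K_2}{15\log 2n}+O(1)\Bigr)\le C n^{-1}e^{(32\pi/15)(\log n)^{\beta-1}}\le n^{-1}e^{8(\log n)^{\beta-1}}
\end{equation*}
for all large $n$, since $32\pi/15<8$.

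The main obstacle is the iid decomposition of $\mathcal{K}_{K_2-1}$ into the blocks $\widetilde R_i$. It follows from the strong Markov property applied at each kept return: given the walk up to that time, the waiting until the next kept return is generated by a fresh sequence of iid pairs $(R_i,X_i)$ (return time and removal indicator) and therefore decomposes as $2g+R^{\mathrm{keep}}$ with the stated independent laws. The improved asymptotic constant $16\pi/15$ in $1-\e[e^{-\lambda\widetilde R}]$ (versus the bare $\pi$ appearing for $\h_0$ in part (i)) is precisely what converts the weaker rate $n^{-15/16}$---which would follow from applying part (i) directly to $\widetilde{\xi}$---into the sharper $n^{-1}$ required here.
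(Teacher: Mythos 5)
Your proposal is correct, but for part (ii) it follows a genuinely different route than the paper. The paper's proof is very short: it observes that $(\widetilde{S}_{2n})$ is an aperiodic Markov chain with increment covariance $\tfrac{16}{15}I_2$, applies the local CLT to get $\p(\widetilde{S}_{2n}=0)=\tfrac{15}{16\pi n}(1+O(n^{-1/2}))$, and then runs the classical Erd\H{o}s--Taylor routine (return-probability via Green's function, then $\p(\widetilde{\xi}(0,n)\ge k)\le\p(\widetilde{\h}_0\le n)^{k-1}$) directly on the jump chain $\widetilde{S}$. You instead stay with the original walk $S$ and use the holding-time decomposition: you identify the ``kept'' returns, represent the inter-kept-return gaps as i.i.d. blocks $2g+R^{\rm keep}$ with $g\sim\mathrm{Geom}(15/16)$, and compute the block MGF $\e[e^{-\lambda\widetilde R}]=\frac{16\psi(e^{-\lambda})-e^{-2\lambda}}{16-e^{-2\lambda}}\sim 1-\frac{16\pi}{15\log(1/\lambda)}$, which produces the same constant $\tfrac{16\pi}{15}$ that the paper gets from the covariance of $\widetilde{S}$. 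Both yield $n^{-1}e^{(32\pi/15)(\log n)^{\beta-1}+O(1)}\le n^{-1}e^{8(\log n)^{\beta-1}}$. Your route has the advantage of avoiding the local CLT for $\widetilde{S}$, but it requires a few things that should be spelled out: (a) the quantitative Abelian/Karamata step converting \eqref{ET60 2.5} into $1-\psi(e^{-\lambda})=\frac{\pi}{\log(1/\lambda)}+O((\log(1/\lambda))^{-2})$, since the plain $\sim$ version leaves an $o(1)$ that multiplied by $K_2/\log n\asymp\log n$ could swamp the $(\log n)^{\beta-1}$ correction; (b) the parity fact that all visits of $S$ to $0$ occur at even times, which is what makes the per-visit removal probability exactly $\tfrac1{16}$ under the definition of $\mathcal{L}$ (otherwise one would need $\widetilde{S}'$ for the odd sublattice); (c) the time-change bookkeeping $\widetilde{\xi}(0,n)\ge K_2\iff \mathcal{K}_{K_2-1}\le\tau_n$ is correct because $N_\cdot$ increases by $0$ or $1$ at each step, but the ``iff'' is not strictly needed -- the one-sided inclusion $\{\widetilde{\xi}(0,n)\ge K_2\}\subset\{\mathcal{K}_{K_2-1}\le 2n\}\cup\{\tau_n>2n\}$ suffices. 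For part (i), your Chernoff/MGF argument is an alternative derivation of the bound; the paper simply cites (3.11) of Erd\H{o}s--Taylor, whose original proof uses the more elementary $\p(\xi(0,n)\ge k)\le\p(\h_0\le n)^{k-1}$ together with \eqref{ET60 2.5}, and arguably yields the stated polylog error more transparently than the Abelian route.
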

\begin{proof}
	\eqref{one_point_xi} is \cite[(3.11)]{ET60}. For \eqref{side 01}, we note that $(\widetilde{S}_{2n})_{n\ge 0}$ is a Markov chain. The covariance matrix of the increments equals $\frac{16}{15}I_2$. It follows from the local central limit theorem that
\begin{align*} 
\p\left(\widetilde{S}_{2n}=0 \right)= \frac{15}{16}\frac{1}{\pi n}\left[1+O\left(\frac{1}{\sqrt{n}}\right)\right].
\end{align*}
Following the same routine as that in \cite[Sections 2 and 3]{ET60}, we get
	\begin{align*}
		\p\left(\widetilde{\xi}(0,n)\ge K_2 \right)&\le\left[1-\frac{1}{\frac{15}{16\pi}\big(\log n +O(1)\big)}\right]^{K_2}\\
		&=n^{-1}e^{2\cdot\left(\frac{15}{16\pi}\right)^{-1}(\log n)^{\beta-1}+O(1)}\le n^{-1}e^{8(\log n)^{\beta-1}}	\end{align*}
	for $n$ large.
\end{proof}
\begin{lem}[$d=2$]\label{Tkm}
Let 
\begin{equation}\label{eq:psidef}
\psi_m=\psi_m(\delta):=\exp\big\{\pi^{1/2}m^{1/2}+\pi^{13/10+\delta/2}m^{3/10+\delta/2}\big\}.
\end{equation} 
Then for any $\delta>0$, there exists $c=c(\delta)>0$ such that for all $m,k\ge 1$,
	\begin{align}\label{Tkm1}
		\p\left(T^k_m>\psi_m,M^k_m\right)<e^{-c m}.
	\end{align}
\end{lem}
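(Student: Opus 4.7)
My plan is to reduce the event in question to a lower-deviation event for $\xi^*(\psi_m)$ and then invoke Proposition \ref{lower}; the content of the lemma is essentially a corollary of that quantitative lower bound.

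First I would use the identification $M^k_m=\{T^k_m<T^1_{m+1}\}$ from \eqref{Mmk} to observe that $\{T^k_m>\psi_m\}\cap M^k_m$ forces $T^1_{m+1}>\psi_m$; in other words, no site has attained local time $m+1$ by time $\psi_m$, so
$$\{T^k_m>\psi_m\}\cap M^k_m \;\subset\; \{\xi^*(\psi_m)\le m\}.$$
This short observation is the only structural step in the proof: it removes $T^k_m$ (and the cardinality condition $M^k_m$) from the picture entirely and leaves a pure lower-deviation event in $n=\psi_m$.

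Next I would verify that the particular form of $\psi_m$ in \eqref{eq:psidef} is tailored so that Proposition \ref{lower} applies with the \emph{same} $\delta$. Writing $L:=\log\psi_m=\pi^{1/2}m^{1/2}+A$ with $A:=\pi^{13/10+\delta/2}m^{3/10+\delta/2}$, a binomial expansion gives
$$\frac{L^2}{\pi}-m \;=\; 2\pi^{-1/2}m^{1/2}A+\frac{A^2}{\pi}\;\sim\;2\pi^{4/5+\delta/2}m^{4/5+\delta/2},$$
while $L^{8/5+\delta}\sim\pi^{4/5+\delta/2}m^{4/5+\delta/2}$. The crucial factor $2$ in the first expression dominates, so for all $m$ sufficiently large,
$$\frac{1}{\pi}(\log\psi_m)^2-(\log\psi_m)^{8/5+\delta} \;\ge\; m.$$
Consequently $\{\xi^*(\psi_m)\le m\}$ is contained in the deviation event of Proposition \ref{lower}, yielding
$$\p\bigl(\xi^*(\psi_m)\le m\bigr) \;\le\; C\exp\bigl(-\exp((\log\psi_m)^{3/5})\bigr).$$

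Finally, since $(\log\psi_m)^{3/5}\ge \pi^{3/10}m^{3/10}$, the inner exponential grows much faster than any linear function of $m$, so the right-hand side is bounded by $e^{-cm}$ for all sufficiently large $m$; the finitely many small values of $m$ are absorbed by shrinking $c=c(\delta)>0$. I do not anticipate any substantive obstacle here: the only delicate point is matching the constant in front of $m^{3/10+\delta/2}$ inside $\psi_m$ to the exponent $8/5+\delta$ in Proposition \ref{lower}, which the direct expansion above handles.
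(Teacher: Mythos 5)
Your proof is correct and follows exactly the same route as the paper: reduce $\{T^k_m>\psi_m\}\cap M^k_m$ to $\{\xi^*(\psi_m)\le m\}$ via $M^k_m=\{T^k_m<T^1_{m+1}\}$, check by expansion that $m\le\frac{1}{\pi}(\log\psi_m)^2-(\log\psi_m)^{8/5+\delta}$, and invoke Proposition \ref{lower}. Your expansion is actually slightly more explicit than the paper's one-line ``we can readily check'', but there is no substantive difference.
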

\begin{proof}
We can readily check that 
\[m=\frac{1}{\pi}(\log \psi_m)^2-2(\log \psi_m)^{8/5+\delta}+O\big[(\log \psi_m)^{6/5+2\delta}\big].\]
Thus by Proposition \ref{lower}, for $m$ large,
\begin{align*}
		&\quad\,\p\left(T^k_m>\psi_m,M^k_m\right)\le \p\big(\xi^*({\psi_m})\le m\big)\\
  &\le\p\left(\xi^*({\psi_m})<\frac{1}{\pi}(\log \psi_m)^2-(\log \psi_m)^{8/5+\delta}\right)\\
  &<Ce^{-\exp\big((\log \psi_m)^{3/5}\big)}<e^{-cm}
	\end{align*}
for some $c>0$.
\end{proof}

\subsection{Two toy models}\label{tysec}

In this subsection, we give two versions of the urn model which serve as toy models for the analysis of the local time of random walk.  In the text below, the readers should think of balls as sites, labels of urns as values of local times and ``ball $x$ being placed in $m$-th urn'' as ``site $x$ having local time $m$''. See Propositions \ref{bigalpha} and \ref{smallalpha} for actual statements in terms of local time.

Consider infinitely many urns labeled by integers \( 1, 2, 3, \ldots \), and \( n \) distinguishable balls labeled from \( 1 \) to \( n \).
Each ball is independently placed into $k$-th urn with probability $p_k>0$.
Let \( F_k \) denote the number of balls in the \( k \)-th box, and write \( X_n := \max\{k : F_k > 0\} \), for the urn with largest label which contains at least one ball.

{\begin{lem}\label{ty2}
If for any $m\geq 1$,
\begin{align*}
p_{m} \lesssim  p_{m+1},
\end{align*}
then there exists $c>0$ such that for any $m,h\ge 1$,
\begin{align*}
\p\big(F_{m}=h,\, X_n=m\big) \le \exp (-c h).
\end{align*}
\end{lem}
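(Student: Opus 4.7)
My approach is to reduce the event $\{F_m = h,\, X_n = m\}$ to a much more tractable event by discarding the constraint on urns far above $m$. Since $X_n = m$ forces every urn with label strictly greater than $m$ to be empty, in particular $F_{m+1} = 0$, we have the containment
\[
\{F_m = h,\, X_n = m\} \,\subseteq\, \{F_m = h,\, F_{m+1} = 0\}.
\]
This one-line observation is already tight enough to yield the desired exponential rate, because the hypothesis $p_m \lesssim p_{m+1}$ makes $\{F_{m+1} = 0\}$ costly relative to $\{F_m = h\}$.

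Next I would compute the probability on the right explicitly. Each ball lands in urn $m$, urn $m+1$, or elsewhere with probabilities $p_m$, $p_{m+1}$, and $1 - p_m - p_{m+1}$, so a direct trinomial count yields
\[
\p\big(F_m = h,\, F_{m+1} = 0\big) \,=\, \binom{n}{h}\, p_m^{h}\, \big(1 - p_m - p_{m+1}\big)^{n-h}.
\]
Setting $s := p_m + p_{m+1}$, I would re-factor this as $\binom{n}{h}\, s^{h}\, (1-s)^{n-h} \cdot (p_m / s)^{h}$, in which the first grouping is a $\mathrm{Bin}(n,s)$ point mass at $h$ and hence bounded by $1$. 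The estimate then collapses to $(p_m / (p_m + p_{m+1}))^{h}$. Writing the hypothesis quantitatively as $p_m \leq C\, p_{m+1}$ for a universal constant $C$, one obtains $p_m/(p_m + p_{m+1}) \leq C/(C+1) < 1$, and therefore $\p(F_m = h,\, X_n = m) \leq e^{-ch}$ with $c = \log(1 + 1/C) > 0$.

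The point worth flagging is not really an obstacle but a structural feature: the bound uses only the ratio between the two adjacent urns $m$ and $m+1$ and is completely independent of the mass further to the right, of the total number of balls $n$, and of $h$ itself in the base of the exponential. This uniformity is precisely what allows a single constant $c$ to serve for all $m$ and $h$ as the statement requires, and it is also why no moderate-deviation or Poisson-approximation machinery is needed despite the lemma's generality.
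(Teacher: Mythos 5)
Your proof is correct and rests on the same key observation as the paper's: only the ratio $p_m/(p_m+p_{m+1})$ matters, and it is bounded away from $1$ by the hypothesis. The paper phrases this via conditioning, noting that given $X_n\le m+1$ and $F_m+F_{m+1}=h$, the count $F_m$ is $\mathrm{Bin}\big(h,\tfrac{p_m}{p_m+p_{m+1}}\big)$, and then bounds the point mass at $h$ by $\big(\tfrac{p_m}{p_m+p_{m+1}}\big)^h$; you instead compute the joint trinomial probability $\p(F_m=h,F_{m+1}=0)$ directly and pull out the binomial point mass $\binom{n}{h}s^h(1-s)^{n-h}\le 1$ with $s=p_m+p_{m+1}$. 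These are two presentations of the same factorization, so the approaches are essentially identical; yours is arguably a bit more explicit about where the uniformity in $n$ comes from.
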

\begin{proof}
    Suppose $p_m\le Cp_{m+1}$ for all $m\ge 1$. It is not hard to see that conditionally on $X_n\le m+1$ and $F_m+F_{m+1}=h$, $F_m$ follows a binomial distribution with parameter $\big(h,\frac{p_m}{p_m+p_{m+1}}\big)$. Hence
    \begin{align*}
        \p\big(F_{m}=h,\, X_n=m\big)&\le \p\big\{F_m=h\,\big\vert\,X_n\le m+1,F_m+F_{m+1}=h\big\}\\
        &\le \left(\frac{p_m}{p_m+p_{m+1}}\right)^{h}\le \left(\frac{C}{1+C}\right)^{h}\le \exp (-c h)
    \end{align*}
    for some $c=c(C)>0$.
\end{proof}}


\begin{lem}\label{ty1}
For any $m>0$, pick two integers $0<g_m<f_m <m$ and some integer $J_m>0$. If uniformly for all $m>0$, $k\in ({m-g_m,m}]$ and $l\in ({m-f_m, m-g_m}]$,
\begin{align}\label{tp01}
p_k\lesssim p_l,
\end{align}
then, for any $j\in \N$,
\begin{align}\label{toy1}
\p\Big(\sum_{k= m-g_m+1}^{m} F_{k}=j+1\Big|X_n=m,\sum_{k= m-f_m+1}^{m} F_{k} \le J_m\Big)\lesssim \Big(\frac{g_m}{f_m}\Big)^{j}\cdot J_m^{j}.
\end{align}
\end{lem}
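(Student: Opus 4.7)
The plan is to reduce to an urn model on urns $\{1,\ldots,m\}$ and then perform an explicit conditional calculation. Writing $I_1:=\{m-g_m+1,\ldots,m\}$, $I_2:=\{m-f_m+1,\ldots,m-g_m\}$, $A_1:=\sum_{k\in I_1}F_k$, $A_2:=\sum_{k\in I_1\cup I_2}F_k$, and $p(I_\ell):=\sum_{k\in I_\ell}p_k$, I first observe that $\{X_n=m\}=\{F_m\ge 1\}\cap\{F_k=0\text{ for all }k>m\}$; the second event depends only on balls landing in urns $>m$ and is thus independent of $A_1,A_2,F_m$, so its probability cancels in the conditional and it suffices to bound $\p(A_1=j+1\mid F_m\ge 1,\,A_2\le J_m)$. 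Assumption \eqref{tp01} gives $p(I_1)/p(I_2)\le Cg_m/(f_m-g_m)$. Since $A_1\le A_2\le J_m$, the right-hand side of \eqref{toy1} is bounded from below by a positive constant whenever $g_m$ is comparable to $f_m$ and $j$ is correspondingly bounded, so the bound is trivial in that regime; I therefore assume $g_m\le f_m/2$, whence $q:=p(I_1)/(p(I_1)+p(I_2))$ satisfies $q/(1-q)\le 2Cg_m/f_m$.

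Conditionally on $A_2=h$, the $h$ balls in $I_1\cup I_2$ are i.i.d.\ with probabilities $p_k/(p(I_1)+p(I_2))$, so $A_1\mid A_2=h\sim\mathrm{Bin}(h,q)$ and $F_m\mid A_1=k\sim\mathrm{Bin}(k,r_1)$ with $r_1:=p_m/p(I_1)$. Expanding $(1-qr_1)^h=((1-q)+q(1-r_1))^h$ by the binomial theorem yields
\[
\p(F_m\ge 1\mid A_2=h)=\e\bigl[\,1-(1-r_1)^{A_1}\,\big|\,A_2=h\,\bigr]\ge hq(1-q)^{h-1}r_1,
\]
where the inequality retains only the $k=1$ term of the expansion. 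Combined with $\p(F_m\ge 1\mid A_1=j+1)=1-(1-r_1)^{j+1}\le (j+1)r_1$ for the numerator, the factor $r_1$ cancels and
\[
\p(A_1=j+1\mid F_m\ge 1,\,A_2=h)\le\frac{\binom{h}{j+1}q^{j+1}(1-q)^{h-j-1}(j+1)r_1}{hq(1-q)^{h-1}r_1}=\binom{h-1}{j}\left(\frac{q}{1-q}\right)^j.
\]

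Since $A_1\le A_2\le J_m$ forces $h\le J_m$, one has $\binom{h-1}{j}\le J_m^j/j!$, and together with $q/(1-q)\le 2Cg_m/f_m$ this gives
\[
\p(A_1=j+1\mid F_m\ge 1,\,A_2=h)\le\frac{(2C)^j}{j!}\left(\frac{g_m J_m}{f_m}\right)^j\le C'\left(\frac{g_m}{f_m}\right)^j J_m^j
\]
with $C':=\sup_{j\ge 0}(2C)^j/j!<\infty$. Averaging over $h$ (the ratio of sums of nonnegative terms is at most the maximum of the term-wise ratios) transfers the same bound to $\p(A_1=j+1\mid F_m\ge 1,\,A_2\le J_m)$, completing the proof. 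The main obstacle is extracting from the conditioning on $\{F_m\ge 1\}$ a bound that does not cost a spurious factor of order $h$ or $1/q$; the clean cancellation of $r_1$ above, arising from matching $(j+1)r_1$ in the numerator with the $k=1$ term of the lower bound in the denominator, is what makes the argument succeed.
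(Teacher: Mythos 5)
Your proof is correct and follows the same multinomial-conditioning route as the paper, with the welcome additional care of tracking the effect of conditioning on $\{F_m\ge 1\}$ (which the paper's one-line display leaves implicit — the cancellation of $r_1$ you isolate is exactly why no extra factor of order $h$ or $1/q$ appears) and of handling the degenerate regime where $g_m$ is comparable to $f_m$. One small imprecision: $\{F_k=0\ \forall k>m\}$ is \emph{not} unconditionally independent of $(A_1,A_2,F_m)$ as you assert (they are only conditionally independent given $A_2=h$), but this is harmless since your term-by-term bound on $\p(A_1=j+1\mid F_m\ge 1,\,A_2=h)$ is then averaged over $h$ against nonnegative weights, and the extra factor $\p(F_k=0\ \forall k>m\mid A_2=h)$ simply rides along in those weights without affecting the bound.
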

\begin{proof}
By (\ref{tp01}),
\begin{eqnarray*}
\max_{{m-g_m+1\le i\le m}}\{p_{i}\}\lesssim \min_{{m-f_m+1\le i\le m-g_m}}\{p_{i}\}
\Longrightarrow\dfrac{p_{m}+...+p_{m-{g_m}+1}}{p_{m}+...+p_{{m}-{f_m}+1}}\lesssim\dfrac{g_m}{f_m}.
\end{eqnarray*}
For any $l\in[j+1,J_m]$,
\begin{eqnarray*}
&&\p\Big(\sum_{k= m-g_m+1}^{m} F_{k}=j+1\Big|X_n=m,\sum_{k= m-f_m+1}^{m} F_{k} = l\Big)\\
&\lesssim&\frac{l!}{(l-j)!j!}\Big(\dfrac{p_{m}+...+p_{m-{g_m}+1}}{p_{m}+...+p_{{m}-{f_m}+1}}\Big)^j\cdot
\Big(\dfrac{p_{m-g_m}+...+p_{m-{f_m}+1}}{p_{m}+...+p_{{m}-{f_m}+1}}\Big)^{l-j}\\
&\lesssim& \frac{J_m!}{(J_m-j)!j!}\cdot\Big(\frac{g_m}{f_m}\Big)^{j}\lesssim \Big(\frac{g_m}{f_m}\Big)^{j}\cdot J_m^{j}.
\end{eqnarray*}
This implies \eqref{toy1}.
\end{proof}

\section{Favorite sites for $d\ge3$}\label{se:3}
In this section, we prove Theorem \ref{mainthm-01}. We first define various events which facilitate the analysis of favorite sites for $d\geq 3$ and then prove some key properties. We then proceed to the proof of Theorem \ref{mainthm-01}. We tacitly assume $d\ge 3$ throughout this section.
\subsection{Characterization of the event $M_m^k$}
In this subsection, we rewrite the event $M_m^k$ (recall \eqref{Mmk} for the definition), i.e., the occurrence of $k$-favorite sites of local time $m$ as the intersection of events that are easier to analyze. Recall the definition of $T_m^k$ and $L^k_m$ in and below \eqref{tmkdef}.
For $m,k\in \mathbb{Z}^+$, let
\begin{align*}
A_m^{k}
:=&\Big\{S_n \not\in \{L_m^1, \ldots L_m^{k-2} \}, \text{ for any }n \in  [T_m^{k-1}+m/2, {T_m^{k}\wedge T_{m+1}^1}]\Big\} \\
&\qquad \qquad \bigcap \Big\{S_n \neq L_m^{k-1} , \text{ for any }n \in  (T_m^{k-1}, {T_m^{k}\wedge T_{m+1}^1}]\Big\};
\end{align*}
stand for the event that the walk $(S_n)$ does not hit the sites $\{L_m^1, \ldots L_m^{k-2} \}$ between $T_m^{k-1}+\frac{m}{2}$ and $T_m^{k}\wedge T^1_{m+1}$ and also does not hit $L_m^{k-1}$ between $T_m^{k-1}+1$ and $T_m^{k}\wedge T^1_{m+1}$. We also define
\begin{align*}
& \widetilde{A}_m^{k}
:=\big\{S_n \not\in \{L_m^1, \ldots L_m^{k-2} \}, \text{ for any }n \in  \big[ T_m^{k-1}, {(T_m^{k-1}+m/2)\wedge T^k_m\wedge T_{m+1}^1}\big) \big\}
\end{align*}
and 
$$
B_m^{k}:=A_m^{1}\cap\ldots \cap A_m^{k};~~~\widetilde{B}_m^{k}:=\widetilde{A}_m^{1}\cap\ldots \cap \widetilde{A}_m^{k}.
$$
In fact, it is not difficult to see that
\begin{align}\label{characterization}
    M_m^k=B_m^{k}\cap\widetilde{B}_m^{k}.
\end{align}
{Moreover, we have the following result.
\begin{lem}\label{contain}
    Almost surely, there exists $M=M(\omega)\in \z^+$ such that for all $m>M$ and $k\ge 1$, $1_{M^k_m}(\omega)=1_{B^k_m}(\omega)$. Consequently, for any $k\ge 1$, 
    \[\big\{M^k_m\text{ infinitely often in }m\big\}\overset{\text{a.s.}}{=}\big\{B^k_m\text{ infinitely often in  }m\big\}.\]
\end{lem}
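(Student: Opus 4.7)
\emph{Plan for Lemma \ref{contain}.} By \eqref{characterization} one has $M_m^k \subset B_m^k$ deterministically, so only the reverse inclusion, eventually in $m$ and uniformly in $k$, needs to be shown. Since $B_m^k$ is decreasing in $k$, the family of ``bad'' differences collapses:
$$
\bigcup_{k \geq 1}\bigl(B_m^k\setminus M_m^k\bigr) \;=\; \bigcup_{j \geq 2}\bigl(B_m^j \cap (\widetilde{A}_m^j)^c\bigr) \;\subset\; D_m,
$$
where $D_m$ is the event that for some $j \geq 2$ one has $T_m^{j-1} < T_{m+1}^1$ and the walk visits one of $L_m^1, \ldots, L_m^{j-2}$ during $[T_m^{j-1}, T_m^{j-1}+m/2)$. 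The plan is to show $\sum_m \p(D_m) < \infty$ and then conclude via the first Borel--Cantelli lemma; the ``consequently'' statement of the lemma is then immediate.

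To estimate $\p(D_m)$, I would first note that $(\widetilde{A}_m^j)^c$ forces $T_{m+1}^1$ to fall in a window of length $m/2$ after $T_m^{j-1}$, and that the site visited at time $T_{m+1}^1$ must be one of the earlier favorite sites. Two ingredients then control each term: first, the strong Markov property at $T_m^{j-1}$ reduces the event to a fresh walk from $L_m^{j-1}$ hitting $\{L_m^1,\ldots,L_m^{j-2}\}$ within $m/2$ steps, which by Lemma \ref{Green} has conditional probability $\lesssim j\,m^{1-d/2}$ given $\mathcal{F}_m^{j-1}$; and second, the Csáki-type bound \eqref{as2} on the rarity of pairs of $m$-thick points --- the very input underlying Lemma \ref{lem0} --- delivers both (a) a polynomial spatial separation $|L_m^i - L_m^{j-1}| \gtrsim m^\eta$ (sharpening each hitting term via the transient Green's function) and (b) a rapidly decaying tail for $N_m := \max\{k : T_m^k < T_{m+1}^1\}$, which permits truncating the union over $j$. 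Combining these, I expect a bound of the form $\p(D_m) \lesssim m^{-\delta}$ for some $\delta > 0$, which is summable.

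The main obstacle is to avoid circularity: one cannot invoke the conclusion of Theorem \ref{mainthm-01} --- which Lemma \ref{contain} is designed to feed into --- in order to tame $N_m$. This is why everything has to be bootstrapped from the independent estimate \eqref{as2} of \cite{Csaki}, at the cost of a weaker-than-optimal but still polynomial moment control on $N_m$, which is more than adequate for the summability. A secondary technical point is the careful matching of the spatial-separation factor and the $N_m$-truncation so that the double sum (over $j$ and over the possible hitting sites) produces a single polynomial decay in $m$, without any logarithmic divergence that would spoil Borel--Cantelli.
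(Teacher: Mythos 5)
Your reduction $\bigcup_{k\ge1}(B_m^k\setminus M_m^k)\subset D_m$ is correct, and you have correctly identified the Csáki bound \eqref{as2} as the essential input. But the plan diverges from the paper and, as sketched, contains a concrete error. The paper does \emph{not} run a new Borel--Cantelli estimate inside Lemma~\ref{contain}: it first establishes (Lemma~\ref{lem0}) that $D_n^\epsilon\cap E_n^\epsilon$ occurs for all $n>N_0(\omega)$ almost surely, and then argues \emph{deterministically} on that eventual event (Lemma~\ref{contain1}): if $M_m^k$ holds but $\widetilde A_m^{k+1}$ fails, then the walk hits $L_m^k$ at time $T_m^k$ and some $L_m^{k'}$ before $T_m^k+m/2$, producing a pair of $(1-\epsilon)$-thick points whose hitting times are within $m/2\le\alpha\log n'$, contradicting $E^\epsilon_{n'}$ (i.e.\ $\mathsf P_{n'}=0$). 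No further probability estimate is needed once Lemma~\ref{lem0} is in hand.

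Your plan, by contrast, invokes Lemma~\ref{Green} incorrectly. Lemma~\ref{Green} bounds $\p\bigl(\h_{\{x_1,\ldots,x_k\}}(n)<\infty\bigr)\lesssim k\,n^{1-d/2}$, i.e.\ the probability of hitting the set \emph{at some time $\ge n$}, not the probability of hitting it \emph{within the first $n$ steps}. The latter is of order one when some $L_m^i$ lies near $L_m^{j-1}$, so the claimed bound ``$\lesssim j\,m^{1-d/2}$ by Lemma~\ref{Green}'' fails. You try to patch this with a ``polynomial spatial separation $|L_m^i-L_m^{j-1}|\gtrsim m^\eta$'' extracted from \eqref{as2}, but the Csáki bound and the quantity $\mathsf P_n$ built on it control pairs of thick points whose \emph{hitting times} differ by at most $\alpha\log n$, irrespective of Euclidean distance; a spatial-separation statement is not a direct consequence. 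Salvaging your route would in effect require re-deriving the content of Lemma~\ref{lem0} per $m$ (union bound over starting times and target sites, the $(1-\epsilon)^2$-splitting, the $\rho$-gridding), at which point you are doing the paper's argument in a less transparent coordinate. The auxiliary tail bound on $N_m$ that your truncation needs is also not free: it is itself one of the consequences one is ultimately trying to establish, and bootstrapping it from \eqref{as2} alone again amounts to rerunning Lemma~\ref{lem0}. So the missing idea is precisely the paper's two-step structure: a single almost-sure eventual event (Lemma~\ref{lem0}), then a purely deterministic contradiction argument on that event.
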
}


We postpone the proof till the end of this subsection and turn to the maximal local time.
Recall the statement of Theorem \ref{mainthm-01} for the definition of $\gamma_d$, and write
\begin{align}\label{alpha}
\alpha:=\alpha(d)=-\dfrac{1}{\log (1-\gamma_d)}.
\end{align}
For $\epsilon\in (0,1)$, we denote by
\begin{align*}
D_n^\epsilon:=\left\{\left(1-\frac{\epsilon}{2}\right)\alpha\log n \le \xi^*(n) \le \left(1+\frac{\epsilon}{2}\right)\alpha\log n\right\}
\end{align*}
the event that at time $n$, $\xi^*(n)$ is close to its expected value,
and by
$$
E_n^\epsilon:=\{\mathsf{P}_n=0\}, 
$$
where
\begin{equation}\label{Pmmdef}
\mathsf{P}_n:= \sum_{i=1}^n \sum_{j=i+1}^{(i+\alpha\log n)\wedge n}
{1}_{\{\min(\xi(S_i,n), \xi(S_j,n))\ge (1-\epsilon)\alpha\log n,S_i, S_j \not\in S_{(i,j)}, S_i \neq S_j \}}
\end{equation}
i.e., the event that there are no pairs of thick points ``within a short distance of each other'' before time $n$. 
\begin{lem}\label{lem0}
Let $\delta>0$ be defined as \eqref{deltadef} below {and fix $\rho>0$ such that $\rho\delta/2>1+\delta$.} Then for any $\epsilon\in (0,1)$ with $(1+\delta)(1-\epsilon)^2>1+\delta/2$, there exists $C=C(\epsilon)>0$ such that 
\begin{align}\label{infsup}
\p\Big( \bigcup_{n\geq N}(D_n^\epsilon \cap E_n^\epsilon)^{\rm c} \Big)\le C {N^{-\delta/\rho}}.
\end{align}
Consequently,
\[\p\big(D^\epsilon_n\cap E^\epsilon_n\text{ occurs eventually}\big)=1.\]
\end{lem}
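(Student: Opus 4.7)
My strategy is to split the bad event $(D_n^\epsilon\cap E_n^\epsilon)^{\rm c}$ into its two components, establish polynomial-in-$n$ upper bounds on each via moment estimates, and then union-bound along the polynomial subsequence $n_k=\lfloor k^\rho\rfloor$, interpolating for $n\in[n_k,n_{k+1}]$ via the monotonicity of $\xi^*$ and the fact that $\log n_{k+1}/\log n_k\to 1$. The resulting tail sum is exactly calibrated by the hypothesis $\rho\delta/2>1+\delta$ to yield the rate $N^{-\delta/\rho}$; the eventual occurrence then follows from Borel--Cantelli.

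For the large-deviation event $(D_n^\epsilon)^{\rm c}$, I would handle the upper tail by Markov's inequality applied to $\#\{x\in S_{[0,n]}:\xi(x,n)\ge k\}$, combining $\p(\xi(x,n)\ge k)\le (1-\gamma_d)^{k-1}$ with $|S_{[0,n]}|\le n+1$ to get $\p\{\xi^*(n)>(1+\epsilon/2)\alpha\log n\}\lesssim n^{-\epsilon/2}$; the lower tail decays super-polynomially by the classical Erd\H{o}s--Taylor second-moment argument on well-separated candidate sites. For $(E_n^\epsilon)^{\rm c}=\{\mathsf{P}_n\ge 1\}$ I would apply Markov together with the Cs\'aki-type two-point estimate \eqref{as2}: for each of the $O(n\log n)$ time pairs $(i,j)$ with $j-i\le\alpha\log n$, the joint event appearing in the summand of $\mathsf{P}_n$ has probability of order $(1-\gamma_d)^{2(1-\epsilon)\alpha\log n}=n^{-2(1-\epsilon)}$ after paying the escape cost for the short excursion $S_{(i,j)}$, giving $\e[\mathsf{P}_n]\lesssim n^{2\epsilon-1}\log n$. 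The inequality $(1+\delta)(1-\epsilon)^2>1+\delta/2$ is exactly what guarantees that, after the Cs\'aki refinement, the relevant exponent exceeds $\delta/2$.

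The interpolation step is short: by monotonicity, $D_{n_k}^{\epsilon'}\cap D_{n_{k+1}}^{\epsilon'}$ for an $\epsilon'$ slightly smaller than $\epsilon$ implies $D_n^\epsilon$ for every $n\in[n_k,n_{k+1}]$ once $k$ is so large that $\log n_{k+1}/\log n_k\le (1+\epsilon/2)/(1+\epsilon'/2)$, and a parallel comparison controls $E_n^\epsilon$ after replacing the threshold and summation range in $\mathsf{P}_n$ by their values at $n_{k+1}$. Summing the resulting subsequence bound, $\sum_{k\ge N^{1/\rho}} n_k^{-\delta/2}\asymp N^{1/\rho-\delta/2}\le N^{-\delta/\rho}$ (which is exactly the content of $\rho\delta/2\ge 1+\delta$), closes the argument. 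The main technical obstacle I anticipate is the bookkeeping in the estimate of $\e[\mathsf{P}_n]$: one must disentangle the contributions to $\xi(S_i,n)$ and $\xi(S_j,n)$ from the three time intervals $[0,i)$, $(i,j)$, $(j,n]$, applying the strong Markov property three times and using Lemma \ref{Green} together with \eqref{as2} to avoid double-counting on the non-escape event.
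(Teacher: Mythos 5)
Your overall strategy—splitting $(D_n^\epsilon\cap E_n^\epsilon)^{\rm c}$, bounding each piece polynomially, subsampling along $k^\rho$ and closing via Borel--Cantelli, with the two hypotheses playing the roles you describe—is essentially the paper's route, and your final arithmetic $\sum_{k\ge N^{1/\rho}} k^{-\rho\delta/2}\asymp N^{1/\rho-\delta/2}\le N^{-\delta/\rho}$ correctly uses $\rho\delta/2\ge 1+\delta$.

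The concrete problem is the stated bound ``$(1-\gamma_d)^{2(1-\epsilon)\alpha\log n}=n^{-2(1-\epsilon)}$'' for the joint two-thick-point event. This is the naive independence heuristic and it is \emph{not an available upper bound}: when $j-i\le\alpha\log n$ the events $\{\xi(S_i,n)\ge(1-\epsilon)\alpha\log n\}$ and $\{\xi(S_j,n)\ge(1-\epsilon)\alpha\log n\}$ are positively correlated, so the joint probability exceeds the product. Cs\'aki et al.'s formula \eqref{cfrrs} is an exact expression for this joint quantity, and \eqref{as2} distills it to $\p(\xi(0,\infty)+\xi(y,\infty)>2\alpha a\log n)\le n^{-a(1+\delta)}$ where the gain over a single thick point is only the factor $(1+\delta)$ with $\delta\in(0,1)$, not the factor $2$. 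After decomposing the local times at $S_i$, $S_j$ into contributions from $[0,i]$ and $[j,n]$—which the paper does cleanly by time reversal, producing two independent walks $\hat S_l:=S_{i-l}-S_i$ and $\widetilde S_l:=S_{j+l}-S_j$ and avoiding any explicit ``escape cost'' or iterated strong Markov accounting—and applying \eqref{as2} to each piece with an $\epsilon$-splitting of the threshold, the exponent is $(1+\delta)(1-\epsilon)^2$, not $2(1-\epsilon)$. The hypothesis $(1+\delta)(1-\epsilon)^2>1+\delta/2$ is calibrated precisely to this \emph{weaker} exponent; your phrase ``after the Cs\'aki refinement'' lands on the right condition but inverts the direction—Cs\'aki's estimate does not sharpen an independence bound, it \emph{replaces} an unavailable one with the correct (larger) probability and that is where $\delta$ enters irreducibly. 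One small further point: in the interpolation the threshold in $\mathsf{P}_{n,\tilde n}$ must be evaluated at the \emph{lower} endpoint $n_k$ of each block (so that the counted set is enlarged), not at $n_{k+1}$ as your description suggests.
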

\begin{proof}[Proof of Lemma \ref{lem0}]
The bound on $\p((D_n^\epsilon)^{\rm c})$ is quite classical; see e.g. \cite[lines 4-7 in the proof of Theorem 13 and line 4 on p.\ 162]{ET60}. 
It then suffices to bound $(E_n^\epsilon)^{\rm c}$. 

By \cite[(4.1)]{Csaki}, with $t_y:=\p(\h_y<\infty)$, for any $u\in \N$,
\begin{align}\label{cfrrs}
    \p\big(\xi(0,\infty)+\xi(y,\infty)>u\big)=\big(1-\gamma_d/(1+t_y)\big)^u.
\end{align}
By \cite[Lemma 17.13]{Re13}, we know for any $y\in \mathbb{Z}^d\setminus \{0\}$, $t_y^2<t_y\le 1-\gamma_d$,
which implies
\begin{align}\label{cfrrs2}
    -2\log(1-\gamma_d/(1+t_y))>-\log(1-\gamma_d)=\alpha^{-1}.
\end{align}
Then it is not hard to deduce from \eqref{cfrrs} and \eqref{cfrrs2} that with
\begin{align}\label{deltadef}
    \delta:=\inf_{y\in \z^d\setminus\{0\}}\frac{-2\log(1-\gamma_d/(1+t_y))}{\alpha^{-1}}-1>0,
\end{align}
for any $a>0$ and $n\in \mathbb{N}$, 
\begin{align}\label{as2}
\sup_{y\in\z^d\setminus\{0\}}\p\Big(\xi(0,\infty)+\xi(y,\infty) > 2 \alpha a \log n\Big)
\le n^{-a(1+\delta)}.
\end{align}
Recall the constant $\rho$ in the statement of the lemma and the definition of $\mathsf{P}_n$ in \eqref{Pmmdef}. We now define a variant: for $n,\widetilde{n}>0$ let
\begin{eqnarray*}
&&\mathsf{P}_{n,\widetilde{n}}:= \sum_{i=1}^n \sum_{j=i+1}^{(i+\alpha\log n)\wedge n}
{1}_{\left\{S_i, S_j \not\in S_{(i,j},S_i \neq S_j, \min(\xi(S_i,n), \xi(S_j,n))\ge (1-\epsilon)\alpha\log \widetilde{n} \right\}}.
\end{eqnarray*}
Note that for any $N\in [n^\rho, (n+1)^\rho) $,
$ \mathsf{P}_N \le \mathsf{P}_{(n+1)^\rho,n^\rho}$. Hence
\[{\bigcup_{n\ge N^\rho}(E^\epsilon_n)^{\rm c}\subset \bigcup_{n\ge N}\big\{\mathsf{P}_{(n+1)^\rho,n^\rho}>0\big\}}\]
and it suffices to prove
\begin{align}\label{no two near favorites}
\p\Big(\mathsf{P}_{(n+1)^\rho,n^\rho}>0\Big) \le Cn^{-(1+\delta)}.
\end{align}

Writing $\hat{S}_l:=S_{i-l}-S_i$ and $\widetilde{S}_l:=S_{j+l}-S_j$, for any $1\le i\le {(n+1)^\rho}, i+1\le j\le {(i+\alpha\log (n+1)^\rho)\wedge (n+1)^\rho}$,
\begin{equation}\label{back}
\begin{split}
&\;\p\Big(S_i, S_j \not\in S_{(i,j)},S_i \neq S_j,\min(\xi(S_i,(n+1)^\rho),\xi(S_j,(n+1)^\rho))\ge (1-\epsilon)\alpha\log n^\rho\Big)\\
\le \;& \sum_{z \in \mathbb{Z}^d} \p\Big(S_j-S_i =z,
\sum_{l=0}^i {1}_{\{\hat{S}_l\in \{0,z \} \}} +\sum_{l=0}^{(n+1)^\rho-j} {1}_{\{\widetilde{S}_l\in \{0,-z\} \}}\ge 2(1-\epsilon)\alpha\log n^\rho\Big).
\end{split}
\end{equation}
Observing the simple fact that for non-negative random variables $X,Y$,
\[\big\{X+Y\ge q\big\}\subset \bigcup_{r=0}^{[1/\epsilon]}\big\{X\ge r\epsilon q,\, Y\ge [1-(r+1)\epsilon]q\big\},\]
one has,
\begin{equation*}
\begin{split}
& \mbox{RHS of }\eqref{back}
\\
\le \;\,& \sum_{z \in \mathbb{Z}^d} \p(S_j-S_i =z)
\cdot\sum_{r=0}^{[1/\epsilon]}\p\Big(\sum_{l=0}^i {1}_{\{\hat{S}_l\in \{0,z \} \}}\ge 2r\epsilon(1-\epsilon)\alpha\log n^\rho\Big)\\
&\qquad\qquad\cdot\p\Big(\sum_{l=0}^{(n+1)^\rho-j} {1}_{\{\widetilde{S}_l\in \{0,-z\} \}}\ge 2[1-(r+1)\epsilon](1-\epsilon)\alpha\log n^\rho\Big)\\
\le\;\,&([1/\epsilon]+1){\sup_{r=0,\ldots,[1/\epsilon]]}}\Big\{\sup_{y\in \z^d\setminus\{0\}}\p\Big(\xi(0,\infty)+\xi(y,\infty)\ge 2r\epsilon(1-\epsilon)\alpha\log n^\rho\Big)\\
&\qquad\qquad\cdot\sup_{y\in \z^d\setminus\{0\}}\p\Big(\xi(0,\infty)+\xi(y,\infty)\ge 2[1-(r+1)\epsilon](1-\epsilon)\alpha\log n^\rho\Big)\Big\}\\
\overset{\eqref{as2}}{\le}\,& Cn^{-\rho(1+\delta)(1-\epsilon)^2}\le  Cn^{-\rho(1+\gre{\delta/2})}.
\end{split}
\end{equation*}
Summing over $i,j$ and using that $\rho\delta/2>1+\delta$ gives (\ref{no two near favorites}).
\end{proof}
Note that 
\begin{align*}
    \p\Big(\xi(0,\infty) > \alpha a \log n\Big)=(1-\gamma_d)^{\alpha a\log n}=n^{-a}.
\end{align*}
Compared with \eqref{as2}, we see that it is much harder to produce a pair of thick points than to produce one thick point. This is the main intuition behind the proof.

We now turn back to Lemma \ref{contain}.
{\begin{proof}[Proof of Lemma \ref{contain}]
We fix some $\epsilon$ satisfying the conditions in Lemma \ref{lem0}. By Lemma \ref{lem0}, we know that almost surely, there exists $N_0=N_0(\omega)\in\mathbb{Z}^+$ such that for any $n>N_0$, $D_n^\epsilon\cap E_n^\epsilon$ occurs. In particular, if $\xi^*(n) =m$, then
\begin{equation}\label{nlrelation}
   \exp\left(\frac{m}{(1+\epsilon/2)\alpha}\right) \le n \le \exp\left(\frac{m}{(1-\epsilon/2)\alpha}\right).
\end{equation}
\begin{lem}\label{contain1}
Fix $M'=M'(\epsilon)>0$ such that
\begin{align}\label{M_0}
    (1-\epsilon)\alpha \log \left\{\exp\left(\frac{m+1}{(1-\epsilon/2)\alpha}\right)+\frac{m}{2}\right\}\le m\quad\text{ for all }m\ge M',
\end{align}
and write $M=M(\varepsilon,\omega):=M'\vee 2\alpha \log N_0$.
    Then almost surely, for any $m>M$ and $k\ge 1$, if $M^k_m$ occurs, then $\widetilde{A}^{k+1}_m$ occurs.
\end{lem}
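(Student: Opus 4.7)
The plan is to argue by contradiction via the ``no close pair of thick points'' event $E^\epsilon_n$ from Lemma \ref{lem0}, which holds for all $n>N_0$. Suppose $M^k_m$ occurs but $\widetilde{A}^{k+1}_m$ fails. Then there exists $n^\ast\in\big[T_m^k,(T_m^k+m/2)\wedge T_m^{k+1}\wedge T_{m+1}^1\big)$ with $S_{n^\ast}=L_m^{j^\ast}$ for some $j^\ast\in\{1,\ldots,k-1\}$. Since the favorite sites $L_m^1,\ldots,L_m^k$ are distinct and $S_{T_m^k}=L_m^k$, one necessarily has $n^\ast>T_m^k$. The strategy is to exhibit $(i,j)=(T_m^k,n^\ast)$ as a valid contributing pair to $\mathsf{P}_{n^\ast}$, yielding the contradiction $\mathsf{P}_{n^\ast}>0$ while $E^\epsilon_{n^\ast}$ holds.

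Since $m>M\ge 2\alpha\log N_0$, the bound \eqref{nlrelation} applied at $n=T_m^k$ (where $\xi^\ast(T_m^k)=m$) gives $T_m^k\ge \exp(m/((1+\epsilon/2)\alpha))>N_0$, so $E^\epsilon_{n^\ast}$ holds by Lemma \ref{lem0}. The near-time constraint follows from
\[n^\ast-T_m^k<m/2<\tfrac{m}{1+\epsilon/2}\le \alpha\log T_m^k\le \alpha\log n^\ast,\]
and the distinctness of favorite sites gives $S_i=L_m^k\neq L_m^{j^\ast}=S_j$.

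For the non-backtracking condition $L_m^k,L_m^{j^\ast}\notin S_{(T_m^k,n^\ast)}$, I use that under $M^k_m$ one has $\xi(L_m^\ell,T_m^k)=m$ for all $\ell\le k$: any visit to $L_m^k$ or $L_m^{j^\ast}$ strictly between $T_m^k$ and $n^\ast$ would push that site's local time to $m+1$, forcing $T_{m+1}^1<n^\ast$, contradicting $n^\ast<T_{m+1}^1$. Finally, for the thick-point threshold, I use $\xi(L_m^k,n^\ast),\xi(L_m^{j^\ast},n^\ast)\ge m$ together with the upper bound
\[n^\ast<(T_m^k+m/2)\wedge T_{m+1}^1\le \exp\!\bigl((m+1)/((1-\epsilon/2)\alpha)\bigr)+m/2,\]
derived from $D^\epsilon_{T_m^k}$ and $D^\epsilon_{T_{m+1}^1}$ (both available since $T_m^k,T_{m+1}^1>N_0$). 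The defining condition of $M'$ then gives exactly $(1-\epsilon)\alpha\log n^\ast\le m$, completing the verification that $(T_m^k,n^\ast)$ contributes to $\mathsf{P}_{n^\ast}$.

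The main obstacle is calibrating all these inequalities so they close up. In particular, the somewhat delicate point is matching the thick-point cutoff $(1-\epsilon)\alpha\log n^\ast$ to the available local-time value $m$: the upper bound on $n^\ast$ has the slightly awkward additive form $\exp(\cdot)+m/2$ because the right endpoint of the interval defining $\widetilde{A}^{k+1}_m$ is a minimum of two a priori incomparable quantities, which is precisely why the defining condition on $M'$ is written in that particular form.
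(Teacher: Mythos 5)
Your proof is correct and follows the same line of reasoning as the paper's own proof: both rule out a revisit to some $L_m^{j^*}$ on the interval defining $\widetilde{A}_m^{k+1}$ by exhibiting it, together with $T_m^k$, as a contributing pair to $\mathsf{P}_{\cdot}$ and invoking $E^\epsilon_{\cdot}$ (available at all times $>N_0$ by Lemma \ref{lem0}). The only cosmetic difference is the reference time for $\mathsf{P}$: you use the revisit time $n^*$ directly, while the paper uses $n'=T_m^k+m/2\ge n^*$; the resulting inequality bookkeeping is essentially the same and all your checks go through.

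One remark, applicable equally to the paper's proof: the deterministic observation you make while verifying the non-backtracking condition already contains the full contradiction, making the probabilistic machinery superfluous for this particular lemma. On $M_m^k$ one has $\xi(L_m^{j^*},T_m^k)=m$ for every $j^*\le k$; the single visit $S_{n^*}=L_m^{j^*}$ with $n^*>T_m^k$ therefore forces $\xi(L_m^{j^*},n^*)\ge m+1$ and hence $T_{m+1}^1\le n^*$, which directly contradicts the constraint $n^*<T_{m+1}^1$ that is built into the right endpoint of the interval in the definition of $\widetilde{A}_m^{k+1}$. So, as stated, $M_m^k\Rightarrow\widetilde{A}_m^{k+1}$ holds deterministically for all $m,k$, with no need for $E^\epsilon$, $D^\epsilon$, or the restriction $m>M$. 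It would be worth double-checking whether the authors intended a version of $\widetilde{A}_m^{k+1}$ without the $\wedge\, T_{m+1}^1$ truncation, in which case the argument via $E^\epsilon$ would genuinely be required.
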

\begin{proof}
We proceed by contradiction. Assume that for some $m>M$ and $k\geq 1$, $M_m^{k}$ occurs but
$\widetilde{A}_m^{k+1}$ does not occur. In this case, we observe that by \eqref{nlrelation},
\begin{equation}\label{mnbound}
\exp\left(\frac{m}{2\alpha}\right)\leq T_m^1< T_m^{k}<T_{m+1}^1\leq \exp\left(\frac{m+1}{(1-\epsilon/2)\alpha}\right),
\end{equation}
and the random walk $(S_n)_{n\geq 0}$ must have hit $L_m^{k}$ at time $T_m^{k}$ and one of $L_m^1,\ldots,L_m^{k-1}$ (say $L_m^{k'}$) before $n':=T_m^{k}+m/2$.
Since
$$
n'>N_0,\quad\alpha \log n' \geq m/2,\quad\mbox{and}\quad m \geq (1-\epsilon)\alpha \log n'.
$$
by \eqref{mnbound} and \eqref{M_0}, we know that on the one hand, $E_{n'}^\epsilon$ occurs, and on the other hand, $L_m^{k}$ and $L_m^{k'}$ qualify as a pair of thick points in the calculation of $\mathsf{P}_{n'}$, which yields a contradiction. 
\end{proof}
Turn back to the proof of Lemma \ref{contain}. Thanks to \eqref{characterization}, it is plain that if $M^k_m$ occurs, then $B^k_m$ occurs. For the other direction, we claim that for any $\omega$ satisfying the conclusion in Lemma \ref{contain1}, $m>M$, and $k\ge 1$, if $B^k_m$ occurs, then $M^k_m$ occurs. 
Indeed, the event $M^1_m$ always occurs. Based on this and further using Lemma \ref{contain1} and the relation that
\[M^j_m=M^{j-1}_m\cap A^j_m\cap \widetilde{A}^j_m,\] 
we can recursively prove that $M^j_m$ occurs for $j=2,\ldots, k$. 
This completes the proof.
\end{proof}}

\subsection{Proof of Theorem \ref{mainthm-01}}

\begin{proof}[Proof of Theorem \ref{mainthm-01}]
By Lemma \ref{lem0}, it suffices to show that a.s.,
\begin{align*}
\limsup_{m\to \infty} \frac{\mathcal{N}_m}{\log m}=-\frac{1}{\log \gamma_d}(=:\beta),
\end{align*}
where $$\mathcal{N}_m:=\sup\{k:T_m^k<T_{m+1}^1\}.$$

We start with the upper bound. Let
$
U_m:=[(1+\epsilon) \beta \log m].
$
On the event
$$
G_m:= \bigcap_{n\geq \exp(m/2\alpha)}\left(D^\epsilon_n\ \cap E^\epsilon_n\right),
$$
if $\{\mathcal{N}_m > U_m\}=M^{U_m+1}_m$
occurs, it follows that
$$
T^1_{m+1}-T^j_m>T_{m}^{j+1}-T_m^j\ge m/2,
$$
and thus 
$$
S_{T_m^j}\notin S_{[T_m^j+1,T_m^j+m/2)}\quad \mbox{ for any }j=1,\ldots,U_m.
$$
By applying the strong Markov property at $T_m^j$, $j=1,\ldots,U_m$  successively,
we see that
\begin{eqnarray*}
 \p\Big( \{\mathcal{N}_m > U_m\} \cap G_m\Big)
&\le &\p\Big(S_0\notin S_{[1,m/2)}\Big)^{U_m}\\
&\le &\Big(\p\big(S_0\notin S_{[1,\infty)}\big)+\p\big(S_0\in S_{[m/2,\infty)}\big)\Big)^{U_m}\\
&\overset{(\ref{hitk})}{\le} & (\gamma_d+cm^{-d/2+1})^{U_m} \le  Cm^{-(1+\epsilon)}.
\end{eqnarray*}
Therefore,
\begin{eqnarray*}
\p\Big(\mathcal{N}_m > U_m\Big) \le  \p\Big( \{\mathcal{N}_m \ge U_m \} \cap G_m \Big)
+\p\Big(G_m^{\rm c} \Big) \overset{(\ref{infsup})}{\le} Cm^{-(1+\epsilon)}.
\end{eqnarray*}
The upper bound then follows from the Borel-Cantelli lemma.

We now turn to the lower bound. 
Note that for $k>1$,
\begin{equation}\label{amkcomplement}
A_m^k\supset \{\mathsf{H}_{\{L_m^1,\ldots, L_m^{k-1}\}}(T_m^{k-1}+m/2)=\infty \} \cup \{\mathsf{H}_{L_m^{k-1}}(T_m^{k-1})=\infty\}.
\end{equation}
Recall the definition of $\mathcal{F}_m^k$ in \eqref{sigma}. By the strong Markov property, the definition of $\gamma_d$ and (\ref{hitk}), \eqref{amkcomplement} implies that
$$
\p(A_m^k | \mathcal{F}_{m}^{k-1}) \geq \gamma_d- ck  m^{-d/2+1}.
$$
In addition, note that trivially $A_m^1$ holds.
Since for any $m,k\geq 1$, $A_m^k,B_m^k \in \mathcal{F}_{m}^k$, picking
$$K_m:=[(1-\epsilon)\beta \log m],$$
by tower property of conditional expectation applied to the filtration $(\mathcal{F}_m^k)_{k\geq 1}$,
$$
\p(B_m^{K_m} | \mathcal{F}_m^1) \geq \prod_{j=2}^{K_m} (\gamma_d-cj m^{-d/2+1})\geq   cm^{-1+\epsilon}.
$$
Hence,
\begin{align*}
\sum_{m=1}^{\infty}\p\Big(B_m^{K_m}\Big|\cal{F}_{m}^1\Big)\ge\sum_{m=1}^{\infty}cm^{-1+\epsilon}=\infty.
\end{align*}
Note that similar to \eqref{Mmkmeasure}, $B_m^k \in \mathcal{F}_{m+1}^1$. 
The generalized second Borel-Cantelli lemma (see e.g., \cite[Theorem 5.3.2]{Du19}) then implies that a.s., $B_m^{K_m}$ occurs infinitely often. By Lemma \ref{contain}, this implies a.s., $M_m^{K_m}$ also occurs i.o.. In other words,
$
\p(\mathcal{N}_m \ge K_m \;\text{ i.o.})=1.$
This finishes the proof of the lower bound.
\end{proof}

\section{Favorite sites for $d=2$}\label{se:4}

In this section, we prove Theorem \ref{mainthm2}. In Section \ref{se:2dlb}, we give the more straightforward lower bound. The proof of the upper bound is much more demanding and will be laid out in the subsequent subsections. 

\subsection{The lower bound in Theorem \ref{mainthm2}}\label{se:2dlb}
{Recall the representation \[M_m^k=U^2_m\cap\cdots\cap U^k_m\] in \eqref{representation}.
Then we have
\begin{lem}\label{stopping time estimate}
There exists $c>0$ such that for $i=2,3$,
\begin{align}\label{stopping time for}
\p\big(U^{i}_m\big| \mathcal{F}_m^{i-1} \big)\ge cm^{-1/2}.
\end{align}
\end{lem}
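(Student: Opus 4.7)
The plan is to apply the strong Markov property at $T^{i-1}_m$ and reduce the estimate to two ingredients already at hand in the paper: a hitting probability bound ensuring that the walk avoids the $\le 2$ existing favorite sites for a long time, and a lower deviation estimate ensuring that the maximal local time of a fresh random walk actually attains $m$ within the same time window. Set $A := \{L^1_m,\ldots,L^{i-1}_m\}$, so $|A| = i-1 \in \{1,2\}$, and fix the scale
\begin{align*}
    n = n(m) := \exp\bigl(\sqrt{\pi m} + \sqrt{\pi}\, m^{3/10+\delta}\bigr)
\end{align*}
for some small fixed $\delta\in(0,1/10)$. A direct calculation gives both $\log n \asymp \sqrt{m}$ (so $1/\log n \asymp m^{-1/2}$) and, for $m$ large, $\tfrac{1}{\pi}(\log n)^2 - (\log n)^{8/5+\delta} \ge m$, the latter bringing Proposition \ref{lower} into play.

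Given $\mathcal{F}^{i-1}_m$, at time $T^{i-1}_m$ the walk sits at $L^{i-1}_m \in A$, and its post-$T^{i-1}_m$ trajectory is, by the strong Markov property, a fresh simple random walk issued from $L^{i-1}_m$. The key observation is the inclusion
\begin{align*}
    \bigl\{S_{T^{i-1}_m + k}\notin A\text{ for all }1\le k\le n\bigr\} \cap \Bigl\{\max_{y\in\z^2}\bigl[\xi(y,T^{i-1}_m+n)-\xi(y,T^{i-1}_m)\bigr] \ge m\Bigr\} \subseteq U^i_m.
\end{align*}
Indeed, on the first event no site of $A$ is revisited during $(T^{i-1}_m,\,T^{i-1}_m+n]$, while on the second some site $y$ accumulates at least $m$ visits in that same window; since any such $y$ necessarily lies outside $A$, it reaches total local time $\ge m$ by time $T^{i-1}_m + n$, forcing $T^i_m \le T^{i-1}_m + n$ with no intermediate visit to $A$, which is exactly $U^i_m$.

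Combining this inclusion with the strong Markov property and translation invariance,
\begin{align*}
    \p\bigl(U^i_m \,\big|\, \mathcal{F}^{i-1}_m\bigr) \;\ge\; \p^{L^{i-1}_m}\bigl(\h_A > n\bigr) \;-\; \p\bigl(\xi^*(n) < m\bigr).
\end{align*}
For $i=2$ the first term reduces by translation to $\p^0(\h_0 > n) \asymp 1/\log n$ via \eqref{ET60 2.5}; for $i=3$ it becomes $\p^0(\h_{\{0,y\}} > n)$ with $y := L^1_m - L^2_m \neq 0$, which is $\gtrsim 1/\log n$ by \eqref{2d-lem for}, uniformly in $y$. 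Both cases thus yield a lower bound of order $m^{-1/2}$. For the second term, Proposition \ref{lower} together with the choice of $n$ gives $\p(\xi^*(n) < m) \le C\exp\bigl(-\exp(cm^{3/10})\bigr)$, which is negligible compared to $m^{-1/2}$, so the subtraction is harmless and the stated bound follows.

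The only genuinely delicate point will be the calibration of $n(m)$: one needs $n$ just large enough that a fresh walk's maximal local time in $n$ steps comfortably exceeds $m$ (to bring Proposition \ref{lower} to bear), yet small enough that $1/\log n$ remains of order $m^{-1/2}$, so that the avoidance cost is not over-paid. The displayed choice of $n$ meets both constraints comfortably, and every remaining step is a direct invocation of Lemma \ref{2dlem02} and Proposition \ref{lower}.
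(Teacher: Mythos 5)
Your proof is correct and rests on the same core ingredients as the paper's: the strong Markov property at $T^{i-1}_m$, a two-dimensional hitting-probability bound from Lemma \ref{2dlem02} to price the avoidance of the existing favorite site(s) at order $1/\log n \asymp m^{-1/2}$, and Proposition \ref{lower} to guarantee that a fresh walk reaches maximal local time $m$ within the time window at only a doubly-exponentially-small exceptional probability. The only difference is cosmetic: you fix a deterministic horizon $n(m)$ and intersect the avoidance event with the event $\{\xi^*(n)\ge m\}$, whereas the paper works with the random horizon $T^1_m\circ\theta_{T^{i-1}_m}$ of the restarted walk (so that success of the avoidance event alone already gives $U^i_m$) and then controls this random time via Lemma \ref{Tkm}, which is itself a consequence of Proposition \ref{lower} — the two accountings are equivalent.
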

\begin{proof}
By the observation that it is easier to create another site with local time $m$ than the first one, and the strong Markov property, we have
\begin{align}\label{u3m}
\begin{aligned}
    \p\left(U_m^3\big\vert\mathcal{F}^2_m\right)&\ge \p\big\{S_n\notin \{L_m^1,L_m^2\}\text{ for }n\in (T^2_m,T^2_m+T^1_m\circ \theta_{T^2_m}]\big\vert\mathcal{F}^2_m\big\}\\
    &=\p\big(\h_{\{0,y\}}>T^1_m\big)\big\vert_{y=S_{T^1_m}-S_{T^2_m}}.    
\end{aligned}
\end{align}
\red{Here $\theta_{T^2_m}$ denotes the time-shift operator, 
so that $T^1_m \circ \theta_{T^2_m}$ represents the first time, along the shifted path 
after $T^2_m$, that a site accumulates local time $m$.}
Note that for any $y\in \z^2$,
\begin{align*}
    \p\big(\h_{\{0,y\}}>T^1_m\big)&\ge \p\big\{\h_{\{0,y\}}>T^1_m,\, T^1_m\le e^{2m^{1/2}}\big\}\\
    &\ge \p\big\{\h_{\{0,y\}}>e^{2m^{1/2}}\big\}-\p\big(T^1_m>e^{2m^{1/2}}\big)\\
    &\overset{(\ref{2d-lem for})}{\underset{(\ref{Tkm1})}{\ge}}cm^{-1/2}
\end{align*}
for some universal $c>0$. Substituting back to \eqref{u3m} gives \eqref{stopping time for} for $i=3$. The case $i=2$ is proved in the same way.
\end{proof}
\begin{proof}[Proof of lower bound in Theorem \ref{mainthm2}]
By tower property of conditional expectation, we have
\begin{align*}
    \p\big(M^3_m\big\vert \mathcal{F}^1_m\big)&=\e\Big\{\p\big(U^3_m\big\vert \mathcal{F}^2_m\big)1_{U^2_m}\Big\vert\mathcal{F}^1_m\Big\}\\
    &\overset{\eqref{stopping time for}}{\ge}cm^{-1/2}\p\big(U^2_m\big\vert \mathcal{F}^1_m\big)\\
    &\overset{\eqref{stopping time for}}{\ge}cm^{-1/2}\cdot cm^{-1/2}=c^2m^{-1}.
\end{align*}
Again, by the generalized second Borel-Cantelli lemma (recall \eqref{Mmkmeasure} for measurability), we get that almost surely, $M_m^3$ occurs infinitely often, or equivalently $\limsup_{n \to \infty} {\#\mathcal{K}^{(2)}(n)}\geq 3$, as desired.
\end{proof}}

\subsection{Preliminaries for the upper bound in Theorem \ref{mainthm2}: decomposition of local time}\label{Preliminaries}
In this subsection, we elaborate on the idea of decomposition of local time, first introduced in Section \ref{se:2}, and pair the lattice according to parity and state some relatively straightforward observations.

Recall the definitions of $N_n$, $N'_n$, $\widetilde{S}$, $\widetilde{S}'$,  $\widetilde{\xi}$, $\widetilde{\xi}'$ and $p(i,j)$ in Section \ref{se:2}.
We define 
\begin{align}\label{Txl}
    \widetilde{T}(x,l):=\inf\{j\ge 0:\widetilde{\xi}(x,j)=l\},\quad\mbox{ and }\quad\widetilde{T}'(x,l):=\inf\{j\ge 0:\widetilde{\xi}'(x,j)=l\}.
\end{align}

 Write 
 \begin{equation}\label{eq:Xdef}
     \mathbf{X}:=\big\{(x,x+{\bf e}_1):x\in \z^2_{\rm e}\big\}
 \end{equation}
 for a ``pairing'' of $\mathbb{Z}^2$. The motivation for introducing such a pairing will be revealed in Section \ref{se:1.1ub} (where more pairings will be introduced too).
We use $\mathbf{x}$ to denote a generic element in $\mathbf{X}$ and for $x\in \z^2$, $\mathbf{x}(x)$ represents the unique element in $\mathbf{X}$ which contains $x$. For $\mathbf{x}=(x,x+{\bf e}_1)$ and $n\ge 1$, we write
\begin{align*}
    \xi(\mathbf{x},n)&:=\big(\xi(x,n),\xi(x+{\bf e}_1,n)\big),\\
    {\xi_{\rm L}(\x,n)}&{:=\big(\xi_{\rm L}(x,n),\xi_{\rm L}(x+{\bf e}_1,n)\big)},\quad\widetilde{\xi}(\mathbf{x},n):=\big(\widetilde{\xi}(x,n),\widetilde{\xi}(x+{\bf e}_1,n)\big).
\end{align*}
We also write
$$
\xi^+(\x,n):=\xi(x,n)\vee \xi(x+{\bf e}_1,n),\quad\mbox{ and }\quad\xi^-(\x,n):=\xi(x,n)\wedge \xi(x+{\bf e}_1,n),$$ 
and define $\widetilde{\xi}^\pm(\x,n)$ in a similar fashion.
Moreover, we also define $\xi'$, $\xi'_{\rm L}$, $\widetilde{\xi}'$, $(\xi')^\pm$, $(\widetilde{\xi}')^\pm$ for $(\x,n)\in \X\times \N$ through replacing $\xi$ and $\widetilde{\xi}$ in the above definitions by $\xi'$ and $\widetilde{\xi}'$ respectively. In the following, we occasionally identify $m\ge 1$ with the 2D vector $(m,m)$. For example, we will write $m-\widetilde{\xi}(\mathbf{x},n)$ instead of $(m-\widetilde{\xi}(x,n),m- \widetilde{\xi}(x+{\bf e}_1,n))$.

 Let
\begin{align*}
	N^{-1}_-(n):=\inf\left\{j\ge 0:N_j=n\right\},\quad N^{-1}_+(n):=\sup\left\{j\ge 0:N_j=n\right\},
\end{align*}
and define
\begin{align*}
    h_n:=\left(N^{-1}_+(n)-N^{-1}_-(n)\right)/2\text{ for even }n,
\end{align*}
which is the number of consecutive excursions $(\widetilde{S}_n,\widetilde{S}_n+{\bf e}_1,\widetilde{S}_n)$ in the path of $S$ right after the $N^{-1}_-(n)$-th step. We write 
\begin{align}\label{hxl}
    h(x,l)=h_n\text{ when }n=\widetilde{T}(x,l).
\end{align}
Moreover, we also define  $(N')^{-1}_\pm$, $h'_n$, and $h'(x,l)$ through replacing $N_j$ by $N'_j$ in the above definitions.

The following propositions are more sophisticated versions of \eqref{CLT*} whose proofs are still fairly straightforward. We only prove Proposition \ref{condilaw2}. {Observe that for $n\in \N$ and $\x=(x,x+{\bf e}_1)\in \X$, 
\begin{align}\label{iff}
    \xi_{\rm L}(x,n)\neq \xi_{\rm L}(x+{\bf e}_1,n)\text{ if and only if }n+1\in \mathcal{L}\text{ and }x=S_{n-1}.
\end{align}
Since $\mathcal{L}$ contains neither $N^{-1}_-(n)+1$ nor $N^{-1}_+(n)+1$ (otherwise $N_{N^{-1}_-(n)+1}\text{ or }N_{N^{-1}_+(n)+1}=n-1<n=N_{N^{-1}_\pm(n)}$), we have for any $n\in \N$ and $\x=(x,x+{\bf e}_1)\in \X$,
\[\xi_{\rm L}(x,N^{-1}_-(n))=\xi_{\rm L}(x+{\bf e}_1,N^{-1}_-(n))\qquad\mbox{and}\qquad \xi_{\rm L}(x,N^{-1}_+(n))=\xi_{\rm L}(x+{\bf e}_1,N^{-1}_+(n)).\]}
Also note that the $\sigma$-algebra generated by $\widetilde{S}_{[0,n]}$ contains the information on $(\widetilde{\xi}(x,n):x\in \z^2)$. 
\begin{pro}\label{condilaw1}
For any $n\ge 0$, conditionally on $\widetilde{S}_{[0,n]}$, $\left\{h_j:0\le j\le n\right\}$ are i.i.d. following a $\{0,1,\ldots\}$-valued geometric distribution with success probability $15/16$. Consequently,
$\big(\xi_{\rm L}(\mathbf{x},N^{-1}_{\pm}(n)):\mathbf{x}\in\mathbf{X}\big)$ are conditionally independent with the following distribution. 
\begin{itemize}
	\item {If $n$ is odd}, then for any $\x=(x,x+{\bf e}_1)$, $\xi_{\rm L}(\x,N^{-1}_-(n))=\xi_{\rm L}(\x,N^{-1}_+(n))$ and
		\[\p\left\{\xi_{\rm L}(\x,N^{-1}_-(n))=l\,\Big\vert\,\widetilde{S}_{[0,n]}\right\}=p(\widetilde{\xi}(x,n),l).\]
	\item {If $n$ is even}, then for $\x=(x,x+{\bf e}_1)\neq \x(\widetilde{S}_n)$, $\xi_{\rm L}(\mathbf{x},N^{-1}_{\pm}(n))$ has the same law as that in the previous case; while for $\x(\widetilde{S}_n)=(x,x+{\bf e}_1)$,
		\[\p\left\{\Big(\xi_{\rm L}(\x,N^{-1}_-(n)),\,\xi_{\rm L}(\x,N^{-1}_+(n))\Big)=(l_1,l_2)\,\Big\vert\,\widetilde{S}_{[0,n]}\right\}=p(\widetilde{\xi}(x,n)-1, l_1)p(1,l_2-l_1) .\]
\end{itemize}
\end{pro}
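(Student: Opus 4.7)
The plan is to exploit the i.i.d.\ structure of simple random walk by decomposing it into $2$-step blocks aligned to even real times. Each block $(S_{2k}, S_{2k+1}, S_{2k+2})$ is independent and uniformly distributed over the $16$ possible $2$-step paths from $S_{2k}$; exactly one, the wiggle $(+\mathbf{e}_1, -\mathbf{e}_1)$, is the ``lazy'' type belonging to $\mathcal{L}$. Thus the sequence of block types (lazy vs.\ non-lazy) is i.i.d.\ Bernoulli$(1/16)$, and conditional on each type the block content is uniform over the corresponding one or fifteen possibilities, independently across blocks. The jump chain $\widetilde{S}$ is then the concatenation of the non-lazy blocks, and the $h_j$ record the counts of lazy blocks interspersed between consecutive non-lazy ones.

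For the first claim, since lazy blocks are aligned to even real-time boundaries, no lazy block can ``pad'' a position of $\widetilde{S}$ at an odd jump time; consequently $N^{-1}_+(j) = N^{-1}_-(j)$ for odd $j$ and the nontrivial part of the assertion concerns even $j \in [0,n]$. Writing $\tau_k$ for the real block-index of the $k$-th non-lazy block (with $\tau_0 := 0$), one has $h_{2j} = \tau_{j+1} - \tau_j - 1$ for $0 \le j \le n/2$. Conditioning on $\widetilde{S}_{[0,n]}$ is equivalent to conditioning on the ordered contents of the first $n/2$ non-lazy blocks; since block types are independent of contents, the gap variables $(h_0, h_2, \ldots, h_n)$ remain i.i.d.\ Geometric$(15/16)$. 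The only subtle point is $h_n$ itself: $\tau_{n/2+1}$ has not been revealed by $\widetilde{S}_{[0,n]}$, but memorylessness of the geometric waiting time for the next non-lazy block ensures $h_n$ is still Geometric$(15/16)$ under the conditioning.

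For the distributional consequence, each lazy block arising at $\widetilde{S}_{2j}$ contributes one to $\xi_{\rm L}(\widetilde{S}_{2j}, \cdot)$, so one can write $\xi_{\rm L}(x, N^{-1}_\pm(n)) = \sum_j h_{2j}$ summed over those $j$ with $\widetilde{S}_{2j} = x$ and $2j$ in an appropriate range: $2j \le n$ for $N^{-1}_+(n)$, but only $2j < n$ for $N^{-1}_-(n)$ (the distinction matters only when $\widetilde{S}_n = x$). The disjointness of these index sets across distinct pairs $\mathbf{x}$ yields the conditional independence across $\mathbf{x}$. The stated formulas then follow directly: the already-noted identity $N^{-1}_\pm(n)+1 \notin \mathcal{L}$ lets one identify the two components of each pair with their common scalar value; and for $n$ even with $\mathbf{x} = \mathbf{x}(\widetilde{S}_n)$ the term $h_n$ is the sole discrepancy between the $N^{-1}_-(n)$ and $N^{-1}_+(n)$ totals, which, combined with the independence of $h_n$ from the earlier sum, produces the product $p(\widetilde{\xi}(x,n)-1, l_1)\, p(1, l_2 - l_1)$. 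The principal bookkeeping obstacle is to handle this exceptional pair carefully for even $n$; aside from this, the argument is a direct reading off of the conditional geometric structure established in the first step.
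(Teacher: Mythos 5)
Your proof is correct. The paper does not spell out a proof of Proposition~\ref{condilaw1} (calling it ``fairly straightforward'') and only proves the stopping-time analogue, Proposition~\ref{condilaw2}, by factorizing the path probability $(1/4)^{\text{length}}$ into a jump-chain contribution times a product of geometric weights for the lazy counts; your two-step-block decomposition makes that same factorization explicit, so the underlying argument coincides with the paper's. One minor clarification: the appeal to memorylessness for $h_n$ is unnecessary --- since the $\sigma$-algebra $\sigma(\widetilde{S}_{[0,n]})$ is generated by the \emph{contents} of the revealed non-lazy blocks, which are independent of the i.i.d.\ block-type sequence, every gap variable $h_0, h_2,\ldots, h_{2\lfloor n/2\rfloor}$ (not just $h_n$) is unaffected by the conditioning for exactly the same reason, with no distinction to draw between them.
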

\begin{probis}\label{condilaw11}
For any $n\ge 0$, conditionally on $\widetilde{S}'[0,n]$, $\left\{h'_j:0\le j\le n\right\}$ are i.i.d. following a $\{0,1,\ldots\}$-valued geometric distribution with success probability $15/16$. Consequently,
$\big(\xi'_{\rm L}(\mathbf{x},(N')^{-1}_{\pm}(n)):\mathbf{x}\in\mathbf{X}\big)$ are conditionally independent with the following distribution.
\begin{itemize}
	\item {If $n$ is even}, then for any $\x=(x,x+{\bf e}_1)$, $\xi'_{\rm L}(\x,(N')^{-1}_-(n))=\xi'_{\rm L}(\x,(N')^{-1}_+(n))$ and
		\[\p\left\{\xi'_{\rm L}(\x,(N')^{-1}_-(n))=l\,\Big\vert\,\widetilde{S}'[0,n]\right\}=p(\widetilde{\xi}'(x+{\bf e}_1,n),l).\]
	\item {If $n$ is odd}, then for $\x=(x,x+{\bf e}_1)\neq \x(\widetilde{S}'_n)$, $\xi'_{\rm L}(\mathbf{x},(N')^{-1}_{\pm}(n))$ has the same law as that in the previous case; while for $\x(\widetilde{S}'_n)=(x,x+{\bf e}_1)$,
	\begin{align*}
		&\quad\,\p\left\{\Big(\xi'_{\rm L}(\x,(N')^{-1}_-(n)),\,\xi'_{\rm L}(\x,(N')^{-1}_+(n))\Big)=(l_1,l_2)\,\Big\vert\,\widetilde{S}'[0,n]\right\}\\
		&=p(\widetilde{\xi}'(x+{\bf e}_1,n)-1, l_1)p(1,l_2-l_1).
	\end{align*}
\end{itemize}
\end{probis}
We now give the ``stopping-time versions'' of Propositions \ref{condilaw1} and \ref{condilaw11}. {Note that by \eqref{iff}, for any $m,k\ge 1$ and $\x=(x,x+{\bf e}_1)\in \X\setminus\{\x(L^k_m)\}$, }
\[{\xi_{\rm L}(x,T^k_m)=\xi_{\rm L}(x+{\bf e}_1,T^k_m).}\]
\begin{pro}\label{condilaw2}
	For any $m,k\ge 1$ and $k$ distinct points $\{y_1,\ldots,y_k\}\subset \z^2$, conditionally on $\widetilde{S}_{[0,N_{T^k_m}]}$, $L^j_m=y_j$ for $j=1,\ldots,k$, and $M^k_m$, $\Big(\xi_{\rm L}(\mathbf{x},T^k_m):\mathbf{x}\in\mathbf{X}\setminus \{\x(y_j):j=1,\ldots,k\}\Big)$ are independent with the following distribution. 
	For $\x=(x,x+{\bf e}_1)\in\mathbf{X}\setminus \{\x(y_j):j=1,\ldots,k\}$, 
  \begin{align*}
      &\p\left\{\xi_{\rm L}\left(\x,T^k_m\right)=l\,\Big\vert\, \widetilde{S}_{[0,N_{T^k_m}]},L^j_m=y_j\,\forall\,j=1,\ldots,k,M^k_m\right\}\\=\;&1_{\{l< m-\widetilde{\xi}^+(\x,N_{T^k_m})\}}\frac{p(\widetilde{\xi}(x,N_{T^k_m}),l)}{\sum_{i=0}^{m-\widetilde{\xi}^+(\x,N_{T^k_m})-1}p(\widetilde{\xi}(x,N_{T^k_m}),i)}.
  \end{align*}
\end{pro}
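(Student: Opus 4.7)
The plan is to reduce Proposition~\ref{condilaw2} to Proposition~\ref{condilaw1} by showing that, conditional on the skeleton $\widetilde{S}_{[0,N_{T^k_m}]}$, the extra event $\{L^j_m=y_j,\,j=1,\ldots,k\}\cap M^k_m$ factors over the pairs $\x\in\X$. Explicitly, this event imposes (i) constraints on the lazy local times at the pairs $\x(y_1),\ldots,\x(y_k)$ that force the $y_j$ to become the first $k$ sites of local time $m$ in the prescribed order, and (ii) at every other pair $\x=(x,x+{\bf e}_1)$, the non-favorite requirement $\xi(x,T^k_m)\vee\xi(x+{\bf e}_1,T^k_m)<m$. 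Because local times are nondecreasing, (ii) imposed at the final time $T^k_m$ automatically implies the corresponding condition at every earlier time, so the ordering requirement hides no additional constraint at the unexceptional pairs.

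First I would verify the symmetry $\xi_{\rm L}(x,T^k_m)=\xi_{\rm L}(x+{\bf e}_1,T^k_m)$ for every $\x\notin\{\x(y_j):j=1,\ldots,k\}$. By \eqref{iff}, the two lazy counts can differ only when $T^k_m+1\in\mathcal{L}$ and $S_{T^k_m-1}\in\x$; but then the defining relation of $\mathcal{L}$ gives $S_{T^k_m-1}=y_k-{\bf e}_1$, and since $T^k_m$ is odd in this case the parity of $y_k=S_{T^k_m}$ forces $\x(y_k)=(y_k-{\bf e}_1,y_k)$, so that $S_{T^k_m-1}\in\x(y_k)$. Since $\x\neq\x(y_k)$ by assumption, this is a contradiction, and requirement~(ii) collapses to the single inequality $\xi_{\rm L}(\x,T^k_m)<m-\widetilde{\xi}^+(\x,N_{T^k_m})$ appearing in the statement.

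With the symmetry in hand, I would disintegrate over the countable family of deterministic realizations of $\widetilde{S}_{[0,N_{T^k_m}]}$ ending in $\x(y_k)$ and apply Proposition~\ref{condilaw1} at the corresponding deterministic $n$. That proposition yields conditional independence of $(\xi_{\rm L}(\x,T^k_m))_{\x\in\X}$ given the skeleton, with marginal law $p(\widetilde{\xi}(x,n),\cdot)$ at every $\x\neq\x(\widetilde{S}_n)$. Using the representation $\xi_{\rm L}(\x,T^k_m)=\sum_{j\le N_{T^k_m}:\widetilde{S}_j=x}h_j$, distinct pairs involve disjoint families of the i.i.d.\ geometric counts $h_j$; constraint~(i) depends only on the $h_j$'s attached to visits to $\{y_1,\ldots,y_k\}$, while the collapsed form of (ii) factors pairwise over the remaining pairs. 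Imposing both simultaneously therefore preserves the independence of $(\xi_{\rm L}(\x,T^k_m))_{\x\notin\{\x(y_j)\}}$ and multiplies each marginal by the truncation indicator; renormalizing by $\sum_{i=0}^{m-\widetilde{\xi}^+(\x,N_{T^k_m})-1}p(\widetilde{\xi}(x,N_{T^k_m}),i)$ yields the stated formula.

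The principal obstacle is that $N_{T^k_m}$ is random rather than deterministic, so Proposition~\ref{condilaw1} cannot be applied directly; this is resolved by the disintegration above. A secondary subtlety is the parity-dependent distinction in Proposition~\ref{condilaw1} between the distribution at $\x(\widetilde{S}_n)$ and at other pairs, but here $\x(\widetilde{S}_{N_{T^k_m}})=\x(y_k)$ lies in the excluded set, so this case is absorbed into constraint~(i) and never enters the final formula for the pairs at hand.
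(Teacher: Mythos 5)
Your proof is correct, but it takes a genuinely different route from the paper's. The paper proves Proposition~\ref{condilaw2} directly from scratch: it introduces the truncated lazy counts $\hat h_n$, constructs an explicit bijection $\Phi$ between valid configurations $\{\hat h(x,l)\}$ and compatible paths $S_{[0,T^k_m]}$, and then evaluates the path probability $\p\bigl(S_{[0,T^k_m]}=\Phi(\rho)\bigr)\asymp (1/4)^{\text{length of }\Phi(\rho)}$, which visibly factorizes over $\z^2_{\rm e}$-sites, yielding the conditional independence and the truncated geometric-sum law. You instead reduce Proposition~\ref{condilaw2} to the fixed-time Proposition~\ref{condilaw1} by disintegrating over deterministic skeleton realizations $\eta$ ending in $\x(y_k)$, using the i.i.d.\ geometric law of $(h_j)_{j\le|\eta|}$, and arguing that the additional conditioning $\{N_{T^k_m}=|\eta|\}\cap\{L^j_m=y_j,\,j\le k\}\cap M^k_m$ factors over pairs as constraints on disjoint families of $h_j$'s. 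This is a valid and arguably more modular argument: it makes explicit how the stopping-time proposition is the fixed-time one plus a factorizable conditioning, whereas the paper recomputes everything. The price is that you must be more careful at the disintegration step: $\{\widetilde S_{[0,N_{T^k_m}]}=\eta\}$ is not the same as $\{\widetilde S_{[0,|\eta|]}=\eta\}$; it is the intersection of the latter with $\{N_{T^k_m}=|\eta|\}$, an event measurable with respect to $\sigma(\widetilde S_{[0,|\eta|]}, h_0,\ldots,h_{|\eta|})$ that, together with $\{L^j_m=y_j\}\cap M^k_m$, depends only on the $h_j$'s at visits to $\{\x(y_1),\ldots,\x(y_k)\}$ beyond the pairwise constraint $\xi_{\rm L}(\x,T^k_m)<m-\widetilde\xi^+(\x,N_{T^k_m})$ at the unexceptional pairs. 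You gesture at this but the write-up leaves it implicit. Your observations that the symmetry at unexceptional pairs follows from \eqref{iff} since $S_{T^k_m-1}\in\x(y_k)$, that monotonicity of local times collapses the ordering constraint to a constraint at time $T^k_m$ only, and that the parity-dependent special case in Proposition~\ref{condilaw1} is confined to the excluded pair $\x(y_k)$, are all correct and match the considerations the paper handles via $\hat h_{N_{T^k_m}}$ and the indicator in \eqref{hatH1}.
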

\begin{proof}
Fixing $m$ and $k$, for any even $n\in [0,N_{T^k_m}]$, let
\[\hat{h}_n:=\begin{cases}
    h_n,&\text{ if }n<N_{T^k_m},\\
    \Big[\big(T^k_m\wedge N^{-1}_+(n)-N^{-1}_-(n)\big)/2\Big],&\text{ if }n=N_{T^k_m},
\end{cases}\]
which is the number of consecutive excursions $(\widetilde{S}_n,\widetilde{S}_n+{\bf e}_1,\widetilde{S}_n)$ right after the $N^{-1}_-(n)$-th step before time $T^k_m$. We define $\hat{h}(x,l):=\hat{h}_n$ when $n=\widetilde{T}(x,l)$ (recall \eqref{Txl}).
Note that by adding such excursions to the path $\widetilde{S}_{[0,N_{T^k_m}]}$, we get either $S_{[0,T^k_m]}$ or $S_{[0,T^k_m-1]}$. Here the latter case happens if and only if $T^k_m+1\in\mathcal{L}$, which is also equivalent to $\{L^k_m=\widetilde{S}_{N_{T^k_m}}+{\bf e}_1\}$.

Thus under the conditioning, to get the whole path $S_{[0,T^k_m]}$, it remains to decide $$\Big\{\hat{h}_n:0\le n\le N_{T^k_m},n\text{ is even}\Big\}$$ or equivalently 
\[\left\{\hat{h}(x,l):x\in \widetilde{S}_{[0,N_{T^k_m}]}\cap \z^2_{\rm e},l=1,\ldots,\widetilde{\xi}(x,N_{T^k_m})\right\}.\]
This naturally defines a bijection $\Phi$ from all possibilities of $\{\hat{h}(x,l)\}$, denoted by $\hat{H}$, to all possibilities of $S_{[0,T^k_m]}$. We observe that $\hat{H}$ is the collection of $\{\rho(x,l):x,l\}$ satisfying
\begin{itemize}
    \item for any $x\in \widetilde{S}_{[0,N_{T^k_m}]}\cap \z^2_{\rm e}$ with $\x(x)\in \left\{\x(y_j):j=1,\ldots,k\right\}$,
    \begin{align}\label{hatH1}
        \sum_{l=1}^{\widetilde{\xi}(x,N_{T^k_m})}\rho(x,l)+\widetilde{\xi}^+(\x,N_{T^k_m})=m-1{\left\{y_k=\widetilde{S}_{N_{T^k_m}}+{\bf e}_1=x+{\bf e}_1\right\}};
    \end{align}
    \item for any $x\in \widetilde{S}_{[0,N_{T^k_m}]}\cap \z^2_{\rm e}$ with $\x(x)\notin \left\{\x(y_j):j=1,\ldots,k\right\}$,
    \begin{align}\label{hatH2}
        \sum_{l=1}^{\widetilde{\xi}(x,N_{T^k_m})}\rho(x,l)+\widetilde{\xi}^+(\x,N_{T^k_m})<m.
    \end{align}
\end{itemize}

Also note that for any $\rho\in \hat{H}$,
\begin{align*}
    \p\left(S_{[0,T^k_m]}=\Phi(\rho)\right)&\asymp \left(\frac{1}{4}\right)^{\text{length of }\Phi(\rho)}\asymp\left(\frac{1}{4}\right)^{N_{T^k_m}+2\sum_{x,l}\rho(x,l)}\\
    &=\left(\frac{1}{4}\right)^{N_{T^k_m}}\prod_{x,l}\left(\frac{1}{16}\right)^{\rho(x,l)}\\
    &\overset{(*)}{=}\left(\frac{1}{4}\right)^{N_{T^k_m}}\left(\frac{16}{15}\right)^{[N_{T^k_m}/2]+1}\prod_{x,l}\left\{\left(\frac{1}{16}\right)^{\rho(x,l)}\left(\frac{15}{16}\right)\right\},
\end{align*}
where $N_{T^k_m}$ on the right-hand side particularly represents the length of the conditioned path of $S_{[0,T^k_m]}$ and $(*)$ follows from the observation that 
\[\#\big\{(x,l):x\in \widetilde{S}_{[0,N_{T^k_m}]}\cap \z^2_{\rm e},l=1,\ldots,\widetilde{\xi}(x,N_{T^k_m})\big\}=[N_{T^k_m}/2]+1.\]
Hence we get $\{\hat{h}(x,l):l\}$ for $x\in \widetilde{S}_{[0,N_{T^k_m}]}\cap \z^2_{\rm e}$ are conditionally independent, and for each $x$, $\{\hat{h}(x,l):l\}$ has the same law as independent geometric random variables with success probability $15/16$ conditioned on their sum satisfying \eqref{hatH1} or \eqref{hatH2} according as $\x(x)\in \left\{\x(y_j):j=1,\ldots,k\right\}$ or $\x(x)\notin \left\{\x(y_j):j=1,\ldots,k\right\}$. This easily leads to the conclusion.
\end{proof}

\begin{probis}\label{condilaw21}
	For any $m,k\ge 1$, conditionally on $\widetilde{S}'_{[0,N'_{T^k_m}]}$, $\left(L^j_m:j=1,\ldots,k\right)$, and $M^k_m$, $\left(\xi_{\rm L}(\mathbf{x},T^k_m):\mathbf{x}\in\mathbf{X}\setminus \{\x(L^j_m):j=1,\ldots,k\}\right)$ are independent with the following distribution. 
		For $\x=(x,x+{\bf e}_1)\in\mathbf{X}\setminus \{\x(y_j):j=1,\ldots,k\}$,
  \begin{align*}
      \p\left\{\xi_{\rm L}\left(\x,T^k_m\right)=l\,\Big\vert\, \widetilde{S}'_{[0,N'_{T^k_m}]},L^j_m=y_j\,\forall\,j=1,\ldots,k,M^k_m\right\}\\=1_{\{l< m-\widetilde{\xi}^+(\x,N'_{T^k_m})\}}\frac{p(\widetilde{\xi}(x+{\bf e}_1,N'_{T^k_m}),l)}{\sum_{i=0}^{m-\widetilde{\xi}^+(\x,N'_{T^k_m})-1}p(\widetilde{\xi}(x+{\bf e}_1,N'_{T^k_m}),i)}.
  \end{align*}
\end{probis}

\subsection{Preliminaries for the upper bound in Theorem \ref{mainthm2}: local time bounds}

The goal of this subsection is to prove Proposition \ref{Theta}, a crucial step in the proof of the upper bound. This proposition dictates in a quantitative fashion that with high probability, the lazy part of the local time of ``near-favorite'' sites (whose formal definition will be introduced in the next subsection) must behave relatively regularly. 

We start with a large deviation bound on the stopping time $T_m^k$ from Lemma \ref{Tkm}. \red{For the subsequent estimates, we fix 
\begin{equation}\label{eq:kappa1}
    \kappa_1 \in (1/3,7/20),
\end{equation}
and treat it as a constant (i.e., omitting the notation for the dependence of other constants upon $\kappa_1$).}
With this choice, we set $\delta=7/5-4\kappa_1$ in the definition of $\psi_m$ in \eqref{eq:psidef}, i.e., let
\begin{equation}
    \psi_m=\psi_m(7/5-4\kappa_1):=\exp\left\{\pi^{1/2}m^{1/2}+\pi^{2-2\kappa_1}m^{1-2\kappa_1}\right\},
\end{equation}
which satisfies
\begin{align}\label{psi_m}
    m=\frac{1}{\pi}(\log \psi_m)^2-2(\log \psi_m)^{3-4\kappa_1}+O\big[(\log m)^{4-8\kappa_1}\big].
\end{align}
Then Lemma \ref{Tkm} particularly implies that there exists $c>0$ such that  for any $m,k\ge 1$,
\begin{align}\label{Mestimates1}
    \p\left(T^k_m>\psi_m,M^k_m\right)<e^{-c m}.
\end{align}

Next, we record an estimate on the number of near-maxima of local time. 
\begin{pro}\label{Mestimates}
For sufficiently large $m$,
\begin{align}
\p\left(\#\Big\{x\in \z^2_{\rm e}:\widetilde{\xi}(x,\psi_m)>\frac{15}{16}m-m^{4/5}\Big\}>e^{16m^{1-2\kappa_1}}\right)< e^{-m^{1-2\kappa_1}},\label{Mestimates_1}\\
\p\left(\#\Big\{x\in \z^2_{\rm o}:\widetilde{\xi}'(x,\psi_m)>\frac{15}{16}m-m^{4/5}\Big\}>e^{16m^{1-2\kappa_1}}\right)< e^{-m^{1-2\kappa_1}}\label{Mestimates_2}.
\end{align}
\end{pro}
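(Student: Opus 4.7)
The plan is a first-moment bound combining the tail estimate \eqref{side 01} for $\widetilde\xi(0,n)$ with the strong Markov property and translation invariance, followed by Markov's inequality. The two claims \eqref{Mestimates_1} and \eqref{Mestimates_2} are proved by the same argument, applied to $\widetilde S$ and $\widetilde S'$ respectively; the latter has an analogous skeleton Markov chain with the same covariance matrix $\frac{16}{15} I_2$, so Lemma \ref{erdos_taylor}(ii) extends verbatim to $\widetilde\xi'$, and I therefore treat only \eqref{Mestimates_1}.

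Set $n := \psi_m$ and $K := \frac{15}{16}m - m^{4/5}$, and take $\beta := 3-4\kappa_1$. Since $\kappa_1 \in (1/3, 7/20)$, $\beta \in (8/5, 5/3) \subset (1,2)$ is admissible in \eqref{side 01}. The key arithmetic input is to combine the definition of $K_2$ with \eqref{psi_m}:
\begin{equation*}
K_2(\beta) - \tfrac{15}{16}m = \tfrac{15}{16}\bigl(\tfrac{1}{\pi}(\log\psi_m)^2 - m\bigr) - 2(\log\psi_m)^{3-4\kappa_1} = -\tfrac{1}{8}(\log\psi_m)^{3-4\kappa_1} + O\bigl((\log\psi_m)^{4-8\kappa_1}\bigr),
\end{equation*}
in which the error is of smaller order because $\kappa_1 > 1/4$, and the leading term is of order $-m^{3/2-2\kappa_1}$, strictly dominating $-m^{4/5}$ because $\kappa_1 < 7/20$. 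Hence $K_2(\beta) < K$ for $m$ large, and \eqref{side 01} yields
\begin{equation*}
\p\bigl(\widetilde\xi(0,\psi_m) \geq K\bigr) \leq \psi_m^{-1}\, e^{8(\log\psi_m)^{\beta-1}} \leq \psi_m^{-1}\, e^{C_0\, m^{1-2\kappa_1}}
\end{equation*}
for some $C_0 < 12$, using $(\log\psi_m)^{\beta-1} \sim \pi^{1-2\kappa_1}\, m^{1-2\kappa_1}$ and $8\pi^{1-2\kappa_1} < 8\pi^{1/3} < 12$.

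For general $x \in \z^2_{\rm e}$, applying the strong Markov property at the first $\widetilde S$-hitting time of $x$ and using translation invariance yields $\p(\widetilde\xi(x,\psi_m) > K) \leq \p(\mathsf{H}^{\widetilde S}_x \leq \psi_m)\cdot \p(\widetilde\xi(0,\psi_m) \geq K - 1)$. Summing over $x \in \z^2_{\rm e}$ and using the trivial bound $\sum_{x \in \z^2_{\rm e}} \mathbf{1}\{\mathsf{H}^{\widetilde S}_x \leq \psi_m\} \leq \psi_m + 1$ (as $\widetilde S$ visits at most $\psi_m + 1$ distinct sites in its first $\psi_m$ steps) gives
\begin{equation*}
\e\Bigl[\#\bigl\{x \in \z^2_{\rm e} : \widetilde\xi(x,\psi_m) > K\bigr\}\Bigr] \leq 2\, e^{C_0\, m^{1-2\kappa_1}},
\end{equation*}
after which Markov's inequality bounds the probability in \eqref{Mestimates_1} by $2 e^{(C_0 - 16)\, m^{1-2\kappa_1}} \leq e^{-m^{1-2\kappa_1}}$ for $m$ large. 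The only non-routine step is the arithmetic tuning of $\beta$: its upper bound $\beta < 5/3$ comes from requiring the tail cost $8(\log\psi_m)^{\beta-1} \lesssim 8\pi^{1-2\kappa_1} m^{1-2\kappa_1}$ to stay strictly below the $16\, m^{1-2\kappa_1}$ slack in the statement (which forces $\kappa_1 > 1/3$), and its lower bound $\beta > 8/5$ comes from requiring $K_2(\beta) < K$ (which forces $\kappa_1 < 7/20$). The interval \eqref{eq:kappa1} for $\kappa_1$ is tailored precisely to satisfy both.
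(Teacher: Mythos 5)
Your argument is correct and follows essentially the same route as the paper: a first-moment bound for the number of sites with $\widetilde\xi(x,\psi_m)\geq \tfrac{15}{16}m-m^{4/5}$ via translation invariance, the one-point tail estimate \eqref{side 01} applied at $\beta=3-4\kappa_1$, the comparison $K_2(\beta)<\tfrac{15}{16}m-m^{4/5}$ from \eqref{psi_m} (which uses $\kappa_1<7/20$), and Markov's inequality (which uses $\kappa_1>1/3$ to keep $8\pi^{1-2\kappa_1}<16$). The only cosmetic difference is in deriving \eqref{Mestimates_2}: the paper reduces it to \eqref{Mestimates_1} by shifting $S$ by one time step, while you re-run the same estimate directly for $\widetilde\xi'$; both are immediate.
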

\begin{proof}
\eqref{Mestimates_2} follows from \eqref{Mestimates_1} by considering the shifted path $\widetilde{S}\circ\theta_1$ and using the Markov property. Thus we only need to prove \eqref{Mestimates_1}.

	Using \eqref{psi_m}, we have for $m$ large,
\begin{align}\label{Markov}
\begin{aligned}
	 &\quad\,\p\left(\#\Big\{x\in \z^2_{\rm e}:\widetilde{\xi}(x,\red{\psi_m})\ge \frac{15}{16}m-m^{4/5}\Big\}> e^{16m^{1-2\kappa_1}}\right)\\
	&\le \p\left(\#\Big\{x\in \z^2_{\rm e}:\widetilde{\xi}(x,\psi_m)\ge \frac{15}{16\pi}(\log \psi_m)^2-2(\log \psi_m)^{3-4\kappa_1}\Big\}> e^{16m^{1-2\kappa_1}}\right).	
\end{aligned}
\end{align}
Note that
\begin{align*}
&\quad\,\e\left(\#\left\{x\in \z^2_{\rm e}:\widetilde{\xi}(x,n)\ge \frac{15}{16\pi}(\log n)^2-2(\log n)^{3-4\kappa_1}\right\}\right)\\
&\le \e\left(\#\left\{0\le j\le [n/2]:\widetilde{\xi}(0,n)\circ \theta_{2j}\ge \frac{15}{16\pi}(\log n)^2-2(\log n)^{3-4\kappa_1}\right\}\right)\\
&\le (n/2+1)\p\left\{\widetilde{\xi}(0,n)\ge \frac{15}{16\pi}(\log n)^2-2(\log n)^{3-4\kappa_1}\right\} \\
&\overset{\eqref{side 01}}{\le} e^{8(\log n)^{2-4\kappa_1}}
\end{align*}
for sufficiently large $n$. Thus an application of Markov's inequality yields that 
\[\mbox{ RHS of }\eqref{Markov}\le e^{8(\log \psi_m)^{2-4\kappa_1}}/e^{16m^{1-2\kappa_1}}<e^{-m^{1-2\kappa_1}}.\qedhere\]
\end{proof}

We finally arrive at the goal of this subsection.
 Recall $\sigma^2=\frac{16}{225}$ defined in Lemma \ref{moderate}. Henceforth we fix a large $c_*$  such that $$\frac{1}{2\sigma^2}\frac{16}{15}c^2_*\ge 18.$$ For any interval $I=[a,b)\subset [0,m)$, let
  \begin{equation}\label{eq:thetadef}
  \Theta(k,m,I):=\Theta_-(k,m,I)\cup \Theta_+(k,m,I)    
  \end{equation}
   where
	\begin{align*}
	\Theta_-(k,m,I)&:= \Big\{x\in \z^2_{\rm e}:\widetilde{\xi}(x,N_{T^k_m})\le \frac{15}{16}a-c_* m^{1-\kappa_1},\xi(x,T^k_m)\in I\Big\},\\
	\Theta_+(k,m,I)&:= \Big\{x\in \z^2_{\rm e}:\widetilde{\xi}(x,N_{T^k_m})> \frac{15}{16}b+c_* m^{1-\kappa_1},\xi(x,T^k_m)\in I\Big\},
\end{align*}
and $\Theta'(k,m,I)$ is defined by replacing $\z^2_{\rm e}$, $\widetilde{\xi}$, $N$, $\xi$ by $\z^2_{\rm o}$, $\widetilde{\xi}'$, $N'$, $\xi'$ respectively.  
\begin{pro}\label{Theta}
For any $\varepsilon>0$, there exists $c=c(\varepsilon)>0$ such that for all $m,k\ge 1$ and $I=[a,b)\subset (m-m^{4/5-\varepsilon},m]$,
\begin{align*}
	\p\left\{\Theta(k,m,I)\cup \Theta'(k,m,I)\neq \emptyset,M^k_m\right\}
 <e^{-cm^{1-2\kappa_1}}.
\end{align*} 
\end{pro}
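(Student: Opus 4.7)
By symmetry it suffices to bound $\p(\Theta(k,m,I)\neq\emptyset, M_m^k)$; the primed half follows in the same way via Propositions \ref{condilaw11} and \ref{condilaw21}. The plan has four ingredients: (i) restrict to $\{T_m^k\le\psi_m\}$ at cost $e^{-cm}$ via \eqref{Mestimates1}; (ii) cap $|\mathcal{W}|:=|\{x\in\z^2_{\rm e}:\widetilde{\xi}(x,\psi_m)>\tfrac{15}{16}m-m^{4/5}\}|\le e^{16m^{1-2\kappa_1}}$ via \eqref{Mestimates_1} at cost $e^{-m^{1-2\kappa_1}}$; (iii) apply Proposition \ref{condilaw2} to turn $\{x\in\Theta\}$ into a one-sided deviation of the conditionally negative binomial variable $\xi_{\rm L}(x,T_m^k)$ away from its mean $n/15$, where $n:=\widetilde{\xi}(x,N_{T_m^k})$; (iv) union-bound after stratifying by the value of $n$.

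\textbf{Per-site deviation.} Fix $x\in\z^2_{\rm e}$ with $\x(x)\notin\{\x(L_m^j)\}_{j\le k}$ and $n\ge 1$. Writing $I=[a,b)$ and $\xi(x,T_m^k)=n+\xi_{\rm L}(x,T_m^k)$, membership in $\Theta_+$ forces $\xi_{\rm L}(x,T_m^k)\le n/15-\tfrac{16}{15}c_*m^{1-\kappa_1}$, while membership in $\Theta_-$ forces $\xi_{\rm L}(x,T_m^k)\ge\max\{n/15+\tfrac{16}{15}c_*m^{1-\kappa_1},\,a-n\}$. By Proposition \ref{condilaw2} the conditional law of $\xi_{\rm L}(x,T_m^k)$ is $p(n,\cdot)$ truncated to $\{l<m-\widetilde{\xi}^+(\x(x),N_{T_m^k})\}$; a quick Chernoff bound shows the truncation factor is bounded below by an absolute constant, since $n/15\le m/15$ sits well below the truncation threshold $\ge m-n$.

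\textbf{Union bound.} Stratify by $n$. For $n>\tfrac{15}{16}m-m^{4/5}$ (which contains every $\Theta_+$-candidate, as $b>m-m^{4/5-\varepsilon}$), the required deviation has magnitude $t\asymp m^{1-\kappa_1}$; since $\kappa_1\in(1/3,7/20)\subset(0,1/2)$, the hypotheses of Lemma \ref{moderate} hold, giving a per-site bound $\exp(-\tfrac{t^2}{2\sigma^2 n}(1+o(1)))\le e^{-18m^{1-2\kappa_1}}$ by the choice of $c_*$; unioned over $|\mathcal{W}|\le e^{16m^{1-2\kappa_1}}$ sites, this contributes $\le e^{-2m^{1-2\kappa_1}}$. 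For $1\le n\le\tfrac{15}{16}m-m^{4/5}$ (only compatible with $\Theta_-$), the required upward deviation is $\ge\tfrac{1}{15}m^{4/5}$; Lemma \ref{moderate} (when $n\gg m^{4/5}$) or a direct Chernoff bound (when $n\lesssim m^{4/5}$) gives a per-site probability $\le e^{-cm^{3/5}}$, and unioned over the trivial count $\le\psi_m=e^{O(\sqrt m)}$ of visited sites the contribution is $\le e^{-c'm^{3/5}}\ll e^{-m^{1-2\kappa_1}}$, as $\kappa_1>1/5$. The boundary sites $L_m^j$ themselves satisfy $\xi(L_m^j,T_m^k)=m$ on $M_m^k$, and since $m\notin I$ (as $I\subset(m-m^{4/5-\varepsilon},m]$ is half-open at $m$) they do not contribute to $\Theta$; the at-most-$k$ neighbours $L_m^j\pm{\bf e}_1$ admit an analogous conditional-law description and are handled by the same moderate-deviation bound.

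\textbf{Main obstacle.} The delicate point is uniformity of the per-site deviation bound across the full range $1\le n\le m$, coupled with control of the truncation factor in Proposition \ref{condilaw2} — concretely, bridging smoothly between the moderate-deviation regime (Lemma \ref{moderate}, which is necessary to lock in the sharp exponent $m^{1-2\kappa_1}$ with the right prefactor dictated by $c_*$) and the large-deviation regime (when $n$ becomes small compared to $a-n$, where a Chernoff bound must take over).
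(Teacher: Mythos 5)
Your overall plan mirrors the paper's — restrict to $\{T^k_m\le\psi_m\}$, cap the number of high-$\widetilde{\xi}$ sites via Proposition \ref{Mestimates}, apply a per-site moderate-deviation estimate from Lemma \ref{moderate}, then union. The gap is in the per-site step, and it is not cosmetic. You invoke Proposition \ref{condilaw2}, which conditions at the stopping time $N_{T^k_m}$ and \emph{on $M^k_m$}; the lazy part $\xi_{\rm L}(\x,T^k_m)$ is therefore not an honest negative binomial but one \emph{truncated} to $\{l<m-\widetilde{\xi}^+(\x,N_{T^k_m})\}$. Your claim that the truncation factor is bounded below because the threshold is $\ge m-n$ (with $n=\widetilde{\xi}(x,N_{T^k_m})$) has the inequality backwards: since $\widetilde{\xi}^+(\x)\ge n$, the threshold is $\le m-n$. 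In the $\Theta_+$ regime $n>\frac{15}{16}b+c_*m^{1-\kappa_1}$ can push $m-\widetilde{\xi}^+(\x)$ \emph{below} the mean $n/15$, so the normalizing denominator in Proposition \ref{condilaw2} is itself exponentially small, and the conditional probability of $\{\xi_{\rm L}(\x,T^k_m)\in[a-n,b-n)\}$ can be of order $1$ rather than $e^{-cm^{1-2\kappa_1}}$. The same problem recurs for $\Theta_-$ whenever $\widetilde{\xi}(x+{\bf e}_1,N_{T^k_m})$ is large even though $\widetilde{\xi}(x,N_{T^k_m})$ is small, since the truncation is governed by $\widetilde{\xi}^+(\x)$, not $\widetilde{\xi}(x)$ alone.

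The missing idea is the paper's lift to the \emph{fixed} time $\psi_m$: on $\{N_{T^k_m}\le\psi_m\}$ one has $\{\Theta_-(k,m,I)\neq\emptyset\}\subset\{\dot{\Theta}_{\psi_m}(m,a)\neq\emptyset\}$ and $\{\Theta_+(k,m,I)\neq\emptyset\}\subset\{\mathring{\Theta}_{\psi_m}(m,b)\neq\emptyset\}$, where $\dot{\Theta}_{\psi_m}$, $\mathring{\Theta}_{\psi_m}$ are built only from $\widetilde{\xi}(\cdot,\psi_m)$ and the monotone partial sums $\sum_{j\le\Xi}h(x,j)$. These are analyzed with Proposition \ref{condilaw1} at the deterministic time $\psi_m$, where the $h(x,j)$ are genuine i.i.d.\ geometrics with \emph{no truncation by $M^k_m$}; Lemma \ref{moderate} then gives per-site bounds $e^{-17m^{1-2\kappa_1}}$ (high-$\widetilde{\xi}$ sites) and $e^{-17m^{1/2}}$ (low-$\widetilde{\xi}$ sites) uniformly, and the cap $\le e^{16m^{1-2\kappa_1}}$ from Proposition \ref{Mestimates} — which is measurable with respect to $\widetilde{S}_{[0,\psi_m]}$, matching the conditioning — closes the union bound. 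Without this lift you cannot obtain a per-site bound that is uniform over all values of $\widetilde{\xi}^+(\x,N_{T^k_m})$, and the stated exponent $m^{1-2\kappa_1}$ does not follow.
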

\begin{proof}
It suffices to show that
\begin{align}
	&\p\left\{\Theta(k,m,I)\neq \emptyset,M^k_m\right\}<e^{-cm^{1-2\kappa_1}},\mbox{ and }\label{Thetakm1}\\
	&\p\left\{\Theta'(k,m,I)\neq \emptyset,M^k_m\right\}<e^{-cm^{1-2\kappa_1}}.\label{Thetakm2}
\end{align} 
	We will prove only \eqref{Thetakm1}. The proof of \eqref{Thetakm2} is quite similar and hence omitted. 
	
	First, let us prove
\begin{align}\label{Theta-}
	\p\left(\Theta_-(k,m,I)\neq \emptyset,M^k_m\right)<e^{-cm^{1-2\kappa_1}}.
\end{align}
By \eqref{Mestimates1}, 
\begin{align}\label{Thetaest1}
	\p(N_{T^k_m}> \psi_m,M^k_m)\le \p(T^k_m> \psi_m,M^k_m)<e^{-c_1m}
\end{align}
for some universal $c_1>0$. {Recall the notation $h(x,l)$ defined in \eqref{hxl}.} We observe that on $\{N_{T^k_m}\le \psi_m\}$,
\begin{align}\label{Thetaest2}
	\left\{\Theta_-(k,m,I)\neq \emptyset\right\}\subset\Big\{\dot{\Theta}_{\psi_m}(m,a)\neq \emptyset\Big\},
\end{align}
where 
\[\dot{\Theta}_{\psi_m}(m,a):=\bigg\{x\in \z^2_{\rm e}:\dot{\Xi}_m(x)+\sum_{j=1}^{\dot{\Xi}_m(x)}h(x,j)\ge a\bigg\}\]
with $\dot{\Xi}_m(x):=\widetilde{\xi}(x,\psi_m)\wedge (\frac{15}{16}a-c_*m^{1-\kappa_1})$. Indeed, on $\{N_{T^k_m}\le \psi_m\}$, for any $x\in \z^2$,
\begin{align*}
	&\quad\,\left\{\widetilde{\xi}(x,N_{T^k_m})\le \frac{15}{16}a-c_* m^{1-\kappa_1},\xi(x,T^k_m)\ge a\right\}\\
	&\subset \bigcup_{n\in[0,\psi_m]}\left\{\widetilde{\xi}(x,n)\le \frac{15}{16}a-c_* m^{1-\kappa_1},\xi\big(x,N^{-1}_+(n)\big)\ge a\right\}\subset \left\{\dot{\Xi}_m(x)+\sum_{j=1}^{\dot{\Xi}_m(x)}h(x,j)\ge a\right\}.
\end{align*}
Let 
\[E_m:=\Big\{\#\big\{x\in \z^2_{\rm e}:\widetilde{\xi}(x,\psi_m)\ge \frac{15}{16}m-m^{4/5}\big\}\le  e^{16m^{1-2\kappa_1}}\Big\}.\] 
Combining \eqref{Thetaest1}, \eqref{Thetaest2}, and Proposition \ref{Mestimates}, it is enough to show that
\begin{align}\label{Theta_psi}
	\p\left(\dot{\Theta}_{\psi_m}(m,a)\neq \emptyset,E_m\right)<e^{-cm^{1-2\kappa_1}}.
\end{align}

We divide the points in $\dot{\Theta}_{\psi_m}(m,a)$ into two parts: 
\begin{align*}
\begin{aligned}	&\dot{\Theta}^{1}_{\psi_m}(m,a):=\left\{x\in \dot{\Theta}_{\psi_m}(m,a):\widetilde{\xi}(x,\psi_m)\ge {\frac{15}{16}a-c_*m^{3/4}}\right\};\\
		&\dot{\Theta}^{2}_{\psi_m}(m,a):=\left\{x\in \dot{\Theta}_{\psi_m}(m,a): \widetilde{\xi}(x,\psi_m)< {\frac{15}{16}a-c_*m^{3/4}}\right\}.
	\end{aligned}
\end{align*}
By Proposition \ref{condilaw1} (In the proof of \eqref{Thetakm2}, we shall use Proposition \ref{condilaw11} instead.), conditionally on $\widetilde{S}_{[0,\psi_m]}$, the events $\left\{\dot{\Xi}_m(x)+\sum_{j=1}^{\dot{\Xi}_m(x)}h(x,j)\ge a\right\}$, for $x\in \z^2_{\rm e}\cap \widetilde{S}_{[0,\psi_m]}$, are independent with probability {(recall that $\bar{p}(i,j):=p(i,j-i)$)}
\begin{align}\label{x_in_M}
	\sum_{j\ge a}\bar{p}(\dot{\Xi}_m(x),j).
\end{align}
In particular, it follows from \eqref{moderate1} that for any $\varepsilon>0$ and $\eta\in(0,1)$, there exists $M=M(\varepsilon,\eta)\in \z^+$ such that for all $m>M$ and $a\ge m-m^{4/5-\varepsilon}$, we have
\begin{itemize}
	\item for $x$ with $\widetilde{\xi}(x,N_{T^k_m})>{\frac{15}{16}a-c_*m^{3/4}}$,
	\begin{align*}
		\eqref{x_in_M}\le\sum_{j\ge \frac{1}{16}a+c_*m^{1-\kappa_1}}p(\dot{\Xi}_m(x),j)\le  \sum_{j\ge \frac{1}{16}a+c_*a^{1-\kappa_1}}p\left(\frac{15}{16}a,j\right)<e^{-\frac{1}{2\sigma^2}\frac{16}{15}\eta c^2_*m^{1-2{\kappa_1}}};
	\end{align*}
	\item for $x$ with $\widetilde{\xi}(x,N_{T^k_m})\le {\frac{15}{16}a-c_*m^{3/4}}$, 
	\begin{align*}
		\eqref{x_in_M}\le \sum_{j\ge {\frac{1}{16}a+c_*m^{3/4}}}p(\dot{\Xi}_m(x),j)\le \sum_{j\ge {\frac{1}{16}a+c_*a^{3/4}}}p\left(\frac{15}{16}a,j\right)<e^{-\frac{1}{2\sigma^2}\frac{16}{15}\eta c^2_*m^{1/2}}.
	\end{align*}
\end{itemize}
{Also note that there exists $M'=M'(\varepsilon)>0$ such that for all $m\ge M'$ and $a>m-m^{4/5-\varepsilon}$, 
\begin{align*}
    \#\big\{x\in \z^2_{\rm e}:\widetilde{\xi}(x,\psi_m)\ge \frac{15}{16}a-c_*m^{3/4}\big\}&\le \#\big\{x\in \z^2_{\rm e}:\widetilde{\xi}(x,\psi_m)\ge \frac{15}{16}m-m^{4/5}\big\}\\&\le  e^{16m^{1-2\kappa_1}}
\end{align*}
on $E_m$.}

Fixing $\eta\in[\frac{17}{18},1)$, since $\frac{1}{2\sigma^2}\frac{16}{15}c^2_*\ge 18$, we have $\frac{1}{2\sigma^2}\frac{16}{15}\eta c^2_*\ge 17$. It follows from the above analysis that for $m>M\vee M'$ and $a>m-m^{4/5-\varepsilon}$,
\begin{align*}
    &\p\left(\dot{\Theta}^1_{\psi_m}(m,a)\neq \emptyset\,\Big\vert\,\widetilde{S}_{[0,\psi_m]}\right)1_{E_m}\le 1-(1-e^{-17m^{1-2\kappa_1}})^{e^{16m^{1-2\kappa_1}}},\\
    &\p\left(\dot{\Theta}^2_{\psi_m}(m,a)\neq \emptyset\,\Big\vert\,\widetilde{S}_{[0,\psi_m]}\right)\le 1-(1-e^{-17m^{{1/2}}})^{\psi_m}.
\end{align*}	
Consequently, there exists $c_2=c_2(\varepsilon)>0$ such that
\begin{align}\label{Theta2}
	\p\left(\dot{\Theta}_{\psi_m}(m,a)\neq \emptyset\,\Big\vert\,\widetilde{S}_{[0,\psi_m]}\right)1_{E_m}<e^{-c_2m^{1-2\kappa_1}},
\end{align}
which implies \eqref{Theta_psi}.

The proof of 
\begin{align}\label{Theta+}
	\p\left(\Theta_+(k,m,I)\neq \emptyset,M^k_m\right)<e^{-cm^{1-2\kappa_1}}
\end{align}
is similar. Let 
\[\mathring{\Theta}_{\psi_m}(m,b):=\bigg\{x\in \z^2_{\rm e}:\widetilde{\xi}(x,\psi_m)> \frac{15}{16}b+c_* m^{1-\kappa_1},\,\mathring{\Xi}_m+\sum_{j=1}^{\mathring{\Xi}_m}h(x,j)< b\bigg\}\]
with $\mathring{\Xi}_m:=\frac{15}{16}b+c_*m^{1-\kappa_1}$. Then on $\{N_{T^k_m}\le \psi_m\}$,
\begin{align*}
	\left\{\Theta_+(k,m,I)\neq \emptyset\right\}\subset\Big\{\mathring{\Theta}_{\psi_m}(m,b)\neq \emptyset\Big\}.
\end{align*}
Thus it is sufficient to show that \begin{align*}
	\p\left(\mathring{\Theta}_{\psi_m}(m,b)\neq \emptyset,E_m\right)<e^{-cm^{1-2\kappa_1}},
\end{align*}
which is obtained using the same arguments as before and \eqref{moderate2}. We omit the details.

Combining \eqref{Theta-} and \eqref{Theta+}, we get \eqref{Thetakm1}.
\end{proof}
\begin{rem}\label{rem:1.3use}
    {As we shall see in the next subsection, it is quite crucial that the exponent $1-\kappa_1$ in the ``deviation term'' in the definition of 
    $\Theta(k,m,I)$ (see \eqref{eq:thetadef} and below) is strictly less than $2/3$. To this end, Proposition \ref{lower} plays a significant role, which provides extra information on the behaviors of $\widetilde{\xi}(\cdot, \psi_m)$. We emphasize that using only the conditional law of $\xi(\cdot, \psi_m)$ given $\widetilde{S}_{[0,\psi_m]}$ (as that in the proof of Proposition \ref{Theta}) without any extra information, one cannot obtain a bound with exponent less than $3/4$. 
    To be more precise, writing
    \[\dot{\Theta}^{(\delta)}_{\psi_m}(m,a):=\Big\{x\in \dot{\Theta}_{\psi_m}(m,a):\widetilde{\xi}(x,\psi_m)<\frac{15}{16}a-m^{3/4-\delta}\Big\},\quad\text{for }\delta>0,\]
    and using the same arguments as that in the proof of Proposition \ref{Theta}, we can show that
    \begin{align*}
        &\quad\,\p\Big\{\dot{\Theta}^{(\delta)}_{\psi_m}(m,a)\neq \emptyset\,\Big\vert\,\widetilde{S}_{[0,\psi_m]}\Big\}1_{\big\{\#\{x:\,\widetilde{\xi}(x,\psi_m)>\frac{15}{16}m-m^{3/4-\delta}\}>e^{m^{1/2}}\big\}}\\
        &\ge 1-(1-e^{-cm^{1/2-2\delta}})^{e^{m^{1/2}}}
        \rightarrow 1.
    \end{align*}
    Namely, when the path $\widetilde{S}_{[0,\psi_m]}$ exhibits atypical behaviors (from the perspective of Proposition \ref{Mestimates}), it is possible that $\dot{\Theta}^{(\delta)}_{\psi_m}(m,a)\neq \emptyset$. This suggests it is necessary to rule out certain atypical behaviors of $\widetilde{\xi}$ to get a finer bound.}
\end{rem}

\subsection{Proof of the upper bound in Theorem \ref{mainthm2}}\label{se:1.1ub} 
\red{In this subsection, we first wrap up the proof of the upper bound in Theorem \ref{mainthm2} into Proposition \ref{pro:finalreduction}, which is in turn implied (along with various other ingredients) by Propositions \ref{bigalpha} and \ref{smallalpha}, the main ``screening estimates'' mentioned in Section \ref{sec:sketch}, whose proofs are postponed till the next subsection.}

\red{We start with notation.} In the sequel, with $\kappa_1$ fixed as in \eqref{eq:kappa1}, we also fix
\begin{equation}\label{eq:kappa2}
\red{\mbox{$\kappa_2\in(1/3,\kappa_1)$ and $\delta=\kappa_2/N$ with large $N$ such that $\kappa_1>\kappa_2+2\delta>1/3+4\delta$}}
\end{equation}
 \red{and treat $\kappa_2$ and $\delta$ as constants.} Let \begin{equation}
     \mbox{$\Lambda:=\{\delta,2\delta,\ldots\}$ and $\Lambda_0:=\Lambda\cap [0,\kappa_2]$.}
 \end{equation} 
\red{We also write
\begin{equation}\label{eq:Kappa}
    \kappa:=\kappa_2-2\delta.
\end{equation}
By \eqref{eq:kappa2}, $\kappa>1/3$.}

\red{We now divide the upper bound of in Theorem \ref{mainthm2} into different cases according to the location of the (possible) most favorite sites. In order to benefit from the local time decompositions from Section \ref{Preliminaries}, one must find a pairing (i.e., domino tiling) of $\mathbb{Z}^2$ such that no two such sites share one domino. Luckily, as we only need to treat at most four sites simultaneously, it is always possible to find such a pairing.} Recall the pairing $\X$ of $\mathbb{Z}^2$ introduced \red{in \eqref{eq:Xdef}} and define $\Pi^k_m:=\bigcap_{i,j=1}^k\left\{(L^i_m,L^j_m)\notin\X\right\}$. We \red{also} introduce two more pairings. Write 
\begin{equation}\label{eq:Ydef}
\Y:= \{(x,x+{\bf e}_1): x=(2x_1,x_2), x_1,x_2\in\mathbb{Z}\}    
\end{equation}
and 
\begin{equation}\label{eq:Yprimedef}
\Y':= \{(x,x+{\bf e}_1): x=(2x_1+1,x_2), x_1,x_2\in\mathbb{Z}\}.    
\end{equation}
Let
$\Upsilon_m=\bigcap_{i,j=1}^4\left\{(L^i_m,L^j_m)\notin\Y\right\}$ and define $\Upsilon'_m$ similarly with $\Y$ replaced by $\Y'$.

\begin{pro}\label{pro:finalreduction}
\red{With $\kappa$ fixed in \eqref{eq:Kappa},} uniformly in $m\ge 1,$
	\begin{align}\label{eq:Piub}
		\p\left(M^4_m,\Pi^4_m\right)\lesssim \frac{1}{m^{3\kappa}}.
	\end{align}
Similarly, \begin{align}\label{eq:Upub}
		\p\left(M^4_m,\Upsilon_m\cup \Upsilon'_m \right)\lesssim \frac{1}{m^{3\kappa}}.
	\end{align}
\end{pro}
\begin{proof}[Proof of the upper bound in Theorem \ref{mainthm2} assuming Proposition \ref{pro:finalreduction}]
	In this proof, we write $\X_1=\X$ and $\Pi^k_{m,1}=\Pi^k_m$ to emphasize the role of ``${\bf e}_1$'' in the two definitions. For $j=2,3,4$, we define $\X_j$ (resp. $\Pi^k_{m,j}$) through replacing ${\bf e}_1$ by ${\bf e}_j$ in the definition of $\X_1$ (resp. $\Pi_{m,1}$). Then using the same arguments, we can show that for any $j=1,2,3,4$,
	\begin{align*}
		\p\left(M^4_m,\Pi^4_{m,j}\right)\lesssim \frac{1}{m^{3\kappa}}.
	\end{align*}
	
 By an elementary combinatorial argument, given any four distinct points $x_1,x_2,x_3,x_4\in\mathbb{Z}^2$, one can find at least one pairing of $\mathbb{Z}^2$ among $\X_j$, $j=1,2,3,4$, $\Y$ and $\Y'$, such that each pair in that pairing contains at most one of $x_j$, $j=1,2,3,4$. Hence, one can readily check that 
	\[\p(M^4_m)\le \sum_{j=1}^4 \p\left(M^4_m,\Pi^4_{m,j}\right)+\p\left(M^4_m,\Upsilon_m\cup \Upsilon'_m \right).\]
	We finish the proof by applying Proposition \ref{pro:finalreduction} and Borel-Cantelli.
\end{proof}

We now turn to the proof of Proposition \ref{pro:finalreduction}. We only prove \eqref{eq:Piub} as the proof of \eqref{eq:Upub} is almost identical. To control the probability of $M_m^4$, as discussed in Section \ref{Introduction}, we define the following set of near-favorite sites of $S$ at time $T_m^k$: for $\alpha\in(0,1]$, let
\begin{align}\label{eq:nearfavorite}
	\mathcal{M}^k(m,\alpha)&:=\left\{x\in\z^2:\x(x)\in\X\setminus\{\x(L^1_m),\ldots,\x(L^k_m)\},\,{\xi(x,T^k_m)\in(m-m^\alpha,m)}\right\}.
\end{align}
As we are going to see (in the proof of Lemma \ref{negligible}), if the event $M_m^k$ occurs, with very high probability the $(k+1)$-th favorite site of local time $m$ has to be one of the near-favorite points at time $T_m^k$. Hence the analysis is reduced to that of the size and location of near-favorite sites with certain local time gap.

To this end, we need the following propositions, which, with two rounds of ``screening'' on the size of the set $\mathcal{M}^k(\cdot,\cdot)$, provide (in \eqref{malpha2}) a probability bound on the appearance of near-favorite sites which is sufficient to rule out the i.o. occurrence of $M_m^4$.

Recall the definition of $\Theta(k,m,I)$ and $\Theta'(k,m,I)$ above Proposition \ref{Theta} and 
 write $$\Theta^k_m:=\Theta(k,m,[m-m^{\kappa_1}+1,m))\cup \Theta'(k,m,[m-m^{\kappa_1}+1,m))$$ for short.

\begin{pro}\label{bigalpha}
     Fixing $\bar{c}=\bar{c}(\varepsilon)>1$ satisfying \eqref{pbound1} below, let $c_0=\log (2\bar{c})$. Then for any $\varepsilon,C>0$, there exists $c=c(\varepsilon,C)>0$ such that for any $m,k\ge 1$
 and $\alpha\in[\kappa_1,4/5-\varepsilon]$, 
 \begin{align}\label{malpha1}
		\p\left(\#\mathcal{M}^k(m,\alpha)>Ce^{c_0m^{\alpha-\kappa_1}}(\log m)^2,M^k_m,\Pi^k_m\right)<e^{-c(\log m)^2}.
	\end{align}
\end{pro}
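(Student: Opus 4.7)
The proof will combine Proposition \ref{Theta}, which screens out sites with atypical external local time, with the urn-model tail bound of Lemma \ref{ty2}.

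\textbf{Step 1 (screening via Proposition \ref{Theta}).} I will partition $(m-m^\alpha, m]$ into $J := \lceil m^{\alpha-\kappa_1}\rceil$ consecutive sub-intervals $I_1,\ldots,I_J$ of length $m^{\kappa_1}$, with $I_1$ adjacent to $m$. Applying Proposition \ref{Theta} to each $I_j$ and taking a union bound, with probability at least $1-J\cdot e^{-cm^{1-2\kappa_1}}\ge 1-e^{-c'm^{1-2\kappa_1}}$, no site $x\in\mathcal{M}^k(m,\alpha)$ has external local time deviating from $\tfrac{15}{16}\xi(x,T_m^k)$ by more than $O(m^{1-\kappa_1})$ (in the $\widetilde{\xi}$ or $\widetilde{\xi}'$ decomposition according to the parity of $x$). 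The restriction $\alpha\le 4/5-\varepsilon$ enters here through the range of validity of Proposition \ref{Theta}.

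\textbf{Step 2 (conditional law and urn-model setup).} On this good event, I condition further on the $\sigma$-algebras generated by $\widetilde{S}_{[0,N_{T_m^k}]}$ (respectively $\widetilde{S}'_{[0,N'_{T_m^k}]}$) together with $L_m^1,\ldots,L_m^k$ and the event $M_m^k\cap\Pi_m^k$. By Proposition \ref{condilaw2} and its primed counterpart, the lazy local times $\xi_{\rm L}(\x,T_m^k)$ at pairs $\x$ not containing a favorite site are conditionally independent, each distributed proportionally to $p(\widetilde{\xi}(x),\cdot)$ truncated at $m-\widetilde{\xi}^+(\x)$; the hypothesis $\Pi_m^k$ ensures that each favorite site lies in its own pair of $\X$, so this decomposition is clean. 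Combining Lemma \ref{stirling01} with the concentration $|\widetilde{\xi}(x)-\tfrac{15}{16}\xi(x)|=O(m^{1-\kappa_1})$ from Step 1 then yields the uniform ratio bound
\[\bar{p}(\widetilde{\xi}(x),j)\le \bar{c}\cdot \bar{p}(\widetilde{\xi}(x),j-1),\quad j\in(m-m^\alpha,m],\]
where $\bar{c}=\bar{c}(\varepsilon)>1$ is precisely the constant in $c_0=\log(2\bar{c})$.

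\textbf{Step 3 (urn-model bound and assembly).} With the ratio bound in hand, Lemma \ref{ty2} applies to the conditional urn model in which balls are candidate pairs and urns are integer values of $\xi$, yielding an exponential tail on the count $N_j:=\#\{x\in\mathcal{M}^k(m,\alpha):\xi(x,T_m^k)\in I_j\}$ of the form $\p(N_j>h_j\mid\text{conditioning})\lesssim e^{-ch_j}$ (after suitable iteration). Choosing $h_j\asymp \bar{c}^{\,j}(\log m)^2$ and union bounding across the $J$ intervals, the total failure probability is at most $J\cdot e^{-c(\log m)^2}\le e^{-c'(\log m)^2}$ (since $\log J=O(\log m)=o((\log m)^2)$). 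On the complementary event,
\[\#\mathcal{M}^k(m,\alpha)=\sum_{j=1}^{J}N_j\lesssim \bar{c}^{\,J}(\log m)^2\le C\,e^{c_0J}(\log m)^2,\]
matching the stated bound. The main obstacle is verifying the monotone-ratio hypothesis of Lemma \ref{ty2} uniformly across the near-favorite window: for typical $\widetilde{\xi}(x)\approx \tfrac{15}{16}m$, the negative binomial $\bar{p}(\widetilde{\xi}(x),\cdot)$ has mean right around $m$, placing the truncation at its center, and ratios of nearby values must be controlled via Lemma \ref{stirling01} in a delicate regime at the boundary of the usual LCLT. The sharper concentration from Proposition \ref{Theta} (with exponent $1-\kappa_1<2/3$, itself relying on Proposition \ref{lower}) is exactly what keeps these ratios bounded by $\bar{c}$ across intervals of length $m^{\kappa_1}$; a weaker concentration (say at scale $m^{3/4}$) would not suffice.
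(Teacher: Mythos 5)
Your overall architecture — partition $(m-m^\alpha,m)$ into length-$m^{\kappa_1}$ blocks, screen $\widetilde\xi$ via Proposition \ref{Theta}, and compare consecutive blocks with a binomial argument — is essentially the paper's Lemma \ref{Card}, and your Step 2 correctly identifies the role of $\Pi^k_m$ and Proposition \ref{condilaw2}. The inductive comparison you sketch corresponds to the paper's \eqref{large_deviation1}: given $\Theta_\ell=\emptyset$, the count in block $I_\ell$ is conditionally dominated by a binomial on the total in $I_{\ell-1}\cup I_\ell$ with success probability $\bar c/(1+\bar c)$, so counts cannot grow faster than $2\bar c$ per block except on an exponentially small event.

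The gap is the base case $\ell=1$, which you never establish, and which is the most technical part of the argument. Your Step 3 asserts $\p(N_1>C(\log m)^2\mid\cdot)\lesssim e^{-c(\log m)^2}$ directly from Lemma \ref{ty2}, but Lemma \ref{ty2} controls the top nonempty urn $m$ only by comparing against urn $m+1$, which is constrained to be empty through the event $\{X_n=m\}$. In the local-time setting at time $T_m^k$, the would-be comparison block $I_0:=[m+1,m+m^{\kappa_1})$ is vacuously empty because stopping at $T_m^k$ forces $\xi(\cdot,T_m^k)\le m$ for every site; this is not a softly conditioned event but a structural constraint that entangles all the $\xi_{\rm L}(\x,T_m^k)$ through the stopping time. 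Conditioning on $\widetilde S_{[0,N_{T_m^k}]}$ via Proposition \ref{condilaw2} then gives negative-binomial laws truncated at $m$, and the naive Lemma \ref{ty2} comparison with $I_0$ is simply unavailable. The paper handles this (\eqref{Card2} and what follows) by enumerating the possible jump-chain paths $\eta$, looking at local times at the later time $N^{-1}_+(|\eta|)$ — where $\xi$ can exceed $m$ so that $I_0$ can genuinely be populated — running the binomial comparison per-path, and recombining via the disjointness argument \eqref{disjoint}. Without this anchor, the iteration $\#V^{(2)}(\ell)\le 2\bar c\,\#V^{(2)}(\ell-1)$ yields no bound at all, and the parenthetical ``after suitable iteration'' neither locates nor resolves the difficulty.
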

\red{Note that the statement includes the endpoint $\alpha=\kappa_1$, and the constant $c$ in \eqref{malpha1} 
is uniform over $\alpha\in[\kappa_1,4/5-\varepsilon]$.}
\begin{pro}\label{smallalpha}
    Uniformly in $m,k\ge 1$ and $\alpha\in (0,\kappa_1)$,
	\begin{align}\label{malpha2}
	\begin{aligned}
		\p\Big\{\mathcal{M}^k(m,\alpha)\neq \emptyset,\,&\Theta^k_m=\emptyset\,\Big\vert\,\#\mathcal{M}^k(m,{\kappa_1})\le (\log m)^2,\\
		&M^k_m,\,\widetilde{S}_{[0,N_{T^k_m}]},\,\left(L^j_m:j=1,\ldots,k\right)\Big\}\lesssim \frac{(\log m)^2}{m^{\kappa_1-\alpha}}.		
	\end{aligned}
	\end{align}
\end{pro}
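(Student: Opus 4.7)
The plan is to exploit the conditional independence from Proposition \ref{condilaw2} together with Lemma \ref{ty1}. Writing $\mathcal{G} := \sigma(\widetilde{S}_{[0,N_{T^k_m}]}, L^1_m,\ldots,L^k_m)$, Proposition \ref{condilaw2} tells us that under $\p(\cdot\mid \mathcal{G}, M^k_m)$, the variables $\{\xi_{\rm L}(\x, T^k_m)\}_{\x \in \X \setminus \{\x(L^j_m)\}}$ are independent, and for $\x = (x, x+{\bf e}_1)$ the law of $\xi_{\rm L}(\x, T^k_m)$ is proportional to $p(\widetilde{\xi}(x,N_{T^k_m}), \cdot)$ truncated at $m - \widetilde{\xi}^+(\x,N_{T^k_m})$. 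On $\{\Theta^k_m = \emptyset\}$, any site $x$ contributing to $\mathcal{M}^k(m, \kappa_1)$ forces $\widetilde{\xi}(x, N_{T^k_m})$ into the narrow interval $[\tfrac{15}{16}(m-m^{\kappa_1}+1) - c_*m^{1-\kappa_1},\, \tfrac{15}{16}m + c_*m^{1-\kappa_1}]$ (for even $x$), i.e.\ $|\tfrac{16}{15}\widetilde{\xi}(x) - m| = O(m^{1-\kappa_1})$. The analogous bound for odd $x$ via $\widetilde{\xi}'$, combined with the identity $\widetilde{\xi}(x+{\bf e}_1) - \widetilde{\xi}(x) = \xi(x+{\bf e}_1) - \xi(x)$ valid on $\x \notin \{\x(L^j_m)\}$, propagates goodness across the pair.

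The key quantitative input is the following comparability bound, obtained via Lemma \ref{stirling01}: for any good pair $\x$ and any $j_1, j_2 \in (m - m^{\kappa_1}, m]$,
\[
\frac{\bar{p}(\widetilde{\xi}(x), j_1)}{\bar{p}(\widetilde{\xi}(x), j_2)} = \exp\biggl(\frac{(j_1 - j_2)(j_1 + j_2 - \tfrac{32}{15}\widetilde{\xi}(x))}{2\sigma^2\widetilde{\xi}(x)} + o(1)\biggr) = \exp(O(1)),
\]
since $|j_1 - j_2| \le m^{\kappa_1}$, $|j_1 + j_2 - \tfrac{32}{15}\widetilde{\xi}(x)| = O(m^{1-\kappa_1})$ (by goodness), and $\widetilde{\xi}(x) = \Theta(m)$. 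The fact that $\kappa_1 < 1/2$ is essential in bounding the product $m^{\kappa_1}\cdot O(m^{1-\kappa_1}) = O(m)$ against the denominator $\Theta(m)$. This verifies the hypothesis $p_k \lesssim p_l$ required by Lemma \ref{ty1} with parameters $g_m = m^\alpha$, $f_m = m^{\kappa_1}$, and $J_m = (\log m)^2$.

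Treating each good pair as a ball and each local time value as an urn label, the conditioning $\#\mathcal{M}^k(m, \kappa_1) \le (\log m)^2$ matches the total count constraint $\sum F_k \le J_m$ of Lemma \ref{ty1}, while $M^k_m$ (the $k$ favorite sites already in urn $m$) plays the role of the $X_n = m$ condition. Summing the conclusion of Lemma \ref{ty1} over $j \ge 1$, and using $(g_m/f_m) J_m = (\log m)^2 m^{\alpha - \kappa_1} \to 0$, gives the target bound $(g_m/f_m) J_m = (\log m)^2/m^{\kappa_1-\alpha}$. The main obstacle is adapting Lemma \ref{ty1}, originally stated for i.i.d.\ balls, to our setting of non-identically distributed balls with distinct (but uniformly comparable) laws $\bar p(\widetilde{\xi}(x), \cdot)$; I expect this adaptation to proceed by the same decomposition as in the proof of Lemma \ref{ty1} (first conditioning on which and how many pairs land in the top-$f_m$, then bounding the number further landing in the top-$g_m$), using the conditional independence provided by Proposition \ref{condilaw2} together with the uniform ratio bound from the preceding paragraph in place of exchangeability. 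A secondary technical issue is handling the division by $\p(B\mid\mathcal{G}, M^k_m)$ when the global count condition is moved into the conditioning; this can be done by the standard trick of writing the numerator probability as a sum of products involving $N_\alpha(\x)$ for individual good pairs and exploiting that removing one pair from $B$ changes its probability by at most a constant factor.
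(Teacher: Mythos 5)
Your core ingredients match the paper's: Proposition~\ref{condilaw2} for the conditional independence of the lazy local times, the narrow-window constraint on $\widetilde\xi$ extracted from $\{\Theta^k_m=\emptyset\}$, and the ratio comparability of $\bar p(i,\cdot)$ from Lemma~\ref{stirling01}. However, you have made the proof look substantially harder than it is, and in the process you flag ``obstacles'' that the paper's argument simply sidesteps.

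The paper's proof is much more direct. After reducing to $\mathcal{M}_{\rm e}^k$ and $\mathcal{M}_{\rm o}^k$ separately via~\eqref{relation1} (a step you allude to via the pairing but do not carry out cleanly --- you try to ``propagate goodness across the pair'' rather than treat the parities separately, which is what Propositions~\ref{condilaw2} and~\ref{condilaw21} are actually designed for), the paper \emph{further conditions on the exact set} $\mathcal{M}^k(m,\kappa_1)$, not merely its cardinality. Under this richer conditioning, for each $x$ in that set the conditional probability of $\{x\in\mathcal{M}_{\rm e}^k(m,\alpha)\}$ becomes an explicit ratio $\sum_{j=m-m^\alpha+1}^{m-1}\bar p(\widetilde\xi(x),j)\big/\sum_{j=m-m^{\kappa_1}+1}^{m-1}\bar p(\widetilde\xi(x),j) \asymp m^{-(\kappa_1-\alpha)}$, and a union bound over the at most $(\log m)^2$ candidates finishes the proof. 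There is no need to ``adapt Lemma~\ref{ty1} to non-identically distributed balls,'' nor is there any troublesome division by $\p(B\mid\mathcal{G},M^k_m)$ to handle --- Lemma~\ref{ty1} is only a heuristic toy model here, as the paper itself says. The actual estimate is a one-line Bayes computation per site plus a union bound, not an exchangeability argument. Your proposal would likely still go through if pushed through, but the ``secondary technical issue'' you worry about and the ``main obstacle'' you identify are artifacts of an unnecessarily indirect route.

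One genuine error: you write that ``$\kappa_1<1/2$ is essential in bounding the product $m^{\kappa_1}\cdot O(m^{1-\kappa_1})=O(m)$ against the denominator $\Theta(m)$.'' That product equals $m$ \emph{for every} $\kappa_1$; no upper bound on $\kappa_1$ is used there. The constraint that actually matters at this step comes from the error term $O\big(|j-\tfrac{16}{15}i|^3/i^2\big)$ in Lemma~\ref{stirling01}: with $|j-\tfrac{16}{15}i|=O(m^{1-\kappa_1})$ and $i=\Theta(m)$ you need $3(1-\kappa_1)-2\le 0$, i.e.\ $\kappa_1\ge 1/3$ --- a \emph{lower} bound on $\kappa_1$, and exactly the reason the paper fixes $\kappa_1>1/3$ in~\eqref{eq:kappa1} and goes to such lengths in Propositions~\ref{lower} and~\ref{Theta} to shave the deviation exponent below $2/3$.
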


The proof of Propositions \ref{bigalpha} and \ref{smallalpha} is postponed to the next subsection. 

\begin{proof}[Proof of Proposition \ref{pro:finalreduction}]
Recall the notations defined in the beginning of Section \ref{se:1.1ub}. {Note that
\begin{align*}
    \quad\,\p\big(M^4_m,\abs{L^k_m-L^{k+1}_m}>e^m\text{ for some }k=1,2,3\big)\le \p\big(M^4_m,T^4_m>e^m\big)\overset{\eqref{Tkm1}}{<}e^{-cm}
\end{align*}
for some universal $c>0$.} Hence it suffices to show that uniformly for $(\alpha_k:k=1,2,3)\in {(\Lambda\cap [0,1])^3}$ and $m\ge 1$,
	\begin{align*}
		\p\left(M^4_m,\Pi^4_m,\,e^{m^{\alpha_k-\delta}}/3\le \abs{L^k_m-L^{k+1}_m}\le e^{m^{\alpha_k}}\text{ for }k=1,2,3\right)\lesssim \frac{1}{m^{3\kappa}}.
	\end{align*}
Here the ``$/3$'' in the event is only used to ensure that when $\alpha_k=\delta$, $e^{m^{\alpha_k-\delta}}/3=e/3<1$, so that all the possibilities of $\abs{L^k_m-L^{k+1}_m}$ are included.

We begin with a lemma, which states that the event that $m-\xi(L^{k+1}_m,T^k_m)$ is far greater than $\log \abs{L^k_m-L^{k+1}_m}$ is of negligible probability. The proof is postponed to the end of this subsection.
\begin{lem}\label{negligible}
	There exists $c=c(\delta)>0$ such that for any $m,k\ge 1$ and $\alpha\in \Lambda_0$,
	\[\p\Big( M^{k+1}_m,\Pi^{k+1}_m,\abs{L^k_m-L^{k+1}_m}\le e^{m^{\alpha}},\xi(L^{k+1}_m,T^k_m)< m-m^{\alpha+\delta}\Big) \le e^{-c(\log m)^2}.\]
\end{lem}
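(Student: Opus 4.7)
The plan is to apply the strong Markov property at $T^k_m$ and reduce to estimating the probability that the walk starting at $L^k_m$ and killed upon hitting $F := \{L^1_m,\ldots,L^k_m\}$ visits some nearby candidate site $y$ many times; then I control the ensuing union bound by partitioning the candidates according to their local time at $T^k_m$.

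First I would use Lemma \ref{Tkm} to restrict to $\{T^{k+1}_m\le\psi_m\}$ at cost $e^{-cm}$, and Proposition \ref{bigalpha} applied at level $\kappa_1$ together with Proposition \ref{Theta} to restrict further to the good events $\{\#\mathcal{M}^k(m,\kappa_1)\le C(\log m)^2\}\cap\{\Theta^k_m=\emptyset\}$, at additional cost $e^{-c(\log m)^2}+e^{-cm^{1-2\kappa_1}}$. Conditioning on $\mathcal F_m^k$ and denoting $\widehat S:= S_{T^k_m+\cdot}$, the event of the lemma forces some $y$ with $|y-L^k_m|\le e^{m^\alpha}$ and $\xi(y,T^k_m)<m-m^{\alpha+\delta}$ to be visited by $\widehat S$ at least $r_y := m-\xi(y,T^k_m)>m^{\alpha+\delta}$ times before $\widehat S$ hits $F$. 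Iterating the strong Markov at the successive visits of $\widehat S$ to $y$ yields
\[\p\!\left(\widehat S\text{ visits }y\ge r_y\text{ times before hitting }F\,\Big|\,\mathcal F_m^k\right)\le P_1(y)\,p(y)^{r_y-1},\]
where $P_1(y):=\p^{L^k_m}(\h_y<\h_F)\le 1$ and $p(y):=\p^y(\h_y<\h_F)$. Using the 2D hitting estimates \eqref{hitting}--\eqref{hitting2} (equivalently the potential kernel) together with $|y-L^k_m|\le e^{m^\alpha}$, one gets $1-p(y)\gtrsim 1/\log|y-L^k_m|\gtrsim m^{-\alpha}$ and hence $p(y)^{r_y-1}\le e^{-c r_y/m^\alpha}\le e^{-cm^\delta}$.

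The main obstacle is controlling the union bound over $y$: a naive count of $|D(L^k_m,e^{m^\alpha})|\asymp e^{2m^\alpha}$ candidates overwhelms the per-site bound $e^{-cm^\delta}$ once $\delta<\alpha$. To overcome this I would partition the candidate sites by $\xi(y,T^k_m)$ into three regimes. \emph{(i)} Unvisited sites ($\xi(y,T^k_m)=0$): then $r_y=m$, so $p(y)^{r_y-1}\le e^{-cm^{1-\alpha}}$, which dominates the ball-size factor since $\alpha\le\kappa_2<7/20<1/2$. \emph{(ii)} Lightly visited sites with $\xi(y,T^k_m)\le m/2$: at most $\psi_m$ of them, and the bound $r_y\ge m/2$ yields a super-exponentially small contribution since $m^{1-\alpha}\gg\log\psi_m\asymp\sqrt m$. \emph{(iii)} Near-favorite sites at the discrete levels $\alpha''\in\Lambda$ with $\alpha+\delta\le\alpha''\le 1-\delta$, i.e., $y\in\mathcal{M}^k(m,\alpha''+\delta)\setminus\mathcal{M}^k(m,\alpha'')$: the number of such $y$ is controlled either by Proposition \ref{bigalpha} (giving $\le Ce^{c_0 m^{\alpha''+\delta-\kappa_1}}(\log m)^2$ when $\alpha''+\delta\ge\kappa_1$) or, when $\alpha''+\delta<\kappa_1$, by combining Proposition \ref{smallalpha}'s bound $\lesssim (\log m)^2/m^{\kappa_1-\alpha''-\delta}$ on $\p(\mathcal{M}^k(m,\alpha''+\delta)\ne\emptyset)$ with the polylogarithmic bound on $\#\mathcal{M}^k(m,\kappa_1)$. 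The hypothesis $\alpha+\delta<\kappa_1$ (built into the choice of $\kappa_2$ and $\delta$) ensures that in both subcases the per-site bound $p(y)^{r_y-1}\le e^{-cm^{\alpha''-\alpha}}$ dominates the counting, so each level contributes at most $e^{-c'm^\delta}$. Summing over the $O(1/\delta)$ levels in the partition and absorbing the exceptional probabilities from Steps (i)--(ii) yields the desired bound $e^{-c(\log m)^2}$.
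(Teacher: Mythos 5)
Your approach is essentially the same as the paper's proof: partition the candidate sites $y$ by their local time deficit $r_y=m-\xi(y,T^k_m)$ into $O(1/\delta)$ ranges, bound the count in each range via Proposition \ref{bigalpha}, and bound the per-site probability $p(y)^{r_y-1}$ via the escape estimate $\p^y(\h_{L^k_m}<\h_y)\gtrsim 1/\log\abs{L^k_m-y}\gtrsim m^{-\alpha}$. The paper organizes the union bound through the stopping times $\sigma^j_i$ (successive first hits of new candidates in $F_j$) while you condition on $\mathcal{F}_m^k$ and sum directly over deterministic $y$, and your partition is finer (width $\delta$ in the exponent instead of $\kappa_1-\alpha-\delta$); both are sound, so this is a cosmetic difference.

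Two small corrections. (1) In regime (iii), when $\alpha''+\delta<\kappa_1$, your appeal to Proposition \ref{smallalpha} is a detour and cannot carry the estimate: that proposition only yields a polynomial-in-$m$ bound, which is far weaker than the target $e^{-c(\log m)^2}$. What you actually need is the observation that any such $y$ already lies in $\mathcal{M}^k(m,\kappa_1)$, so on the good event $\{\#\mathcal{M}^k(m,\kappa_1)\le C(\log m)^2\}$ furnished by Proposition \ref{bigalpha} at level $\kappa_1$, the count is polylogarithmic, and combined with the per-site bound $e^{-cm^{\alpha''-\alpha}}\le e^{-cm^\delta}$ this settles the regime. (2) Proposition \ref{bigalpha} only covers $\alpha''+\delta\in[\kappa_1,4/5-\varepsilon]$; for the deepest deficits ($\alpha''+\delta$ above $4/5-\varepsilon$, including $\alpha''$ close to $1$ since $r_y$ can be as small as just above $m^{\alpha+\delta}$ and as large as nearly $m/2$) you must fall back to the trivial count $\#D(L^k_m,e^{m^\alpha})\lesssim e^{2m^\alpha}$; this still loses against $e^{-cm^{\alpha''-\alpha}}$ because $\alpha\le\kappa_2<7/20<2/5$ forces $\alpha''-\alpha>\alpha$. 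The paper spells this case out (its $\beta_j>7/10$ case), while your sketch leaves it implicit and mislabels the top level as $1-\delta$.
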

Recall the representation $M_m^k=U^2_m\cap\cdots\cap U^k_m$ in \eqref{representation}.
Based on this lemma, Proposition \ref{Theta}, and Proposition \ref{bigalpha}, we need only to prove that for any $k=1,2,3$:
\begin{itemize}
	\item In the case $\alpha_k\in (\Lambda\cap [0,1])\setminus \Lambda_0$,
	\begin{align}\label{condi}
	\begin{aligned}
		\p\Big(&U^{k+1}_m,\Pi^{k+1}_m,e^{m^{\alpha_k-\delta}}/3\le \abs{L^k_m-L^{k+1}_m}\le e^{m^{\alpha_k}}\,\Big\vert\,
		\\&M^k_m,\Pi^k_m,\,e^{m^{\alpha_j-\delta}}/3\le \abs{L^j_m-L^{j+1}_m}\le e^{m^{\alpha_j}}\text{ for }j\le k-1\Big)\lesssim \frac{1}{m^{\kappa}};
	\end{aligned}
	\end{align}
	\item In the case $\alpha_k\in \Lambda_0$,
	\begin{align}\label{condi2}
	\begin{aligned}
		\p\Big(& U^{k+1}_m,\Pi^{k+1}_m,e^{m^{\alpha_k-\delta}}/3\le \abs{L^k_m-L^{k+1}_m}\le e^{m^{\alpha_k}},\,Q^k_m\,\Big\vert
		\\ &M^k_m,\Pi^k_m,\,e^{m^{\alpha_j-\delta}}/3\le \abs{L^j_m-L^{j+1}_m}\le e^{m^{\alpha_j}}\text{ for }j\le k-1\Big)\lesssim \frac{1}{m^{\kappa}},	
	\end{aligned}
	\end{align}
	where $Q^k_m:=\big\{\xi(L^{k+1}_m,T^k_m)\ge m-m^{\alpha_k+\delta},\,\Theta^k_m=\emptyset,\,\#M^k(m,m^{\kappa_1})\le (\log m)^2\big\}$.
\end{itemize}

	In the first case, we have
	\[\left\{U^{k+1}_m,e^{m^{\alpha_k-\delta}}/3\le \abs{L^k_m-L^{k+1}_m}\le e^{m^{\alpha_k}}\right\}\subset\left\{\h_0>\h_{D(0,e^{m^{\kappa_2}}/3)^c}\right\}\circ \theta_{T^k_m}.\]
	Then \eqref{condi} follows from the strong Markov property and \eqref{hitting}.
	
	In the second case, we note that
	\begin{align*}
		&\quad\,\left\{M^{k+1}_m,\Pi^{k+1}_m,\,e^{m^{\alpha_k-\delta}}/3\le \abs{L^k_m-L^{k+1}_m}\le e^{m^{\alpha_k}},\,Q^k_m\right\}\\
		&\subset \Big\{\h_0>\h_{D(0,e^{m^{\alpha_k-\delta}}/3)^c}\Big\}\circ \theta_{T^k_m}\\
  &\quad\,\cap\left\{\mathcal{M}^k(m,{\alpha_k+\delta})\neq \emptyset,\,\Theta^k_m=\emptyset,\,\#\mathcal{M}^k(m,{\kappa_1})\le (\log m)^2\right\}.
	\end{align*}
An application of the strong Markov property gives
\begin{align*}
	\text{LHS of }\eqref{condi2}\lesssim \frac{1}{m^{\alpha_k-\delta}}\p \Big(&\mathcal{M}^k(m,{\alpha_k+\delta})\neq \emptyset,\,\Theta^k_m=\emptyset,\,\#\mathcal{M}^k(m,{\kappa_1})\le (\log m)^2\,\Big\vert\\&M^k_m,\,\Pi^k_m,\,e^{m^{\alpha_j-\delta}}/3\le \abs{L^j_m-L^{j+1}_m}\le e^{m^{\alpha_j}}\text{ for }j\le k-1\Big).
\end{align*}
	Further using Proposition \ref{smallalpha} gives \eqref{condi2}.
\end{proof}

\begin{proof}[Proof of Lemma \ref{negligible}]
	For $x\in \z^2$ and $n,l\ge 0$, we define
	\[T(x,n,l):=\inf\{j\ge l:\xi(x,j)-\xi(x,l)=n\},\]
	i.e., the $n$-th hitting time of $x$ after time $l$. In this proof, we will frequently consider the events
	\[A(\sigma,\beta):=\left\{T(S_\sigma, m^\beta, \sigma)<\h_{L^k_m}(\sigma),\,\sigma<\infty\right\}\]
	for $\beta>0$ and various random times $\sigma\ge T^k_m$.
	
	Fixing $m,k,\alpha$, let $\beta_0=\alpha+\delta$, $\beta_j=\alpha+\delta+j(\kappa_1-\alpha-\delta)$ for $j=1,\ldots,[\frac{1-\alpha-\delta}{\kappa_1-\alpha-\delta}]+1$. We partition the event
	\[\left\{M^{k+1}_m,\Pi^{k+1}_m,\abs{L^k_m-L^{k+1}_m}\le e^{m^{\alpha}},\xi(L^{k+1}_m,T^k_m)< m-m^{\alpha+\delta}\right\}\]
	into disjoint events $B_j$ ($j=1,\ldots,[\frac{1-\alpha-\delta}{\kappa_1-\alpha-\delta}]+1$), where
	\begin{align*}
		&B_j=\left\{M^{k+1}_m,\Pi^{k+1}_m,\abs{L^k_m-L^{k+1}_m}\le e^{m^{\alpha}},m-m^{\beta_j}\le \xi(L^{k+1}_m,T^k_m)< m-m^{\beta_{j-1}}\right\}\\
		&\quad\,\subset \Big\{M^k_m,\Pi^k_m,A\big(\h_{L^{k+1}_m}(T^k_m),\beta_{j-1}\big),\abs{L^k_m-L^{k+1}_m}\le e^{m^{\alpha}},\xi(L^{k+1}_m,T^k_m)\ge m-m^{\beta_j}\Big\}\\
  &\quad\,\quad\,=:B'_j.
	\end{align*}
	We now show that  $\h_{L^{k+1}_m}(T^k_m)$ is contained in a collection of stopping times, which enables us to use the strong Markov property to calculate the probability. 
	

	Let us define the stopping times we are concerned with. For $j=1,\ldots, [\frac{1-\alpha-\delta}{\kappa_1-\alpha-\delta}]+1$, let
	 \begin{align*}
	 	F_j:=\left\{x\in \z^2:\abs{x-L^k_m}\le e^{m^{\alpha}},\,\xi(x,T^k_m)\ge m-m^{\beta_j}\right\}.
	 \end{align*}
	 We define $\sigma^j_i$ as follows:
	 \begin{align*}
		\sigma^j_1&:=\inf\left\{n\ge T^k_m:S_n\in F_j\right\},\\
		\sigma^j_{i+1}&:=\inf\left\{n\ge T^k_m:S_n\in F_j,\,S_n\notin\{S_{\sigma_1},\ldots,S_{\sigma_i}\}\right\}.
	\end{align*}
	Namely, $\sigma^j_i$ are the times after $T^k_m$ when the process hits a new point in $F_j$. Since $L^{k+1}_m\in F_j$ on $B'_j$, we have $\h_{L^{k+1}_m}(T^k_m)\in  \{\sigma^j_i:i\ge 1\}$ on $B'_j$. Consequently,
	\[B'_j\subset \bigcup_{i=1}^\infty\left\{A(\sigma^j_i,\beta_{j-1}),M^k_m,\Pi^k_m \right\}.\]

	Note that a.s., $\sigma^j_i<\infty$ for $i=1,\ldots, \# F_j$ and $\sigma^j_i=\infty$ for $i>\# F_j$. It follows from Proposition \ref{bigalpha} that for $j\in \big[1, [\frac{1-\alpha-\delta}{\kappa_1-\alpha-\delta}]+1\big]$ with $\beta_j\le \frac{7}{10}$,
	\begin{align*}
		&\quad\,\p\left(\bigcup_{i=e^{c_0m^{\beta_j-\kappa_1}}(\log m)^2+1}^\infty\left\{A(\sigma^j_i,\beta_{j-1}),M^k_m,\Pi^k_m \right\}\right)\\
  &\le \p\left(\# F_j> e^{c_0m^{\beta_j-\kappa_1}}(\log m)^2,M^k_m,\Pi^k_m\right)\\
	&\le \p\left(\#\mathcal{M}^k(m,m^{\beta_j})>e^{c_0m^{\beta_j-\kappa_1}}(\log m)^2,M^k_m,\Pi^k_m \right)\overset{\eqref{malpha1}}{<}e^{-c(\log m)^2}
	\end{align*}
for some universal constant $c>0$. 
For $j\in \big[1, [\frac{1-\alpha-\delta}{\kappa_1-\alpha-\delta}]+1\big]$ with $\beta_j>\frac{7}{10}$, it holds trivially that $\# F_j\le C e^{2m^\alpha}\le Ce^{2m^{\kappa_2}}<e^{c_0m^{\beta_j-\kappa_1}}(\log m)^2$ for $m$ large, which implies that 
\[\p\left(
\bigcup_{i=e^{c_0m^{\beta_j-\kappa_1}}(\log m)^2+1}^\infty\left\{A(\sigma^j_i,\beta_{j-1}),M^k_m,\Pi^k_m\right\}\right)\le e^{-c'(\log m)^2}\]
for some universal constant $c'>0$ and all $m\ge 1$, $j\in \big[1, [\frac{1-\alpha-\delta}{\kappa_1-\alpha-\delta}]+1\big]$ with $\beta_j>\frac{7}{10}$.

On the other hand, {let $g(y):=\p\left\{{T(0,m^{\beta_{j-1}},0)}<\h_y\right\}$ for $y\in \z^2$}. Then an application of the strong Markov property yields that for any $j=1,\ldots,[\frac{1-\alpha-\delta}{\kappa_1-\alpha-\delta}]+1$ and $i\ge 1$,
\begin{align*}
	\p\left(A(\sigma^j_i,\beta_{j-1})\right)&=\e\left[{g\big(L^k_m-S_{\sigma^j_i}\big)}1_{\big\{\sigma^j_i<\infty\big\}}\right]\\
	& \overset{(*)}{\le} \left(1-\frac{c_1}{m^{\alpha}}\right)^{m^{\beta_{j-1}}}\le \exp\left\{-c_1m^{\beta_{j-1}-\alpha}\right\}=\exp\left\{-c_1m^{\beta_{j}-\kappa_1+\delta}\right\},
\end{align*}
where $(*)$ follows from the fact that $\abs{L^k_m-S_{\sigma^j_i}}\le e^{m^\alpha}$.

Therefore for any $j=1,\ldots, [\frac{1-\alpha-\delta}{\kappa_1-\alpha-\delta}]+1$,
\begin{align*}
	\p\left(B'_j\right)&\le e^{-(c\wedge c')(\log m)^2}+\sum_{i=1}^{e^{c_0m^{\beta_j-\kappa_1}}(\log m)^2}\p\left(A(\sigma^j_i,\beta_{j-1})\right)\le e^{-c_2(\log m)^2}
\end{align*}
for some constant $c_2=c_2(\delta)>0$.
Note that $[\frac{1-\alpha-\delta}{\kappa_1-\alpha-\delta}]+1<(\kappa_1-\kappa_2-\delta)^{-1}+1<1/\delta+1$. Hence there exists $c_3=c_3(\delta)$ such that
\begin{align*}
	&\quad\,\p\left\{M^{k+1}_m,\Pi^{k+1}_m,\abs{L^k_m-L^{k+1}_m}\le e^{m^{\alpha}},\xi(L^{k+1}_m,T^k_m)< m-m^{\alpha+\delta}\right\}\\
	&\le \sum_{j=1}^{[\frac{1-\alpha-\delta}{\kappa_1-\alpha-\delta}]+1}\p\left(B'_j\right)\le e^{-c_3(\log m)^2}.
\end{align*}
Then the proof is completed. 
\end{proof}

\subsection{Proof of Propositions \ref{bigalpha} and \ref{smallalpha}}
This subsection is dedicated to the proof of Propositions \ref{bigalpha} and \ref{smallalpha}. We refer readers to Section \ref{tysec} for two variants of the urn model which serve as toy models of the arguments in this subsection. More precisely, the proof of \eqref{Me1} and \eqref{Me} will be accomplished using the same ideas as that in the proof of Lemmas \ref{ty2} and \ref{ty1}, respectively.

To ease the analysis, we define
\begin{align}\label{MeMo}
	\begin{aligned}
		\mathcal{M}_{\rm e}^k(m,\alpha)&:=\Big\{x\in \z^2_{\rm e}:\x(x)\in\X\setminus\{\x(L^1_m),\ldots,\x(L^k_m)\},\\
		&\quad\quad\,\xi(x,T^k_m)=\xi^+(\x(x),T^k_m)\in(m-m^\alpha,m)\Big\},\\
		\mathcal{M}_{\rm o}^k(m,\alpha)&:=\Big\{x\in \z^2_{\rm o}:\x(x)\in\X\setminus\{\x(L^1_m),\ldots,\x(L^k_m)\},\\
		&\quad\quad\,\xi(x,T^k_m)=\xi^+(\x(x),T^k_m)\in(m-m^\alpha,m)\Big\},		
	\end{aligned}
	\end{align}
 {which are related to $\mathcal{M}^k(m,\alpha)$ via:
 \begin{align}
     &\big\{\mathcal{M}^k(m,\alpha)\neq \emptyset\big\}=\big\{\mathcal{M}^k_{\rm e}(m,\alpha)\cup \mathcal{M}^k_{\rm o}(m,\alpha)\neq \emptyset\big\},\quad\text{and}\label{relation1}\\
     &\#\mathcal{M}^k(m,\alpha)\le 2\big[\#\mathcal{M}_{\rm e}^k(m,\alpha)+\# \mathcal{M}_{\rm o}^k(m,\alpha)\big].\label{relation2}
 \end{align}}

\begin{proof}[Proof of Proposition \ref{bigalpha}]  
Fixing $\varepsilon,C>0$, we simply write 
\[\rho=\rho(C,\alpha,m):=Ce^{c_0m^{\alpha-\kappa_1}}(\log m)^2.\] 
Recall \eqref{relation2}. 
We claim that it is enough to prove that there exists $c=c(\varepsilon,C)>0$ such that for all $m,k\ge 1$ and $\alpha\in[\kappa_1,4/5-\varepsilon]$,
\begin{align}\label{Me1}
	\p\left(\#\mathcal{M}_{\rm e}^k(m,\alpha)>\rho,M^k_m,\Pi^k_m\right)<e^{-c(\log m)^2}.
\end{align}
Indeed, we have $\#\mathcal{M}_{\rm o}^k(m,\alpha)=\#\mathcal{M}_{\rm e}^k(m,\alpha)\circ \theta_1$, $M^k_m=M^k_m\circ\theta_1$, and $\Pi^k_m=\Pi^k_m\circ\theta_1$ on $\{\xi(0,T^k_m)<m-m^\alpha\}$. Note that by Lemmas \ref{erdos_taylor} and \ref{Tkm},
\begin{align*}
	\p\left(\xi(0,T^k_m)\ge m-m^\alpha,M^k_m\right)&\le \p\left(\xi(0,e^{2m^{1/2}})\ge m-m^\alpha\right)+\p\left(T^k_m> e^{2m^{1/2}},M^k_m\right)\\&<e^{-c_1m^{1/2}}
\end{align*}
for some $c_1>0$. It follows that
\begin{align*}
	&\quad\,\p\left(\#\mathcal{M}_{\rm o}^k(m,\alpha)>\rho,M^k_m,\Pi^k_m\right)\\
	&\le \p\left(\#\mathcal{M}_{\rm e}^k(m,\alpha)>\rho,M^k_m,\Pi^k_m\right)+\p\left(\xi(0,T^k_m)\ge m-m^\alpha,M^k_m\right)\\
	&<e^{-c_2(\log m)^2}
\end{align*}
for some $c_2>0$, which leads to \eqref{malpha1}.

We now prove \eqref{Me1}. 
To estimate the probability, we shall first restate the above event in terms of the local times $\xi\big(\x,T^k_m\big)$ for $\x\in\X$. To this end, we define the following three types $V^{(i)}=V^{(i)}(\alpha,k,m)$ of $\x$:
\begin{align*}
	V^{(1)}&=\left\{\x\in\X:\xi^+(\x,T^k_m)=m,\xi^-(\x,T^k_m)<m\right\},\\
	V^{(2)}&=\left\{\x=(x,x+{\bf e}_1)\in\X:\xi^+(\x,T^k_m)=\xi(x,T^k_m)\in(m-m^\alpha,m)\right\},\\
	V^{(3)}&=\left\{\x=(x,x+{\bf e}_1)\in\X:\xi(x,T^k_m)\le m-m^\alpha\text{ or }\xi(x,T^k_m)<\xi(x+{\bf e}_1,T^k_m)<m\right\}.
\end{align*}
Then we have
\begin{align*}
 	\big\{\#\mathcal{M}_{\rm e}^k(m,\alpha)>\rho,M^k_m,\Pi^k_m\big\}=\left\{\#V^{(2)}>\rho,D^k_m\right\},
 \end{align*}
  where
  \begin{align*}
  	D^k_m:=\left\{\#V^{(1)}=k,V^{(1)}\cup V^{(2)}\cup V^{(3)}=\X\right\}.
  \end{align*}
Thus 
\begin{align}\label{restate}
	\mbox{ LHS of }\eqref{Me1}=\p\left(\# V^{(2)}>\rho,D^k_m\right).
\end{align}

Next, we shall further divide the elements in $\# V^{(2)}$ and estimate the cardinality of each part. Let
\begin{equation}\label{eq:Ildef}
I_\ell=[a_\ell,b_\ell):=[m-\ell m^{\kappa_1},m-(\ell-1)m^{\kappa_1})\text{ for }\ell=1,\ldots,[m^{\alpha-\kappa_1}]+1,    
\end{equation}
and divide the elements in $V^{(2)}$ into $([m^{\alpha-\kappa_1}]+1)$ disjoint parts: 
\begin{align*}
	V^{(2)}=\bigcup_{j=1}^{[m^{\alpha-\kappa_1}]+1}V^{(2)}(\ell)\text{ with }V^{(2)}(\ell):=\left\{\x\in V^{(2)}:\xi^+(\x,T^k_m)\in I_\ell\right\}.
\end{align*}

\begin{lem}\label{Card}
Let $\rho_\ell=\rho_\ell(C,m):=Ce^{c_0(\ell-1)}(\log m)^2$. There exists $c=c(\varepsilon,C)>0$ such that for all $m,k\ge 1$, $\alpha\in[\kappa_1,4/5-\varepsilon]$, and $\ell=1,\ldots,[m^{\alpha-\kappa_1}]+1$,
\begin{align}\label{Card1}
	\p\left(\# V^{(2)}(\ell)>\rho_\ell,D^k_m\right)<e^{-c (\log m)^2}.
\end{align}
\end{lem}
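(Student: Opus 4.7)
The strategy is a Chernoff-type urn-model comparison (in the spirit of Lemma \ref{ty2}) between adjacent intervals $I_{\ell-1}$ and $I_\ell$, iterated from the base $\ell = 1$. The prerequisites are three: a structural restriction on $\widetilde{\xi}(x, N_{T^k_m})$ via Proposition \ref{Theta}; conditional independence of lazy local times across pairs via Proposition \ref{condilaw2}; and a uniform ratio bound for $\bar{p}(i, j_1)$ versus $\bar{p}(i, j_2)$ with $j_1 \in I_{\ell-1}$, $j_2 \in I_\ell$, supplied by Lemma \ref{stirling01}; the latter will be the content of \eqref{pbound1}, which defines $\bar{c}$.

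First, I would apply Proposition \ref{Theta} to each $I_{\ell'}$, $\ell' \le \ell$. Since $I_{\ell'} \subset (m - m^{4/5-\varepsilon}, m]$ and $\ell \le m^{4/5 - \varepsilon - \kappa_1}$, a union bound shows that at a cost of $e^{-cm^{1-2\kappa_1}} \ll e^{-c(\log m)^2}$ we may assume that every pair $\x = (x, x+{\bf e}_1)$ with $\xi^+(\x, T^k_m) \in I_{\ell'}$ (and $\xi^+ = \xi(x)$) satisfies $\widetilde{\xi}(x, N_{T^k_m}) \in J_{\ell'} := [\tfrac{15}{16} a_{\ell'} - c_\star m^{1-\kappa_1}, \tfrac{15}{16} b_{\ell'} + c_\star m^{1-\kappa_1}]$. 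Conditioning on $\widetilde{S}_{[0, N_{T^k_m}]}$, $(L_m^j = y_j)_{j \le k}$, and $M_m^k$, Proposition \ref{condilaw2} then renders $(\xi_L(\x, T^k_m))_{\x \notin \{\x(y_j)\}}$ independent with explicit truncated distributions. The ratio bound \eqref{pbound1} then follows from Lemma \ref{stirling01} by writing
$$\log\frac{\bar{p}(i, j_2)}{\bar{p}(i, j_1)} = \frac{(j_2 - j_1)\bigl(j_1 + j_2 - \tfrac{32}{15}i\bigr)}{2\sigma^2 i} + o(1),$$
and noting $|j_2 - j_1| \lesssim m^{\kappa_1}$ and $|j_1 + j_2 - \tfrac{32}{15}i| \lesssim m^{1-\kappa_1}$ for $i \in J_{\ell-1} \cup J_\ell$, so the numerator is $O(m)$ while $2\sigma^2 i \asymp m$, yielding a universal constant $\bar{c} = \bar{c}(\varepsilon) > 1$.

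With \eqref{pbound1} and $|I_{\ell-1}| = |I_\ell| = m^{\kappa_1}$, each pair contributing to $V^{(2)}(\ell-1) \cup V^{(2)}(\ell)$ lies in $V^{(2)}(\ell)$ with conditional probability at most $\bar{c}/(1+\bar{c})$, independently across pairs. A Chernoff (Hoeffding) bound applied to $F := \#V^{(2)}(\ell-1) + \#V^{(2)}(\ell)$ then gives $\#V^{(2)}(\ell) \le 2\bar{c} \cdot \#V^{(2)}(\ell-1)$ with failure probability at most $e^{-c' F}$. Iterating from $\ell = 1$ and taking $c_0 = \log(2\bar{c})$ produces $\#V^{(2)}(\ell) \le e^{c_0(\ell-1)}\#V^{(2)}(1) \le \rho_\ell$, with accumulated failure probability at most $\ell \cdot e^{-c_1 (\log m)^2} \le e^{-c(\log m)^2}$ (using $\ell \le m^{4/5}$).

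The main obstacle I expect is the base case $\ell = 1$: since $\{m\}$ is a singleton while $I_1$ has length $m^{\kappa_1}$, the ratio $p_{\{m\}}/p_{I_1}$ is not uniformly bounded, so the inductive comparison does not directly close the recursion at $\ell = 1$. This step will require a separate device, plausibly splitting candidate pairs by sub-range of $\widetilde{\xi}$ and applying Lemma \ref{ty2} within each sub-population for which $I_1$ plays the role of the effective max urn, together with the truncation $\xi_L < m - \widetilde{\xi}^+(\x, N_{T^k_m})$ built into Proposition \ref{condilaw2}.
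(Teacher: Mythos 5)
Your outline of the inductive step $\ell\mapsto\ell+1$ matches the paper essentially exactly: screen with Proposition~\ref{Theta} so that $\widetilde\xi(x,N_{T^k_m})$ lies in the window $\left[\tfrac{15}{16}a_\ell-c_*m^{1-\kappa_1},\tfrac{15}{16}b_{\ell-1}+c_*m^{1-\kappa_1}\right)$, use Proposition~\ref{condilaw2} for conditional independence across pairs, derive the ratio bound \eqref{pbound1} from Lemma~\ref{stirling01}, and then run a binomial Chernoff bound on $F=\#V^{(2)}(\ell-1)+\#V^{(2)}(\ell)$ with success probability $\le\bar c/(1+\bar c)$. Your derivation of the constant $\bar c$ is also correct. (One small point you glossed over: the Chernoff bound gives $e^{-c'F}$, and one must first argue that $F>\rho_\ell$ on the bad event, which the paper does by splitting on whether $\#V^{(2)}(\ell-1)>\rho_{\ell-1}$ before applying the conditional bound. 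This is a bookkeeping issue, not a conceptual one.)

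You correctly flagged that the whole scheme rests on the base case $\ell=1$, and you correctly diagnosed why it cannot be reduced to the inductive step: there is no comparison interval above $I_1$, because on $D^k_m\subset M^k_m$ no site other than $L^1_m,\dots,L^k_m$ can have local time $\ge m$ at time $T^k_m$. However, your proposed fix --- partitioning candidate pairs by sub-range of $\widetilde\xi$ --- does not create such a population and would not close the gap. The paper's actual device is to manufacture the missing ``higher urn'' by shifting the observation time. After introducing the event $\Psi^k_m=\{S_{T^k_m+1}=S_{T^k_m}+{\bf e}_2\}$ (so that $L^k_m=\widetilde S_{N_{T^k_m}}$), one conditions on the realized path $\eta=\widetilde S_{[0,N_{T^k_m}]}$ and looks at the later time $N^{-1}_+(|\eta|)$, which is the time when \emph{all} pending lazy two-step excursions at the recorded external visits have completed. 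At that time other sites may overshoot $m$ purely by their own lazy contribution, and the paper defines the virtual interval $I_0=[m+1,m+m^{\kappa_1})$ and the set $V^{(2)}_\eta(0)$ of pairs whose local time at $N^{-1}_+(|\eta|)$ lands in $I_0$. The ratio bound then compares $V^{(2)}_\eta(1)$ against $V^{(2)}_\eta(0)$ in exactly the spirit of Lemma~\ref{ty2}. The final nontrivial ingredient, which is not in your proposal at all, is the disjointness of the events $B_\eta$ over distinct $\eta$ (proved by counting $\#\{\x:\xi^+(\x,\cdot)\ge m\}$ at times $N^{-1}_+(|\eta|)-1$ and $N^{-1}_+(|\eta|)$), which is what permits summing the conditional bounds over all $\eta$ without a union-bound loss. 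Without both the time-shift to $N^{-1}_+$ and the disjointness argument, the base case does not go through, so the gap in your proof is genuine.
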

The proof of Lemma \ref{Card} is postponed. Back to \eqref{restate}, using Lemma \ref{Card} and the equality that $\# V^{(2)}=\sum_{\ell=1}^{[m^{\alpha-\kappa_1}]+1} \# V^{(2)}(\ell)$, we have with $\rho_\ell'=\rho_\ell\big((1-e^{-c_0})C,m\big)$,
$$
	\mbox{ LHS of }\eqref{Me1}\le \sum_{\ell=1}^{[m^{\alpha-\kappa_1}]+1}\p\left\{\# V^{(2)}(\ell)>\rho_\ell' ,D^k_m\right\}<\sum_{\ell=1}^{[m^{\alpha-\kappa_1}]+1} e^{-c_1(\log m)^2}<e^{-c_2 (\log m)^2}
$$
for some $c_1=c_1(\varepsilon,C),c_2=c_2(\varepsilon,C)>0$ and all $m,k\ge 1$ and $\alpha\in[\kappa_1,4/5-\varepsilon]$.
That concludes the proof.
\end{proof}

Now we prove Lemma \ref{Card}. The main idea is as follows. We simply write $\Theta_\ell:=\Theta(k,m,I_{\ell-1}\cup I_\ell)$ which equals $\emptyset$ with high probability by Proposition \ref{Theta}. Note that on $\{\Theta_\ell=\emptyset\}$,
\begin{align}\label{V2}
\begin{aligned}
	&\quad\,V^{(2)}(\ell)\cup V^{(2)}(\ell-1)\\
	&\subset\left\{x\in \z^2_{\rm e}:\widetilde{\xi}(x,N_{T^k_m}){=\widetilde{\xi}^+(\x(x),N_{T^k_m})}\in \left[\frac{15}{16}a_{\ell}-c_* m^{1-\kappa_1},\frac{15}{16}b_{\ell-1}+c_*m^{1-\kappa_1}\right)\right\}\\
 &\quad\,=:\widetilde{V}^{(2)}(\ell)	
\end{aligned}
\end{align}
for each $\ell=2,\ldots,[m^{\alpha-\kappa_1}]+1$. It can be shown that conditionally on $\widetilde{\xi}(\cdot,N_{T^k_m})$ and $\Theta_\ell=\emptyset$, independently for each $x\in \widetilde{V}^{(2)}(\ell)$, the probabilities of $\{x\in V^{(2)}(\ell)\}$ and $\{x\in V^{(2)}(\ell-1)\}$ are comparable, which together with \eqref{V2} and the large deviation principle implies that with high probability,  $\#V^{(2)}(\ell)$ is bounded by a constant multiple of $\#V^{(2)}(\ell-1)$ for $\ell=2,\ldots,[m^{\alpha-\kappa_1}]+1$. This gives \eqref{Card1} in the case $\ell\ge 2$ (assuming the case $\ell=1$). 

The proof for the case $\ell=1$ is similar yet more involved due to the fact that $\xi(x,T^k_m)\le m$ on $M^k_m$ \red{which prevents us from making the same comparison at time $T^k_m$.} To overcome this, we enumerate all the possibilities of the path $\widetilde{S}_{[0,N_{T^k_m}]}$ and compare the probabilities at time $N^{-1}_+(n)$ on $\{N_{T^k_m}=n\}$. 

Let us start with the following property of $\bar{p}(i,j)$ which is immediate from Lemma \ref{stirling01}. Recall the definition of $I_l$, $a_l$ and $b_l$ from \eqref{eq:Ildef}.
\begin{lem}\label{pbound}
	For any $\varepsilon>0$, there exists $\bar{c}=\bar{c}(\varepsilon)>1$ such that for all $\alpha\in[\kappa_1,4/5-\varepsilon]$, $\ell\in \big[1,[m^{\alpha-\kappa_1}]+1\big]$, $i\in \left[\frac{15}{16}a_{\ell}-c_* m^{1-\kappa_1},\frac{15}{16}b_{\ell-1}+c_*m^{1-\kappa_1}\right]$, and {$j_1,j_2\in I_\ell=[a_\ell,b_{\ell})$,
	\begin{align}\label{pbound1}
		\bar{c}^{-1}\bar{p}(i,j_2)\le \bar{p}(i,j_1)\le \bar{c}\bar{p}(i,j_2).
	\end{align}}
\end{lem}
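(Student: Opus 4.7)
The plan is to apply the local central limit theorem of Lemma \ref{stirling01} and track two elementary scales: the deviation $u_k := j_k - \tfrac{16}{15} i$ from the mean and the length of $I_\ell$.

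First I would collect size estimates. The interval $I_\ell = [a_\ell, b_\ell)$ has length exactly $m^{\kappa_1}$, so $|j_1 - j_2| \le m^{\kappa_1}$. Since $b_{\ell-1} - a_\ell = 2 m^{\kappa_1}$, the hypothesis on $i$ yields
\[
|u_k| \le 2m^{\kappa_1} + \tfrac{16}{15} c_* m^{1-\kappa_1} \lesssim m^{1-\kappa_1},
\]
where the dominance of $m^{1-\kappa_1}$ over $m^{\kappa_1}$ uses $\kappa_1 < 7/20 < 1/2$. Moreover, since $\alpha \le 4/5 - \varepsilon < 1$, one has $a_\ell \ge m - m^{\alpha} - m^{\kappa_1} \gtrsim m$, hence $i = \Theta(m)$ uniformly in $\ell$, and in particular $|u_k| = o(i)$, so Lemma \ref{stirling01} does apply.

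Next I would take the logarithm of the ratio and invoke the LCLT expansion. The Gaussian contribution, after factoring the difference of squares, satisfies
\[
\frac{|u_1^2 - u_2^2|}{2 \sigma^2 i} = \frac{|u_1 - u_2|\,|u_1 + u_2|}{2\sigma^2 i} \lesssim \frac{m^{\kappa_1} \cdot m^{1-\kappa_1}}{m} = O(1),
\]
while the error terms in Lemma \ref{stirling01} satisfy $i^{-1/2} = o(1)$ and
\[
\frac{|u_k|^3}{i^2} \lesssim \frac{m^{3(1-\kappa_1)}}{m^2} = m^{1 - 3\kappa_1} = o(1),
\]
the latter using $\kappa_1 > 1/3$. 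Combining these, $\bigl|\log \bigl(\bar p(i, j_1)/\bar p(i, j_2)\bigr)\bigr| \le \bar C$ uniformly in the allowed parameters, so \eqref{pbound1} holds with $\bar c := e^{\bar C}$.

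There is no real obstacle; the step that requires care is simply the bookkeeping of exponents, and this is where the calibration $\kappa_1 \in (1/3, 7/20)$ is paying off. The bound $\kappa_1 < 1/2$ makes $m^{1-\kappa_1}$ the leading scale for $|u_k|$, so that the product $|u_1 - u_2|\,|u_1 + u_2| \lesssim m^{\kappa_1} \cdot m^{1-\kappa_1} = m$ precisely matches the denominator $i \asymp m$, making the Gaussian term $O(1)$; and $\kappa_1 > 1/3$ is exactly what forces the cubic LCLT error to decay. The dependence of $\bar c$ on $\varepsilon$ enters only through the requirement that $\alpha$ be bounded away from $1$ in order to guarantee $i \gtrsim m$ uniformly.
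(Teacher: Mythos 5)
Your proof is correct and fills in exactly the computation the paper regards as ``immediate from Lemma \ref{stirling01}.'' You apply the same LCLT expansion, track the deviation $u_k = j_k - \tfrac{16}{15}i$ and the interval length, and correctly identify that $\kappa_1 < 1/2$ forces $m^{1-\kappa_1}$ to dominate $|u_k|$, while $\kappa_1 > 1/3$ kills the cubic error term; the bookkeeping of all constants (interval lengths $m^{\kappa_1}$ and $2m^{\kappa_1}$, the factor $\tfrac{16}{15}c_* m^{1-\kappa_1}$, and $i\asymp m$ via $\alpha\le 4/5-\varepsilon$) is accurate.
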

\red{Note that here (and in a similar argument below, see \eqref{eq:seesaw}), if we choose a larger range for the possible values of $j_\cdot$, then such an up-to-constants bound will have to fail.}
\begin{proof}[Proof of Lemma \ref{Card}]
First, we assume \eqref{Card1} for the case $\ell=1$, i.e.,
there exists $c=c(\varepsilon,C)>0$ such that for all $m,k\ge 1$ and $\alpha\in[\kappa_1,4/5-\varepsilon]$, 
\begin{align}\label{Card2}
	\p\left(\# V^{(2)}(1)>C(\log m)^2,D^k_m\right)<e^{-c (\log m)^2}.
\end{align}
We shall prove \eqref{Card1} under the assumption \eqref{Card2}.

 The proof uses the same ideas as that in the proof of Lemma \ref{ty2}. We claim that there exists $c>0$ such that for any $\ell=2,\ldots,[m^{\alpha-\kappa_1}]+1$, 
 \begin{align}\label{large_deviation1}
 \begin{aligned}
 	&\p\left(\# V^{(2)}(\ell)>2\bar{c}\# V^{(2)}(\ell-1),D^k_m,\Theta_\ell=\emptyset\,\Big\vert\, \#V^{(2)}(\ell)+\#V^{(2)}(\ell-1)\right)\\
 	&\le e^{-c\big[\#V^{(2)}(\ell)+\#V^{(2)}(\ell-1)\big]}. 
 \end{aligned}
 \end{align}
 Indeed, observe that conditionally on $V^{(2)}(\ell)\cup V^{(2)}(\ell-1)$, $D^k_m$, and $\Theta_\ell=\emptyset$, \eqref{V2} holds and the events $\{x\in V^{(2)}(\ell)\}$, for $x\in V^{(2)}(\ell)\cup V^{(2)}(\ell-1)$, are independent with probability
 \begin{align*}
	\frac{\sum_{j\in I_\ell}\bar{p}(\widetilde{\xi}(x,\abs{\eta}),j)}{\sum_{j\in I_\ell}\bar{p}(\widetilde{\xi}(x,\abs{\eta}),j)+\sum_{j\in I_{\ell-1}}\bar{p}(\widetilde{\xi}(x,\abs{\eta}),j)}\overset{\eqref{pbound1}}{\le} \frac{\bar{c}}{1+\bar{c}}.
\end{align*}
Therefore under the conditioning, $\#V^{(2)}(\ell)$ is stochastically bounded by a binomial random variable with parameter $\big(\#V^{(2)}(\ell)+\#V^{(2)}(\ell-1),\frac{\bar{c}}{1+\bar{c}}\big)$. Then \eqref{large_deviation1} follows from the large deviation principle.

Recall the definition of $\rho_\ell$ in the statement of Lemma \ref{Card} {and that $c_0=\log(2\bar{c})$. In particular, we have $\rho_\ell=2\bar{c}\rho_{\ell-1}$.} Fixing $C>0$, we write \[q_\ell:=\p\big(\#V^{(2)}(\ell)>\rho_\ell,D^k_m\big).\]
Then for any $\ell=2,\ldots,[m^{\alpha-\kappa_1}]+1$, 
\begin{align*}
	q_\ell&\le \p\left(\#V^{(2)}(\ell)>\rho_\ell,\#V^{(2)}(\ell-1)>\rho_{\ell-1},D^k_m\right)\\
	&\quad\,+\p\left(\#V^{(2)}(\ell)>\rho_\ell,\#V^{(2)}(\ell-1)\le \rho_{\ell-1},D^k_m,\Theta_\ell=\emptyset\right)+\p\left(\Theta_\ell\neq \emptyset\right)\\
	&\le q_{\ell-1}+\p\left(\Theta_\ell\neq \emptyset\right)\\
	&\quad\,+\p\left(\#V^{(2)}(\ell)>2\bar{c}\#V^{(2)}(\ell-1),\#V^{(2)}(\ell)+\#V^{(2)}(\ell-1)>\rho_{\ell},D^k_m,\Theta_\ell=\emptyset\right)\\
	&\le q_{\ell-1}+e^{-c_1m^{1-2\kappa_1}}+e^{-c \rho_{\ell}}
\end{align*}
for some $c_1=c_1(\varepsilon)>0$ by Proposition \ref{Theta} and \eqref{large_deviation1}. Further using \eqref{Card2}, we get
\[q_\ell\le q_1+\sum_{j=2}^{\ell}(e^{-c\rho_j}+e^{-c_1m^{1-2\kappa_1}})<e^{-c_2(\log m)^2}\]
for some $c_2=c_2(\varepsilon,C)>0$ \red{and all $\ell=2,\ldots, [m^{\alpha-\kappa_1}]+1$}, which concludes this part of the proof.

Next, we shall prove \eqref{Card2}. Let $\Psi^k_m:=\{S_{T^k_m+1}=S_{T^k_m}+{\bf e}_2\}$ with ${\bf e}_2=(0,1)$. 
By the strong Markov property, Proposition \ref{Theta}, {and the observation that $D^k_m\subset M^k_m$}, it suffices to show that
\begin{align}\label{Card3}
	\p\left(\# V^{(2)}(1)>C(\log m)^2,D^k_m,\Psi^k_m,\Theta^k_m=\emptyset\right)<e^{-c (\log m)^2}.
\end{align}
Here the introduction of $\Psi^k_m$ ensures that $L^k_m=\widetilde{S}_{N_{T^k_m}}$, which simplifies the following proof.

Define $I_0=[a_0,b_0):=[m+1,m+m^{\kappa_1})$. We use $\eta=(\eta_0,\eta_1,\ldots,\eta_n)$ to denote a generic nearest-neighbor path in $\z^2$ with finite length $\abs{\eta}=n$. Let $\eta_{\rm end}:=\eta_n$ be the ending point of the path. 

The notation $V^{(j)}_\eta$ for $j=1,2,3$ is defined through replacing $T^k_m$ in the definition of $V^{(j)}$ by $N^{-1}_+(\abs{\eta})$. Let
\[V^{(2)}_\eta(\ell):=\left\{\x=(x,x+{\bf e}_1)\in\X:\xi^+(\x,N^{-1}_+(\abs{\eta}))=\xi(x,N^{-1}_+(\abs{\eta}))\in I_\ell\right\}\]
for $\ell=0,1,\ldots,[m^{\alpha-\kappa_1}]+1$ (here we include the case $\ell=0$) and
\begin{align*}
	D_\eta:=\left\{\#V^{(1)}_\eta=k,V^{(1)}_\eta\cup V^{(2)}_\eta\cup V^{(3)}_\eta=\X,\xi\big(\eta_{\rm end},N^{-1}_+(\abs{\eta})\big)=m\right\}.
\end{align*}
{In particular, we have
\begin{align}\label{D_eta}
    \x(\eta_{\rm end})\in V^{(1)}_\eta\text{ on }D_\eta.
\end{align}}

In the sequel, we mainly focus on the even points in each $\x\in V^{(2)}(\ell)$, i.e. $\big\{x\in \z^2_{\rm e}:\x(x)\in V^{(2)}(\ell)\big\}$. By abuse of notation, we continue to write $V^{(2)}(\ell)$ for this set.

 With the above notations, we have for any $r>C(\log m)^2$,
 \begin{align}\label{V_eta}
 \begin{aligned}
	&\quad\,\p\left(\# V^{(2)}(1)=r,D^k_m,\Psi^k_m,\Theta^k_m=\emptyset\right)\\
	&=\sum_{\eta}\p\left(\# V^{(2)}(1)=r,D^k_m,\Theta^k_m=\emptyset,\Psi^k_m,\widetilde{S}_{[0,N_{T^k_m}]}=\eta\right)\\
	&\le\sum_{\eta}\p\left(\# V^{(2)}_\eta(1)=r,D_\eta,\theta(\abs{\eta},m,\kappa_1)=\emptyset,\widetilde{S}_{[0,\abs{\eta}]}=\eta\right),
\end{aligned}
\end{align}
where the sum is over all the paths with finite length and
\begin{align*}
    \theta(n,m,\kappa_1):=\Big\{&x\in \z^2_{\rm e}:\xi\big(x,N^{-1}_+(n)\big)\in {I_0\cup I_1},\\
&\widetilde{\xi}(x,n)\notin \Big[\frac{15}{16}(m-m^{\kappa_1})-c_*m^{1-\kappa_1},\frac{15}{16}m+c_*m^{1-\kappa_1}\Big)\Big\}.
\end{align*}
We mention that in the definition of $\theta(n,m,\kappa_1)$, the enlargement of the range of $\xi(x,n)$ from $I_1=(m-m^{\kappa_1},m)$ to $I_0\cup I_1$ is prepared for the later proof.

Now we use the same ideas as that in the proof of Lemma \ref{ty2}. With \eqref{D_eta} in mind, let
\begin{align*}
    \widetilde{D}_\eta:=\Big\{&\#V^{(1)}_\eta=k,V^{(1)}_\eta\cup V^{(2)}_\eta\cup V^{(3)}_\eta\cup V^{(2)}_\eta(0)=\X,\\
    &\xi\big(\eta_{\rm end},N^{-1}_+(\abs{\eta})\big)=m,{\x(\eta_{\rm end})\in V^{(1)}_\eta}\Big\}.
\end{align*}
Then we can write
\begin{align}\label{compare3}
\begin{aligned}
	&\quad\,\p\left(\# V^{(2)}_\eta(1)=r,D_\eta,\theta(\abs{\eta},m,\kappa_1)=\emptyset,\widetilde{S}_{[0,\abs{\eta}]}=\eta\right)\\
	&= \p\Big(\#V^{(2)}_\eta(1)=r,\#V^{(2)}_\eta(0)+\#V^{(2)}_\eta(1)=r,\widetilde{D}_\eta,\theta(\abs{\eta},m,\kappa_1)=\emptyset,\widetilde{S}_{[0,\abs{\eta}]}=\eta\Big).	
\end{aligned}
\end{align}
It follows from Proposition \ref{condilaw1} that conditionally on $V^{(2)}_\eta(0)\cup V^{(2)}_\eta(1)$,  $\widetilde{D}_\eta$, $\theta(\abs{\eta},m,\kappa_1)=\emptyset$, and $\widetilde{S}_{[0,\abs{\eta}]}=\eta$, we have 
\begin{align*}
\begin{aligned}
	V^{(2)}_\eta(0)\cup V^{(2)}_\eta(1)\subset\Big\{&x\in \z^2_{\rm e}:\widetilde{\xi}(x,\abs{\eta}){=\widetilde{\xi}^+(\x(x),\abs{\eta})}\\&\in \Big[\frac{15}{16}(m-m^{\kappa_1})-c_*m^{1-\kappa_1},\frac{15}{16}m+c_*m^{1-\kappa_1}\Big)\Big\}
\end{aligned}
\end{align*}
and the events $\big\{x\in V^{(2)}_\eta(1)\big\}$, for $x\in V^{(2)}_\eta(0)\cup V^{(2)}_\eta(1)$, are independent with probability
\begin{align*}
	\frac{\sum_{j\in I_1}\bar{p}(\widetilde{\xi}(x,\abs{\eta}),j)}{\sum_{j\in I_1}\bar{p}(\widetilde{\xi}(x,\abs{\eta}),j)+\sum_{j\in I_0}\bar{p}(\widetilde{\xi}(x,\abs{\eta}),j)}\le \frac{\bar{c}}{1+\bar{c}}.
\end{align*}
In particular, let
\[\q=\q(r):=\p\left(\,\cdot\,\Big\vert\,\#V^{(2)}_\eta(0)+\#V^{(2)}_\eta(1)=r,\widetilde{D}_\eta,\theta(\abs{\eta},m,\kappa_1)=\emptyset,\widetilde{S}_{[0,\abs{\eta}]}=\eta\right).\]
Then by the Stirling's formula,
\begin{align*}
	\q\left(\#V^{(2)}_\eta(1)=r\right)&\le O\left(\frac{1}{\sqrt{r}}\right)\left(\frac{\bar{c}}{1+\bar{c}}\right)^r \q\left(\#V^{(2)}_\eta(1)=\left[\frac{\bar{c}}{1+\bar{c}}r\right]\right)\\
	&\le e^{-c_2 r}\q\left(\#V^{(2)}_\eta(1)=\left[\frac{\bar{c}}{1+\bar{c}}r\right]\right)
\end{align*}
for some $c_2>0$.
Consequently, 
\begin{align}\label{compare4}
\begin{aligned}
	&\quad\,\mbox{ RHS of }\eqref{V_eta}\\
	&=\sum_\eta\q\left(\#V^{(2)}_\eta(1)=r\right)\p\Big(\#V^{(2)}_\eta(0)+\#V^{(2)}_\eta(1)=r,\widetilde{D}_\eta,\theta(\abs{\eta},m,\kappa_1)=\emptyset,\widetilde{S}_{[0,\abs{\eta}]}=\eta\Big)\\
	&\le e^{-c_2 r}\sum_\eta  \q\left(\#V^{(2)}_\eta(1)=\left[\frac{\bar{c}}{1+\bar{c}}r\right]\right)\\
	&\quad\,\cdot \p\Big(\#V^{(2)}_\eta(0)+\#V^{(2)}_\eta(1)=r,\widetilde{D}_\eta,\theta(\abs{\eta},m,\kappa_1)=\emptyset,\widetilde{S}_{[0,\abs{\eta}]}=\eta\Big)\\
	&\le e^{-c_2 r}\sum_\eta \p\Big(\#V^{(2)}_\eta(1)=\left[\frac{\bar{c}}{1+\bar{c}}r\right],\#V^{(2)}_\eta(0)=r-\left[\frac{\bar{c}}{1+\bar{c}}r\right],\widetilde{D}_\eta,\widetilde{S}_{[0,\abs{\eta}]}=\eta\Big)\\
	&\overset{(**)}{\le} e^{-c_2 r},	
\end{aligned}
\end{align}
where $(**)$ follows from the fact that the events $B_\eta:=\Big\{\#V^{(2)}_\eta(1)=\left[\frac{\bar{c}}{1+\bar{c}}r\right],\#V^{(2)}_\eta(0)=r-\left[\frac{\bar{c}}{1+\bar{c}}r\right],\widetilde{D}_\eta,\widetilde{S}_{[0,\abs{\eta}]}=\eta\Big\}$ are disjoint, i.e.
\begin{align}\label{disjoint}
	B_{\eta_1}\cap B_{\eta_2}=\emptyset\text{ if }\eta^1\neq \eta^2.
\end{align}
Indeed, \eqref{disjoint} trivially holds when $\eta^1\neq \eta^2$ and $\abs{\eta^1}=\abs{\eta^2}$. Next, we consider the case $\abs{\eta^1}<\abs{\eta^2}$. Note that on $B_\eta$,
\begin{align*}
	\left\{\x\in\X:\xi^+(\x,N^{-1}_+(\abs{\eta})-1)\ge m\right\}&=\big(V^{(1)}_\eta\setminus \{\x(\eta_{\rm end})\}\big)\cup V^{(2)}_\eta(0),\\
	\left\{\x\in\X:\xi^+(\x,N^{-1}_+(\abs{\eta}))\ge m\right\}&=V^{(1)}_\eta\cup V^{(2)}_\eta(0).
\end{align*}
In particular,
\begin{align*}
	\#\left\{\x\in\X:\xi^+(\x,N^{-1}_+(\abs{\eta})-1)\ge m\right\}&=k+r-\left[\frac{\bar{c}}{1+\bar{c}}r\right]-1,\\
	\#\left\{\x\in\X:\xi^+(\x,N^{-1}_+(\abs{\eta}))\ge m\right\}&=k+r-\left[\frac{\bar{c}}{1+\bar{c}}r\right].
\end{align*}
In the case $\abs{\eta^1}<\abs{\eta^2}$, we have $N^{-1}_+(\abs{\eta^1})\le N^{-1}_+(\abs{\eta^2})-1$. Then the above analysis tells us that on $B_{\eta^1}\cap B_{\eta^2}$,
\begin{align*}
	k+r-\left[\frac{\bar{c}}{1+\bar{c}}r\right]&=\#\left\{\x\in\X:\xi^+(\x,N^{-1}_+(\abs{\eta^1}))\ge m\right\}\\
	&\le \#\left\{\x\in\X:\xi^+(\x,N^{-1}_+(\abs{\eta^2})-1)\ge m\right\}=k+r-\left[\frac{\bar{c}}{1+\bar{c}}r\right]-1,
\end{align*}
which implies \eqref{disjoint}. 

Finally, by summing over $r>C(\log m)^2$ in \eqref{compare4}, we get  \eqref{Card3}.
\end{proof}

\begin{proof}[Proof of Proposition \ref{smallalpha}]
By \eqref{relation1}, it is enough to show that 
\begin{align}\label{Me}
	\begin{aligned}
		\p\Big\{\mathcal{M}_{\rm e}^k(m,\alpha)\neq \emptyset\,\Big\vert\,&\Theta(k,m,[m-m^{\kappa_1}+1,m))=\emptyset,\,\#\mathcal{M}^k(m,{\kappa_1})\le (\log m)^2,\\
		&M^k_m,\,\widetilde{S}_{[0,N_{T^k_m}]},\,\left(L^j_m:j=1,\ldots,k\right)\Big\}\lesssim \frac{(\log m)^2}{m^{\kappa_1-\alpha}},
	\end{aligned}
	\end{align}
	and 
	\begin{align}\label{Mo}
	\begin{aligned}
		\p\Big\{\mathcal{M}_{\rm o}^k(m,\alpha)\neq \emptyset\,\Big\vert\,&\Theta'(k,m,[m-m^{\kappa_1}+1,m))=\emptyset,\,\#\mathcal{M}^k(m,{\kappa_1})\le (\log m)^2,\\
		&M^k_m,\,\widetilde{S}_{[0,N_{T^k_m}]},\,\left(L^j_m:j=1,\ldots,k\right)\Big\}\lesssim \frac{(\log m)^2}{m^{\kappa_1-\alpha}}.
	\end{aligned}
	\end{align}
We shall only give the proof of \eqref{Me}, for \eqref{Mo} can be proved in the same way.

In addition to the conditioning in \eqref{Me}, we further condition on $\mathcal{M}^k(m,{\kappa_1})$ and denote by {$\q=\q\big(\mathcal{M}^k(m,{\kappa_1})\big)$} the conditional probability. Then we have 
\begin{align}\label{M^k_e}
\begin{aligned}
    \mathcal{M}_{\rm e}^k(m,\alpha)\subset\Big\{&x\in \mathcal{M}^k(m,{\kappa_1})\cap \z^2_{\rm e}:\widetilde{\xi}(x,N_{T^k_m})=\widetilde{\xi}^+(\x(x),N_{T^k_m})\\&\in \Big(\frac{15}{16}(m-m^{\kappa_1})-c_*m^{1-\kappa_1},\frac{15}{16}m+c_*m^{1-\kappa_1}\Big)\Big\}.    
\end{aligned}
\end{align}
It follows from Proposition \ref{condilaw2} (In the proof of \eqref{Mo}, Proposition \ref{condilaw21} will be used instead.) that the events $\big\{x\in \mathcal{M}_{\rm e}^k(m,\alpha)\big\}$, for $x$ {in the set on the RHS of \eqref{M^k_e}}, are conditionally independent with probability {(recall again that $\bar{p}(i,j):=p(i,j-i)$)}
\[\frac{\sum_{j=m-m^{\alpha}+1}^{m-1}\bar{p}(\widetilde{\xi}(x,N_{T^k_m}),j)}{\sum_{j=m-m^{\kappa_1}+1}^{m-1}\bar{p}(\widetilde{\xi}(x,N_{T^k_m}),j)}.\]
\red{Similar to Lemma \ref{pbound}, as a corollary of Lemma \ref{stirling01}}, uniformly in $i\in \big(\frac{15}{16}(m-m^{\kappa_1})-c_*m^{1-\kappa_1},\frac{15}{16}m+c_*m^{1-\kappa_1}\big)$ and $j_1,j_2\in (m-m^{\kappa_1},m)$, 
\begin{equation}\label{eq:seesaw}
\bar{p}(i,j_1)\asymp \bar{p}(i,j_2).    
\end{equation}
Consequently, there exists universal $c_1,c_2>0$ such that for all $x\in  \mathcal{M}^k(m,{\kappa_1})\cap \z^2_{\rm e}$,
\[\frac{c_1}{m^{\kappa_1-\alpha}}\le \q\big\{x\in \mathcal{M}_{\rm e}^k(m,\alpha)\big\}\le \frac{c_2}{m^{\kappa_1-\alpha}}.\]
Further noting the conditioning $\big\{\#\mathcal{M}^k(m,{\kappa_1})\le (\log m)^2\big\}$, we get
\[\q\left\{\mathcal{M}_{\rm e}^k(m,\alpha)\neq \emptyset\right\}\le \frac{c_2(\log m)^2}{m^{\kappa_1-\alpha}},\]
which implies \eqref{Me}.
\end{proof}

\appendix
\section{Proof of Proposition \ref{lower}}
In this appendix, we give a proof of Proposition \ref{lower}, adapting the arguments from Sections 3-6 of \cite{Rosen05}.
Let $K_n=16e^nn^9$ and fix $\epsilon>0$. It suffices to show that for $n$ large,
\begin{align}\label{Kn}
	\p\left[\xi^*({\mathsf{H}_{D(0,K_n)^{\rm c}}})\ge \frac{4}{\pi}(\log K_n)^2-(\log K_n)^{8/5+2\epsilon}\right]&> \exp\left\{-n^{3/5+\epsilon/3}\right\}.
\end{align}
Indeed, we note that there exists $c>0$ such that for any $n\ge 1$,
\[\p\left[\mathsf{H}_{D(0,K_n)^{\rm c}}\ge(K_ne^{(\log K_n)^{3/5+\epsilon}})^2\right]\le \exp\left\{-c(\log K_n)^{3/5+\epsilon}\right\}\le \exp\left\{-cn^{3/5+\epsilon}\right\}.\]
Letting $J_n:=(K_ne^{(\log K_n)^{3/5+\epsilon}})^2$, \eqref{Kn} implies that for $n$ large,
\begin{align*}
	&\quad\,\p\left[\xi^*({J_n})\ge \frac{1}{\pi}(\log J_{n+1})^2-(\log J_{n+1})^{8/5+3\epsilon}\right]\\
	&\ge \p\left[\xi^*({\mathsf{H}_{D(0,K_n)^{\rm c}}})\ge \frac{4}{\pi}(\log K_n)^2-(\log K_n)^{8/5+2\epsilon}\right]-\p\left[\mathsf{H}_{D(0,K_n)^c}\ge J_n\right]\\
	&> \exp\left\{-n^{3/5+\varepsilon/2}\right\}\ge \exp\left\{-(\log J_n)^{3/5+\epsilon/2}\right\}.
	\end{align*}
	By interpolation, we have
\begin{align*}
	\p\left[\xi^*(n)\ge \frac{1}{\pi}(\log n)^2-(\log n)^{8/5+3\epsilon}\right]>\exp\left\{-(\log n)^{3/5+\epsilon/2}\right\},
\end{align*}
and consequently,
\begin{align*}
	\p\left[\xi^*(n/e^{(\log n)^{3/5+2\epsilon}})\ge \frac{1}{\pi}(\log n)^2-(\log n)^{8/5+4\epsilon}\right]>\exp\left\{-(\log n)^{3/5+\epsilon}\right\}.
\end{align*}
By partitioning the random walk path of length $n$ to $e^{(\log n)^{3/5+2\epsilon}}$ segments of length $n/e^{(\log n)^{3/5+2\epsilon}}$ each and using the independence of increments, we obtain Proposition \ref{lower}.

{\begin{rem}
    The proof of \eqref{Kn} is a refinement of the arguments in Sections 3-6 of \cite{Rosen05}. Both \cite{Rosen05} and our work consider certain subsets of thick points, whose probability is easier to bound below. 
    The major difference is that we modify the definition of $K_n$ and $r_{n,k}$ below, which enables us to improve the estimates of the first moment as shown in \eqref{first_moment} below; while with the definitions in \cite{Rosen05}, one can check that the first moment (of the corresponding subsets of thick points considered here) can not exceed $\exp\big\{-C(n/\log n)\big\}$ for some $C>0$, which is not enough for the proof of \eqref{Kn}.
\end{rem}}

Fixing $x\in \z^2$ and $n\ge 1$, let $\bar{\eta}_0=0$ and for $i\ge 1$,
\begin{align*}
	\eta_i=\inf\{t\ge \bar{\eta}_{i-1}:S_t\in \partial D(x,r^0_n)\},\quad
	\bar{\eta}_i=\inf\{t\ge \eta_i:S_t\in \partial D(x,R_n)\}.
\end{align*}
We define $\mathcal{G}^x_{(R_n,r^0_n)}$ as the $\sigma$-algebra generated by the excursions $\{e^{(i)}:i\ge 1\}$, where $e^{(i)}=\{S_t:\bar{\eta}_{i-1}\le t\le \eta_i\}$ is the $i$-th excursion from $\partial D(x,R_n)$ to $\partial D(x,r^0_n)$ (for $i=1$, the excursion begins at $t=0$).
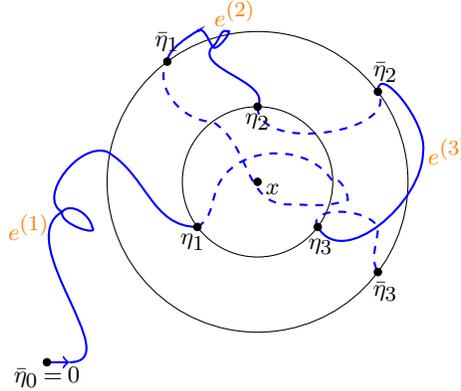
\begin{figure}[h]
\centering
\begin{tikzpicture}[scale=2]
    \coordinate (center) at (0,0);
    \def\outerRadius{1}
    \def\innerRadius{0.5}

    \draw[fill=white] (center) circle (\outerRadius);
    \draw[fill=white] (center) circle (\innerRadius);

    \coordinate (start) at (-1.4,-1.2);

    \draw[thick, blue,->]
    (start)
    to[out=0,in=-180] (-1.25,-1.2);
    \draw[thick, blue]
        (-1.25,-1.2)
        to[out=0,in=-150] (-1.3,-0.2)
        to[out=50,in=100] (-1.1,-0.3)
        to[out=-50,in=-100] (-1.3,-0.2)
        to[out=150,in=160] (-1,0.2)
        to[out=-30, in=180] (-0.4,-0.3); 
    \draw[thick, blue, dashed]
        (-0.4,-0.3)
        to[out=30, in=240] (-0.25,0)
        to[out=60, in=120] (0.5,0)
        to[out=60, in=90] (0.6,-0.12)
        to[out=200, in=0] (0.2,-0.15)
        to[out=-180, in=-30] (-0.3,0.4)
        to[out=-180, in=-120] (-0.6,0.8);
    \draw[thick, blue]
        (-0.6,0.8)
    to[out=90,in=30] (-0.4,1)
    to[out=45,in=120] (-0.3,0.9)
    to[out=-60,in=30] (-0.2,1)
    to[out=150,in=30] (-0.3,0.9)
    to[out=-130,in=60] (0,0.5);
    \draw[thick, blue, dashed]
        (0,0.5)
        to[out=-90,in=-60] (0.8,0.6);
    \draw[thick, blue]
        (0.8,0.6)
        to[out=90,in=80] (1.1,0.2)
        to[out=-90,in=-60] (0.4,-0.3);
    \draw[thick, blue, dashed]
        (0.4,-0.3)
        to[out=120,in=120] (0.8,-0.3)
        to[out=-90,in=120] (0.8,-0.6);

    \filldraw [black] (center) circle (0.7pt);
    \node at (0.1,-0.05) {$x$};
    \filldraw [black] (start) circle (0.7pt);
    \node at (-1.4,-1.3) {$\bar{\eta}_0=0$};
    \filldraw [black] (-0.4,-0.3) circle (0.7pt);
    \node at (-0.43,-0.4) {$\eta_1$};
    \filldraw [black] (-0.6,0.8) circle (0.7pt);
    \node at (-0.6,0.93) {$\bar{\eta}_1$};
    \filldraw [black] (0,0.5) circle (0.7pt);
    \node at (-0,0.4) {$\eta_2$};
    \filldraw [black] (0.8,0.6) circle (0.7pt);
    \node at (0.85,0.73) {$\bar{\eta}_2$};
    \filldraw [black] (0.4,-0.3) circle (0.7pt);
    \node at (0.42,-0.43) {$\eta_3$};
    \filldraw [black] (0.8,-0.6) circle (0.7pt);
    \node at (0.85,-0.73) {$\bar{\eta}_3$};
    \node at (-1.52,-0.3) {\color{orange}$e^{(1)}$};
    \node at (-0.15,1.12) {\color{orange}$e^{(2)}$};
    \node at (1.27,0.2) {\color{orange}$e^{(3)}$};
\end{tikzpicture}
\caption{Excursions $\{e^{(i)}:i\ge 1\}$. The two concentric disks are $D(x,R_n)$ and $D(x,r^0_n)$. The blue curve (both solid and dashed parts) is the sample path of $S$, the three solid sections of which are $e^{(1)}$, $e^{(2)}$, and $e^{(3)}$ respectively.}
\end{figure}

For each $i\ge 1$, let $\bar{\zeta}_{i,0}=\eta_i$ and for $j\ge 1$, define
\begin{align*}
	\zeta_{i,j}&:=\inf\{t\ge \bar{\zeta}_{i,j-1}:S_t\in \partial D(x,r_n)\},\\
	\bar{\zeta}_{i,j}&:=\inf \{t\ge \zeta_{i,j}:S_t\in \partial D(x,r^0_n)\}.
\end{align*}
Let $v_{i,j}:=\{S_t:\zeta_{i,j}\le t\le \bar{\zeta}_{i,j}\}$ and $Z^i:=\sup\{j\ge 0: \bar{\zeta}_{i,j}<\bar{\eta}_i\}$. Then $(v_{i,j}:j=1,\ldots,Z^i)$ are excursions from $\partial D(x,r_n)$ to $\partial D(x,r^0_n)$ during the $i$-th excursion from $\partial D(x,r^0_n)$ to $\partial D(x, R_n)$ and we define $\mathcal{H}^x_{(R_n,r^0_n,r_n),i}:=\sigma(v_{i,j}:j=1,\ldots,Z^i)$.
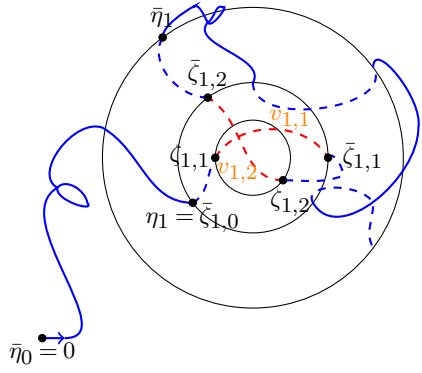
\begin{figure}[h]
\centering
\begin{tikzpicture}[scale=2]
    \coordinate (center) at (0,0);
    \def\outerRadius{1}
    \def\innerRadius{0.5}

    \draw[fill=white] (center) circle (\outerRadius);
    \draw[fill=white] (center) circle (\innerRadius);
    \draw[fill=white] (center) circle (0.25);

    \coordinate (start) at (-1.4,-1.2);

    \draw[thick, blue,->]
    (start)
    to[out=0,in=-180] (-1.25,-1.2);
    \draw[thick, blue]
        (-1.25,-1.2)
        to[out=0,in=-150] (-1.3,-0.2)
        to[out=50,in=100] (-1.1,-0.3)
        to[out=-50,in=-100] (-1.3,-0.2)
        to[out=150,in=160] (-1,0.2)
        to[out=-30, in=180] (-0.4,-0.3); 
    \draw[thick, blue, dashed]
        (-0.4,-0.3)
        to[out=30, in=240] (-0.25,0);
    \draw[thick, red, dashed]
        (-0.25,0)
        to[out=60, in=120] (0.5,0);
    \draw[thick, blue, dashed]
    	(0.5,0)
        to[out=60, in=90] (0.6,-0.12)
        to[out=200, in=0] (0.2,-0.15);
    \draw[thick, red, dashed]
    	(0.2,-0.15)
        to[out=-180, in=-30] (-0.3,0.4);
    \draw[thick, blue, dashed]
        (-0.3,0.4)
        to[out=-180, in=-120] (-0.6,0.8);
    \draw[thick, blue]
        (-0.6,0.8)
    to[out=90,in=30] (-0.4,1)
    to[out=45,in=120] (-0.3,0.9)
    to[out=-60,in=30] (-0.2,1)
    to[out=150,in=30] (-0.3,0.9)
    to[out=-130,in=60] (0,0.5);
    \draw[thick, blue, dashed]
        (0,0.5)
        to[out=-90,in=-60] (0.8,0.6);
    \draw[thick, blue]
        (0.8,0.6)
        to[out=90,in=80] (1.1,0.2)
        to[out=-90,in=-60] (0.4,-0.3);
    \draw[thick, blue, dashed]
        (0.4,-0.3)
        to[out=120,in=120] (0.8,-0.3)
        to[out=-90,in=120] (0.8,-0.6);

    \filldraw [black] (start) circle (0.7pt);
    \node at (-1.4,-1.3) {$\bar{\eta}_0=0$};
    \filldraw [black] (-0.4,-0.3) circle (0.7pt);
    \node at (-0.4,-0.4) {$\eta_1=\bar{\zeta}_{1,0}$};
    \filldraw [black] (-0.6,0.8) circle (0.7pt);
    \node at (-0.6,0.93) {$\bar{\eta}_1$};
    \filldraw [black] (-0.25,0) circle (0.7pt);
    \node at (-0.42,0) {$\zeta_{1,1}$};
    \filldraw [black] (0.5,0) circle (0.7pt);
    \node at (0.73,0) {$\bar{\zeta}_{1,1}$};
    \filldraw [black] (0.2,-0.15) circle (0.7pt);
    \node at (0.25,-0.3) {$\zeta_{1,2}$};
    \filldraw [black] (-0.3,0.4) circle (0.7pt);
    \node at (-0.3,0.55) {$\bar{\zeta}_{1,2}$};
    \node at (0.25,0.25) {\color{orange}$v_{1,1}$};
    \node at (-0.1,-0.1) {\color{orange}$v_{1,2}$};
\end{tikzpicture}
\caption{Excursions $\{v_{i,j}:j=1,\ldots,Z^i\}$. The disk $D(x,r_n)$ is added in this figure. The two red dashed sections are $v_{1,1}$ and $v_{1,2}$ respectively and $Z^1=2$ in this case.}
\end{figure}

Fixing $n\ge 1$, for any $m\ge 1$, let $\mathcal{H}^x_{(R_n,r^0_n,r_n)}(m)$ be the collection of events $H_n$ which can be written as a countable union of disjoint events in the form $H_{n,1}\cap H_{n,2}\cap\cdots\cap H_{n,m}$ with $H_{n,i}\in \mathcal{H}^x_{(R_n,r^0_n,r_n),i}$. (Be careful that $\mathcal{H}^x_{(R_n,r^0_n,r_n)}(m)$ is not a $\sigma$-algebra.) Moreover, we say 
\[\mbox{$H_n\in\mathcal{H}^x_{(R_n,r^0_n,r_n)}(m)$ satisfies Condition \eqref{On} with parameter $\lambda>0$}\]

if for any $y,z\in \partial D(x,r_n)$ and each $H_{n,i}$ in the above union,
\begin{align}\label{On}\tag{$*$}
 	\big(1-\lambda n^{-3}\big)\p^y(H_{n,i})\le \p^z(H_{n,i})\le \left(1+\lambda n^{-3}\right)\p^y(H_{n,i}).
 \end{align}
 Here with slight abuse of notation we also denote by $\p^y$ the law of $(v_{i,j}:j\ge 1)$ conditioned on $Z^i\ge 1$ and $v_{i,1}$ starts from $y$. 
We show the following lemma using the same arguments as that in Lemma 6.3 of \cite{Rosen05}.
 \begin{lem}\label{preliminary}
 Fix $(R_n,r^0_n,r_n)$ with $r^0_n\ge 2r_n$ and $\frac{R_n}{r^0_n}\wedge  r_n\ge n^3$. Then for any $\lambda>0$, there exists $\lambda'>0$ such that for all $m,n\in \N$ with $1\le m\le n^{5/2}$, $H_n\in \mathcal{H}^x_{(R_n,r^0_n,r_n)}(m)$ satisfying Condition \eqref{On} with parameter $\lambda$, $x\in \z^2$, and $y_0,y_1\in \z^2\setminus D(x,r^0_n)$,
  	\begin{align}\label{On2}
 		\left(1-\lambda'mn^{-3}\log n\right)\p^{y_1}(H_n)\le \p^{y_0}(H_n\,\vert\,\mathcal{G}^x_{(R_n,r^0_n)})\le \left(1+\lambda'mn^{-3}\log n\right)\p^{y_1}(H_n).
 	\end{align}
 \end{lem}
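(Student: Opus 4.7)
Following the template of \cite[Lemma 6.3]{Rosen05}, the plan is to combine the disjoint-union structure of $H_n$, a conditional-independence decomposition over the $m$ middle phases, and a harmonic-measure comparison on a single middle phase. First, since every $H_n \in \mathcal{H}^x_{(R_n,r^0_n,r_n)}(m)$ is by definition a countable disjoint union of intersections $\bigcap_{i=1}^m H_{n,i}$ with $H_{n,i}\in\mathcal{H}^x_{(R_n,r^0_n,r_n),i}$, and a uniform multiplicative $(1\pm\epsilon)$-bound on each summand of a disjoint union transfers to the union, the proof reduces to establishing \eqref{On2} for a single such intersection. Next, the strong Markov property applied at the stopping times $\bar\eta_{i-1}$ and $\eta_i$ makes the middle-phase trajectories $(S_t:\eta_i\le t\le\bar\eta_i)$ conditionally independent given $\mathcal{G}^x_{(R_n,r^0_n)}$, so that
\begin{equation*}
  \p^{y_0}\Big(\bigcap_{i=1}^m H_{n,i}\,\Big|\,\mathcal{G}^x_{(R_n,r^0_n)}\Big) = \prod_{i=1}^m \p^{y_0}\big(H_{n,i}\,\big|\,\mathcal{G}^x_{(R_n,r^0_n)}\big).
\end{equation*}

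The heart of the proof is the per-phase estimate: one must exhibit a deterministic constant $q_i=q_i(H_{n,i})$, depending only on $H_{n,i}$ (and $n,x,r_n,r^0_n,R_n$), with
\begin{equation*}
  \p^{y_0}\big(H_{n,i}\,\big|\,\mathcal{G}^x_{(R_n,r^0_n)}\big) = \big(1 + O(n^{-3}\log n)\big)\,q_i.
\end{equation*}
Given $\mathcal{G}^x_{(R_n,r^0_n)}$, the $i$-th middle phase is a walk started at $w:=S_{\eta_i}\in\partial D(x,r^0_n)$, conditioned to first hit $\partial D(x,R_n)$ at $w':=S_{\bar\eta_i}$. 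I would decompose by the excursion count $Z^i$: the contribution of $\{Z^i=0\}$ is controlled by the hitting estimates \eqref{hitting}--\eqref{hitting2}, which show that the probability of escaping to $\partial D(x,R_n)$ without visiting $\partial D(x,r_n)$ differs across starting points on $\partial D(x,r^0_n)$ only by a factor $1+O(n^{-3})$, thanks to $r_n,R_n/r^0_n\ge n^3$. On $\{Z^i\ge 1\}$ the first inner excursion starts at a harmonic-measure sample $z\in\partial D(x,r_n)$; a Harnack-type comparison based on the planar Green's function asymptotics shows this sampling distribution is $w$-insensitive at the $1+O(n^{-3})$ level, and then Condition \eqref{On} absorbs the remaining dependence on $z$, also at the $1+O(n^{-3})$ level. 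The extra $\log n$ factor arises because the number of inner excursions $Z^i$ in a single middle phase is geometric with mean of order $\log(R_n/r^0_n)/\log(r^0_n/r_n)=O(\log n)$, and the comparison errors accumulate linearly when iterated over successive inner excursions within one middle phase.

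Taking the product over $i=1,\ldots,m$ and using the hypothesis $m\le n^{5/2}$ so that $(1+Cn^{-3}\log n)^m\le 1+C'mn^{-3}\log n$ yields
\begin{equation*}
  \p^{y_0}(H_n\,|\,\mathcal{G}^x_{(R_n,r^0_n)}) = \big(1+O(mn^{-3}\log n)\big)\prod_{i=1}^m q_i.
\end{equation*}
Running the same analysis with $y_0$ replaced by $y_1$ and integrating out the conditioning by the tower property then gives $\p^{y_1}(H_n)=(1+O(mn^{-3}\log n))\prod_i q_i$; combining these two identities proves \eqref{On2}. The principal obstacle is the single-middle-phase estimate: one must verify, with the sharp error rate $O(n^{-3}\log n)$, that the conditional probability depends on the outer-excursion endpoints $(S_{\eta_i},S_{\bar\eta_i})$ only through terms of this order, which requires careful use of the planar Green's-function asymptotics and the gauge conditions $r_n,R_n/r^0_n\ge n^3$.
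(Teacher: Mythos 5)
The paper itself does not supply a proof of Lemma~\ref{preliminary}: it simply states that the lemma is shown ``using the same arguments as that in Lemma 6.3 of \cite{Rosen05}'' and moves on. There is therefore no in-paper proof to compare against, so I can only assess your reconstruction against the structure of Rosen's argument and against the internal consistency of the statement.

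Your outline captures the correct architecture: pass from $H_n$ to a single disjoint summand $\bigcap_{i=1}^m H_{n,i}$ (multiplicative bounds survive disjoint unions), use the strong Markov property to make the $m$ middle phases conditionally independent given $\mathcal{G}^x_{(R_n,r^0_n)}$, compare each middle-phase conditional probability to a deterministic $q_i$ at precision $1+O(n^{-3}\log n)$, and multiply, with $m\le n^{5/2}$ keeping $m n^{-3}\log n$ small enough that the errors compound linearly. The final step of integrating out $\mathcal{G}^x$ under $\p^{y_1}$ to identify $\p^{y_1}(H_n)$ with $(1+O(m n^{-3}\log n))\prod_i q_i$ is also correct. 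This is indeed the shape of the Rosen argument, and you rightly identify the per-phase estimate as the technical core.

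Two remarks that you should be aware of. First, you leave the per-phase estimate as a sketch; the substantive work is exactly there, in showing that the bridge tilt (conditioning on $S_{\bar\eta_i}$) perturbs both the escape/re-entry probabilities between inner excursions and the entry-point distribution on $\partial D(x,r_n)$ by only $1+O(n^{-3})$ per step, after which Condition~\eqref{On} handles the remaining entry-point dependence. Second, your explanation of the $\log n$ factor as the expected value of $Z^i\approx\log(R_n/r^0_n)/\log(r^0_n/r_n)$ implicitly needs this ratio to be $O(\log n)$, which does not follow from the stated lower bounds $R_n/r^0_n\ge n^3$ and $r^0_n\ge 2r_n$ alone. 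In the actual applications in the appendix (triples of the form $(r_{n,l},r_{n,\tilde l},r_{n,\tilde l+1})$ with $\tilde l=l+3\log n$) one has $R_n/r^0_n=n^3$ and $r^0_n/r_n=e$, so $\e[Z^i]\asymp\log n$ as you say; but the lemma as stated reads as if it held for all triples meeting those one-sided conditions, so either an upper bound on $\log(R_n/r^0_n)/\log(r^0_n/r_n)$ is implicitly assumed or the $\log n$ has a slightly different origin in the Green's-function error terms. Worth pinning down if you intend to turn the sketch into a complete argument.
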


\subsection{Proof of \eqref{Kn}}
Let $K_n=16e^nn^9$. We define $r_{n,k}=e^{n-k}n^9$ for $k=0,\ldots,n$ and $r_{n,n+1}=n^6$. In particular, we have $r_{n,0}=\frac{1}{16}K_n$. Write $U_n=[2r_{n,0},3r_{n,0}]^2$. Let $N^x_{n,k}$ be the number of excursions from $\partial D(x,r_{n,k-1})$ to $\partial D(x,r_{n,k})$ before $\mathsf{H}_{\partial D(0,K_n)}$ for $k=1,\ldots, n+1$. Fixing $0<\delta<\delta'<1$, a point $x\in U_n$ is called $(n,\delta)$-successful if $$
\mbox{$N^x_{n,1}=1$, $\abs{N^x_{n,k}-2k^2}\le k^{1+\delta}$,  $\forall k=2,\ldots, n$, and $N^x_{n,n+1}\in \left[\frac{2n^2-n^{1+\delta}}{3\log n},n^3\right]$.}
$$
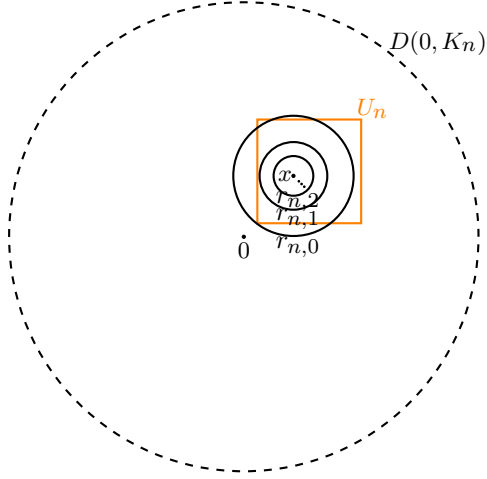
\begin{figure}
\centering
\begin{tikzpicture}[scale=0.6]
    \draw[thick, orange] (0.7*1cm-0.4cm, 0.7*1cm-0.4cm) rectangle (3*1cm-0.4cm, 3*1cm-0.4cm);
    \node at (3.25*1cm-0.4cm, 3.25*1cm-0.4cm) {\color{orange}$U_n$};

    \node at (1.31*1cm-0.4cm, 1.75*1cm-0.4cm) {\( x \)};

    \draw[thick] (1.5*1cm-0.4cm, 1.75*1cm-0.4cm) circle (1.333cm);
    \draw[thick] (1.5*1cm-0.4cm, 1.75*1cm-0.4cm) circle (0.75cm);
    \draw[thick] (1.5*1cm-0.4cm, 1.75*1cm-0.4cm) circle (0.44cm);

    \node at (1.6cm-0.4cm, 0.18cm-0.4cm) {\small{\( r_{n,0} \)}};
    \node at (1.6cm-0.4cm, 0.8cm-0.4cm) {\small{\( r_{n,1} \)}};
    \node at (1.6cm-0.4cm, 1.18cm-0.4cm) {\small{\( r_{n,2} \)}};

    \draw[thick, dashed] (0,0) circle (5.2*1cm);
    \node at (4.3*1cm, 4.3*1cm) {\( D(0, K_n) \)};

    \filldraw[black] (1.5*1cm-0.4cm, 1.75*1cm-0.4cm) circle (1pt);
    \filldraw[black] (0,0) circle (1pt);
    \filldraw[black] (1.6cm+0.02cm-0.4cm,1.65cm-0.02cm-0.4cm) circle (0.5pt);
    \filldraw[black] (1.66cm+0.02cm-0.4cm,1.59cm-0.02cm-0.4cm) circle (0.5pt);
    \filldraw[black] (1.72cm+0.02cm-0.4cm,1.53cm-0.02cm-0.4cm) circle (0.5pt);

    \node at (0, -0.3cm) {\( 0 \)};
\end{tikzpicture}
\caption{$D(0,K_n)$, $D(x,r_{n,k})$, and $U_n$.}
\end{figure}
We set
\begin{align*}
Y(n,x)&=1\left\{x\text{ is $(n,\delta)$-successful}\right\},\\
	Y'(n,x)&=1\left\{\xi(x,\mathsf{H}_{D(0,K_n)^c})\ge \frac{4}{\pi}(\log K_n)^2-(\log K_n)^{1+\delta'},\ x\text{ is $(n,\delta)$-successful}\right\}.
\end{align*}
Obviously,\begin{align*}
	\p\left\{\xi^*(\mathsf{H}_{D(0,K_n)^c})\ge \frac{4}{\pi}(\log K_n)^2-(\log K_n)^{1+\delta'}\right\}\ge \p\left\{\sum_{x\in U_n}Y'(n,x)\ge 1\right\}.
\end{align*}
In the following, we shall prove
\begin{align}\label{second}
	\p\left\{\sum_{x\in U_n}Y'(n,x)\ge 1\right\}\ge \exp\left\{-n^{(1-2\delta)\vee3\delta+o(1)}\right\}.
\end{align}
Once it is proved, taking $\delta=1/5$ and $\delta'=3/5+2\epsilon$ gives \eqref{Kn}.
The proof will be completed by the second moment method. Recall that for non-negative sequences $(a_n)$ and $(b_n)$, we write $a_n\lesssim b_n$ if there exists $c>0$ such that for any $n\ge 1$, $a_n\le cb_n$. And write $a_n\asymp b_n$ if $a_n\lesssim b_n$ and $b_n\lesssim a_n$. When $a_n(x)$ and $b_n(x)$ are functions, we say $a_n \lesssim b_n$ uniformly in $x$ (resp. $a_n\asymp b_n$ uniformly in $x$) if the above constant $c$ does not depend on $x$.
\begin{pro}\label{moment}
(1) One has,
$$\inf_{x\in U_n}E[Y'(n,x)]\asymp \sup_{x\in U_n}E[Y'(n,x)]\ge \exp\left\{-2n-n^{[(1-2\delta)\vee 3\delta]+o(1)}\right\}.$$
		Consequently, 
  \begin{align}\label{first_moment}
      I'_n:=\e\left[\sum_{x\in U_n}Y'(n,x)\right]\ge \exp\left\{-n^{[(1-2\delta)\vee 3\delta]+o(1)}\right\};
  \end{align}
\noindent(2) Let $$l(x,y):=\min\{m\ge 1:D(x,r_{n,m})\cap D(y,r_{n,m})=\emptyset\}.$$ Then uniformly in $x,y\in U_n$ with $l(x,y)\le n-3\log n$,
		\begin{align*}
			\e\left[Y'(n,x)Y'(n,y)\right]\le \exp\left\{2l(x,y)+n^{[(1-2\delta)\vee 3\delta]+o(1)}\right\}\e[Y'(n,x)]\e[Y'(n,y)].
		\end{align*}
\end{pro}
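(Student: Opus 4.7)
The plan is to execute the multi-scale excursion analysis of Sections~5--6 of \cite{Rosen05} with the refined scales $r_{n,k}=e^{n-k}n^9$ and $r_{n,n+1}=n^6$ from above, relying on Lemma~\ref{preliminary} for the Harnack-type factorisation of excursion probabilities. With these choices the log-ratios $\log(r_{n,k-1}/r_{n,k})=1$ are uniform for $1\le k\le n$ and equal $3\log n$ at the innermost scale, so by~\eqref{hitting2} the conditional law of $N^x_{n,k+1}$ given $N^x_{n,k}=m$ is, up to Lemma~\ref{preliminary}-type multiplicative errors, approximated by a sum of $m$ i.i.d.\ geometric variables with success probability close to $1/2$ for intermediate $k$ and close to $(1+3\log n)^{-1}$ at $k=n$. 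This explains the targets $2k^2$ and $(2n^2-n^{1+\delta})/(3\log n)$ appearing in the definition of $(n,\delta)$-successful, and makes the excursion chain amenable to local-CLT analysis at every scale.

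For part~(1), I would first use Lemma~\ref{preliminary} to reduce $\e[Y'(n,x)]$ to a base-point calculation: the event defining $Y'(n,x)$ depends on the starting configuration only through multiplicative factors $1+O(n^{-1})$ uniformly in $x\in U_n$, yielding $\inf\asymp\sup$. Next I would realise the prescribed excursion pattern scale by scale. At an intermediate scale $k$, conditioning on the previous scale being on target, the probability that $|N^x_{n,k}-2k^2|\le k^{1+\delta}$ admits a local-CLT lower bound of order $k^{\delta-1}$ near the mean, with a Gaussian penalty $\exp(-ck^{2\delta})$ at the boundary of the window. Multiplying over $k=1,\ldots,n$ produces the bulk factor $\exp\{-2n+O(\log n)\}$ (matching the initial cost $\p(\mathsf{H}_{D(x,r_{n,0})}<\mathsf{H}_{\partial D(0,K_n)})\asymp e^{-2n}$) together with an aggregated tail penalty $\exp\{-n^{(1-2\delta)+o(1)}\}$; the tighter window of width $n^3$ around the mean $\asymp n^2/\log n$ at the innermost scale costs an additional $\exp\{-n^{3\delta+o(1)}\}$. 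Finally, given $N^x_{n,n+1}$ in range, $\xi(x,\mathsf{H}_{D(0,K_n)^c})$ is a sum of $N^x_{n,n+1}$ nearly i.i.d.\ contributions of mean $\tfrac{6}{\pi}\log n$ by~\eqref{hitting}, so a Gaussian lower-deviation argument (parallel to the one proving Proposition~\ref{lower}) bounds the conditional thickness probability below by a positive constant.

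For part~(2), I would split the joint event $\{Y'(n,x)=Y'(n,y)=1\}$ at the separation scale $l:=l(x,y)$. For $k<l$, the disks $D(x,r_{n,k})$ and $D(y,r_{n,k})$ overlap, so $N^x_{n,k}$ and $N^y_{n,k}$ are generated by essentially the same excursions of the walk and imposing both constraints at scale $k$ is a single event whose probability coincides with the single-site computation truncated at scale $l$, contributing the factor $\exp\{-2l+n^{(1-2\delta)\vee 3\delta+o(1)}\}$. For $k\ge l$ the inner disks are disjoint, so Lemma~\ref{preliminary} decouples the excursions around $x$ and around $y$ simultaneously; the decoupling errors take the form $1+O(mn^{-3}\log n)$ per conditioning step and multiply over the at most $n^3$ involved excursions to $\exp\{n^{o(1)}\}$. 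Putting the two regimes together gives $\e[Y'(n,x)Y'(n,y)]\le \exp\{2l+n^{(1-2\delta)\vee 3\delta+o(1)}\}\,\e[Y'(n,x)]\,\e[Y'(n,y)]$, as claimed.

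The main obstacle, and the reason for the non-standard scales $r_{n,k}=e^{n-k}n^9$, is the need to simultaneously balance the two subexponential costs $n^{1-2\delta}$ (from the tight windows at intermediate scales) and $n^{3\delta}$ (from the tight window at the innermost scale) so that both contribute $\exp\{-n^{3/5+o(1)}\}$ when optimised at $\delta=1/5$. With the scales used in \cite{Rosen05} the first-moment lower bound saturates at $\exp\{-Cn/\log n\}$, which is too weak to yield \eqref{Kn} via the second-moment inequality $\p(\sum Y'\ge 1)\ge (I'_n)^2/\e[(\sum Y')^2]$; the uniformity $\log(r_{n,k-1}/r_{n,k})=1$ in $k$ together with the refined innermost window are what unlock the sharper exponent required to close the argument.
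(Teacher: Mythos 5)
Your architecture (Lemma~\ref{preliminary}-based Harnack decoupling, scale-by-scale excursion counts, second-moment split at the separation scale) matches the paper's, but the quantitative heart of Part~(1) is asserted rather than derived, and the stated mechanism does not actually produce the claimed exponent. You invoke a per-scale ``Gaussian penalty $\exp(-ck^{2\delta})$ at the boundary of the window'' and then assert that multiplying over $k\le n$ leaves, beyond the bulk $e^{-2n}$, an ``aggregated tail penalty $\exp\{-n^{(1-2\delta)+o(1)}\}$''. If one literally multiplied those per-scale penalties one would get $\exp\{-c\sum_{k\le n}k^{2\delta}\}\asymp\exp\{-cn^{1+2\delta}\}$, the wrong sign in the $\delta$-exponent. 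The reason the answer is $n^{1-2\delta}$ and not $n^{1+2\delta}$ is that the constraints at consecutive scales are strongly correlated: writing $m_\ell=2\ell^2+\Delta_\ell$, the cross-terms $\sum_\ell(\Delta_{\ell+1}-\Delta_\ell)/\ell$ telescope to $O(n^\delta)$ and the remaining quadratic form $\tfrac18\sum(\Delta_{\ell+1}-\Delta_\ell)^2/\ell^2$ is exactly the action of a time-changed random walk. The paper (Lemma~\ref{relate}, Proposition~\ref{pp}) converts the product of LCLT factors into a Brownian-motion small-ball probability (Lemma~\ref{maxabs}), $\p(|B_t|\le r\ \forall\, t\le u)=\exp\{-O(u/r^2)\}$, over time $u\asymp n^3$ in a tube whose narrowest width is $r\asymp n^{\rho(1+\delta)}$, giving $\exp\{-O(n^{3-2\rho(1+\delta)})\}$, which optimised over $\rho$ yields $n^{1-2\delta+o(1)}$. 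Without this recasting of the deviation sequence as a confined Brownian path, the claimed exponent in Part~(1) cannot be reached; this is the missing idea.

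A secondary error is your attribution of the $n^{3\delta}$ cost to the innermost window $N^x_{n,n+1}\in[(2n^2-n^{1+\delta})/(3\log n),n^3]$. In the paper (end of the proof of Lemma~\ref{relate}) that window is shown to cost only a universal constant. The $n^{3\delta}$ actually comes from the \emph{outermost} scales $\ell\lesssim n^{3\delta}$, where the prescribed window $\ell^{1+\delta}$ is so narrow that each such scale pays a genuine $e^{-O(1)}$ on top of the bulk, and from the accumulated LCLT error $\sum_\ell\ell^{3\delta-1}\asymp n^{3\delta}$. For Part~(2), the intuition of ``sharing the cost up to scale $l$'' is right, but your description of what happens for $k<l$ (``imposing both constraints is a single event'') is not what the paper does: it keeps $x$'s constraints on $I_l=\{1,\ldots,l-3\}\cup\{\widetilde l+1,\ldots,n\}$, drops $y$'s constraints below scale $\widetilde l$, observes the measurability $\Gamma^x_n(I_l)\in\mathcal G^y_{(r_{n,l},r_{n,\widetilde l})}$, applies Lemma~\ref{preliminary} once per value $m\le n^{5/2}$ of $\widetilde N^y_{n,\widetilde l}$ (not ``per excursion''), and reconstructs the dropped $y$-constraints via the single-site lower bound $\p[\dot\Gamma^y_n(\{1,\ldots,\widetilde l+1\})]\ge\exp\{-2l-n^{\cdots}\}$.
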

\begin{proof}[Proof of \eqref{second}]
We write $Q_n:=\inf_{x\in U_n}\e[Y'(n,x)]$. Then it follows from Proposition \ref{moment} (1) that uniformly in $x\in U_n$,
$Q_n\asymp \e[Y'(n,x)]$. And consequently, $I'_n\asymp K_n^2 Q_n$.
	
	Note that the definition of $l(x,y)$ implies that $\abs{x-y}\le 2r_{n,l(x,y)-1}$. Since there are at most $C_0r^2_{n,l-1}\asymp K_n^2/e^{2l}$ points in the disc $D(x,2r_{n,l-1})$, it follows from Proposition \ref{moment} (2) that
	\begin{align}\label{second2}
		\begin{aligned}
			&\quad\,\sum_{\substack{x,y\in U_n\\2r_{n,n-3\log n}\le \abs{x-y}\le 2r_{n,0}}}\e\left[Y'(n,x)Y'(n,y)\right]\\
			&\le \sum_{x\in U_n}\sum_{l=1}^{n-3\log n+1}\sum_{y\in U_n:l(x,y)=l}\exp\left\{2l+n^{(1-2\delta)\vee3\delta+o(1)}\right\}\e[Y'(n,x)]\e[Y'(n,y)]\\
			&\asymp K_n^2 \sum_{l=1}^{n-3\log n+1}\frac{K_n^2}{e^{2l}}\exp\left\{2l+n^{(1-2\delta)\vee3\delta+o(1)}\right\}Q_n^2\\
			&\asymp (I'_n)^2\exp\left\{n^{(1-2\delta)\vee3\delta+o(1)}\right\}.
		\end{aligned}
	\end{align}
	
	On the other hand,
	\begin{align}\label{second3}
		\begin{aligned}
			&\quad\,\sum_{\substack{x,y\in U_n\\ \abs{x-y}\le 2r_{n,n-3\log n}}}\e\left[Y'(n,x)Y'(n,y)\right]\lesssim \sum_{\substack{x,y\in U_n\\ \abs{x-y}\le 2r_{n,n-3\log n}}}Q_n\lesssim n^{24} K_n^2 Q_n\\
			&\lesssim n^{24} K_n^2 Q_n \exp\left\{n^{(1-2\delta)\vee3\delta+o(1)}\right\} I'_n \asymp (I'_n)^2\exp\left\{n^{(1-2\delta)\vee3\delta+o(1)}\right\},
		\end{aligned}
	\end{align}
	where the third inequality comes from Proposition \ref{moment} (1).
	
	Combining \eqref{second2} and \eqref{second3} and then using the second moment method gives \eqref{second}.
\end{proof}

\subsubsection{Proof of Proposition \ref{moment} (1)}\label{first}
First, we shall show that uniformly in $x\in U_n$,
\begin{align}\label{equiv1}
	\e[Y'(n,x)]=\big(1-O(n^{-1})\big) \e[Y(n,x)].
\end{align}
Based on this, it suffices to prove the following lemma.
\begin{lem}\label{Y}
	\begin{align*}
	\inf_{x\in U_n}E[Y(n,x)]\gtrsim \sup_{x\in U_n}E[Y(n,x)]\ge \exp\left\{-2n-O\big(n^{[(1-\delta)\vee 3\delta]+o(1)}\big)\right\}.
\end{align*}
\end{lem}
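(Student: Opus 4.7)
The plan is to adapt the strategy of \cite{Rosen05} (Sections 4--5) to the modified radii $r_{n,k}=e^{n-k}n^9$: decompose $\p[Y(n,x)=1]$ via the strong Markov property applied at the successive hitting times of $\partial D(x,r_{n,k})$, $k=1,\ldots,n+1$, and justify the near-independence of excursions across scales using Lemma \ref{preliminary}.

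By \eqref{hitting2}, given $N^x_{n,k}=m$ the conditional law of $N^x_{n,k+1}$ is approximately that of a sum of $m$ i.i.d.\ geometric random variables with success probability $1-q_k$, where
\[q_k=\frac{\log(r_{n,k-1}/r_{n,k})}{\log(r_{n,k-1}/r_{n,k+1})}=\frac{1}{2}\quad\text{for }1\le k\le n-1,\qquad q_n=\frac{1}{1+3\log n}.\]
The error from Lemma \ref{preliminary} at scale $k$ is $O(m_k n^{-3}\log n)$ and accumulates to $o(1)$ over all $n$ scales since $m_k\le Cn^2$. Consequently, for $k\le n-1$, $(N^x_{n,k})$ is a critical inhomogeneous random walk with $\e[N^x_{n,k+1}\mid N^x_{n,k}]=N^x_{n,k}$ and $\mathrm{Var}[N^x_{n,k+1}\mid N^x_{n,k}]=2N^x_{n,k}$.

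The centered process $\Delta_k:=N^x_{n,k}-2k^2$ has conditional drift $-(4k+2)$ and variance $\asymp 4k^2$ per step, so requiring $|\Delta_k|\le k^{1+\delta}$ for $k=2,\ldots,n$ forces a positive $(2+O(1/k))$-standard-deviation deviation at each step. By the local CLT for the negative binomial distribution (or a direct Stirling computation of the point probability $\binom{m+j-1}{j}(1/2)^{m+j}$ at $m\approx 2k^2$, $j\approx 2(k+1)^2$), each per-step probability integrated over the tube of width $k^{1+\delta}$ is $\gtrsim k^\delta e^{-2}/\sqrt{2\pi}$. Multiplying across $k=1,\ldots,n-1$ and carefully tracking the correlations induced by the tube constraint (via a tilting/change-of-measure argument along the drift-canceling deterministic path, whose action is $\frac{1}{2}\sum_{k=1}^{n-1}(4k+2)^2/(4k^2)=2n+O(1)$) yields a product of order $e^{-2n}$ with subexponential corrections of magnitude at most $\exp(n^{(1-\delta)\vee 3\delta+o(1)})$, as desired.

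The initial probability $\p[N^x_{n,1}=1]$ is bounded below by a positive constant via \eqref{hitting} (using $|x|\asymp r_{n,0}$ for $x\in U_n$), and the final transition from $N^x_{n,n}\approx 2n^2$ to $N^x_{n,n+1}\in I_{n+1}$ is of polynomial order in $n$, handled by the negative binomial formula with parameter $q_n\asymp 1/\log n$. Uniformity in $x\in U_n$ follows because all relevant hitting probabilities depend on $|x|$ only through bounded multiplicative constants. The hard part will be the multi-scale bookkeeping when invoking Lemma \ref{preliminary}: its hypothesis $R_n/r^0_n\ge n^3$ forces the choice $R_n=K_n$ at each scale $k$, permitting direct use only for $k\ge 3\log n$; for smaller $k$ the excursion structure must be analyzed by more elementary means, but this affects only $O(\log n)$ scales and contributes $O(1)$ to the exponent.
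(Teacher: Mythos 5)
Your proposal correctly identifies the MJP/upcrossing approximation and correctly computes the leading $e^{-2n}$ term from the ``action'' $\frac{1}{2}\sum_k (4k+2)^2/(4k^2)$ — this matches the paper's reduction of $\e[Y(n,x)]$ to $\sum_{\bar m\in\mathcal M}\prod_\ell p(m_\ell,m_{\ell+1})$ in Lemma \ref{relate} and the LLT computation in the proof of Proposition \ref{pp}. However, there is a genuine gap in how you bound the subexponential correction, which is exactly where the work lies.

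The claim that ``each per-step probability integrated over the tube of width $k^{1+\delta}$ is $\gtrsim k^\delta e^{-2}/\sqrt{2\pi}$'' followed by ``multiplying across $k=1,\ldots,n-1$'' is incorrect reasoning. Because $\Delta_{k+1}$ is conditioned on $\Delta_k$, you cannot integrate each marginal over the full width $k^{1+\delta}$ and multiply; if that were legitimate it would produce a factor $\prod_k k^\delta \asymp (n!)^\delta$, which is absurdly large. The correct object to estimate is the probability that the whole centered path $(\Delta_\ell)_{\ell\le n}$, which after Gaussian approximation becomes a time-inhomogeneous random walk (equivalently, a time-changed Brownian motion), stays in the tube $|\Delta_\ell|\le\ell^{1+\delta}$ for all $\ell$. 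Controlling this tube-confinement probability is precisely the content of the paper's Lemma \ref{maxabs}, which reduces it to the small-value probability of $\sup_{t\le 4n^3}|B_t|$ and yields $\exp\{-O(n^{3-2\rho(1+\delta)})\}$ for the scales $\ell>n^\rho$; the multi-scale decomposition in \eqref{decompose} then handles the small $\ell$'s and produces the exponent $(1-2\delta)\vee 3\delta$. Your proposal asserts the conclusion (``corrections of magnitude at most $\exp(n^{(1-\delta)\vee 3\delta+o(1)})$'') without any mechanism that could produce it, and the tilting argument you gesture at would still leave you needing exactly this tube-probability estimate for the tilted process.

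A secondary, smaller issue: for Lemma \ref{Y} itself the paper never invokes Lemma \ref{preliminary}. The comparison between excursion counts and MJP upcrossings is done directly in Lemma \ref{relate} via \eqref{hitting2}, which gives multiplicative errors $(1+O(n^{-6}))^{O(n^3)}=1+O(n^{-3})$ uniformly over paths in $\mathcal{M}$ — no decoupling lemma is required. Lemma \ref{preliminary} is needed for \eqref{equiv1} (passing from $Y$ to $Y'$) and for the second-moment bound, so your worry about the $k<3\log n$ scales with respect to that lemma is misplaced for this particular statement.
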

\begin{proof}[Proof of \eqref{equiv1}]
It is plain that $\e[Y'(n,x)]\le \e[Y(n,x)]$. Thus we need only to show $\e[Y'(n,x)]\ge\big(1-O(n^{-1})\big) \e[Y(n,x)]$. Recall that $\mathcal{G}^x_{(r_{n,n},r_{n,n+1})}$ is the $\sigma$-algebra of excursions from $\partial D(x,r_{n,n})$ to $\partial D(x,r_{n,n+1})$. Since $Y(n,x)\in \mathcal{G}^x_{(r_{n,n},r_{n,n+1})}$,
\begin{align}\label{YY}
\begin{aligned}
	&\e[Y'(n,x)]\\
 \ge\;&\e\left\{\p\left(\sum_{j=1}^{\frac{2n^2-n^{1+\delta}}{3\log n}} L^{x,j}\ge \frac{4}{\pi}(\log K_n)^2-(\log K_n)^{1+\delta'}\,\bigg\vert\,\mathcal{G}^x_{(r_{n,n},r_{n,n+1})}\right);\,Y(n,x)=1\right\},	
\end{aligned}
\end{align}
where $L^{x,j}$ is the number of visits to $x$ during the $j$-th excursion from $\partial D(x,r_{n,n+1})$ to $\partial D(x,r_{n,n})$. Let $m_n:=\frac{2n^2-n^{1+\delta}}{3\log n}$. The main idea is to apply Lemma \ref{preliminary} to the event $\Big\{\sum_{j=1}^{m_n} L^{x,j}\ge \frac{4}{\pi}(\log K_n)^2-(\log K_n)^{1+\delta'}\Big\}$.
 We claim that
\begin{align}\label{cH}
	\left\{\sum_{j=1}^{m_n} L^{x,j}\ge \frac{4}{\pi}(\log K_n)^2-(\log K_n)^{1+\delta'}\right\}\in \mathcal{H}^x_{(r_{n,n},r_{n,n+1},n^6),j}(m_n)
\end{align}
and satisfies Condition \eqref{On} with parameter $1$ for $n$ large.

 Indeed, the event can be written as a disjoint union of events
$\bigcap_{j=1}^{m_n}\left\{L^{x,j}=a_j\right\}$,
for non-negative integers $\{a_j:j=1,\ldots,m_n\}$ with $$\sum_{j=1}^{m_n}a_j\ge \frac{4}{\pi}(\log K_n)^2-(\log K_n)^{1+\delta'},$$ which implies \eqref{cH}.

For Condition \eqref{On}, we note that for each excursion, conditionally on starting from $y\in \partial D(x,r_{n,n+1})$, its number of visits at $x$ has the same law as the product of a Bernoulli random variable with success probability $\p^{y-x}(\mathsf{H}_0<\mathsf{H}_{\partial D(0,r_{n,n})})$ and an independent $\{1,2,\ldots\}$-valued geometric random variable with success probability $\p^0(\mathsf{H}_{\partial D(0,r_{n,n})}<\mathsf{H}^+_0)$, where $\mathsf{H}^+_0=\inf\{t\ge1 :S_t=0\}$. It follows from \eqref{hitting} that there exists constant $c_0>0$ such that
\begin{align*}
	\p^{y-x}(\mathsf{H}_0<\mathsf{H}_{\partial D(0,r_{n,n})})&=\frac{\frac{2}{\pi}\cdot 3\log(n)}{\frac{2}{\pi}\cdot 9\log(n)+c_0}\left[1+O\left(\frac{1}{n^6\log n}\right)\right],\\
	\p^0(\mathsf{H}_{\partial D(0,r_{n,n})}<\mathsf{H}^+_0)&=\frac{1}{\frac{2}{\pi}\cdot 9\log(n)+c_0}\left[1+O\left(\frac{1}{n^6\log n}\right)\right],
\end{align*}
from which we can easily deduce Condition \eqref{On}.

Moreover, since the excursions are independent conditionally on their starting points, it is also seen from the above analysis that
\begin{align*}
	\e\Bigg[\sum_{j=1}^{\frac{2n^2-n^{1+\delta}}{3\log n}} L^{x,j}\Bigg]&=\frac{4}{\pi}n^2-\frac{18}{\pi}n^{1+\delta}+O\left(\frac{1}{n^4\log n}\right)\\
	&=\frac{4}{\pi}(\log K_n)^2-\frac{18}{\pi}(\log K_n)^{1+\delta}+o\left((\log K_n)^{1+\delta}\right).
\end{align*}
Since $\delta<\delta'$, it is not hard to use the properties of geometric distribution to show that
\[\p\left(\sum_{j=1}^{\frac{2n^2-n^{1+\delta}}{3\log n}} L^{x,j}\ge \frac{4}{\pi}(\log K_n)^2-(\log K_n)^{1+\delta'}\right)=1-O(n^{-1}).\]
Hence applying Lemma \ref{preliminary} to \eqref{YY} gives \eqref{equiv1}.
\end{proof}

Turning to Lemma \ref{Y}, the basic idea is to relate $N^x_{n,k}$ to the number of upcrossings of some Markov jump process (MJP) in the same way as that in Section 4 of \cite{Rosen05}, which translates the estimates of $\e(n,x)$ into estimates on upcrossings of MJP. The hardest part of the estimates is to handle the contribution made by the deviation $\abs{N^x_{n,k}-2k^2}$. We shall show that this contribution can be bounded by the small value probability of the maximum absolute value of one-dimensional Brownian motion.

Let us introduce the MJP which we are concerned with. Let $\bar{m}=(m_2,\ldots,m_n)\in \N^{n-1}$ and
$\mathcal{M}=\mathcal{M}_n(\delta):=\left\{\bar{m}\in \N^{n-1}:\forall\,k=2,\ldots,n,\,\abs{m_k-2k^2}\le k^{1+\delta}\right\}$.
Let $X^0$ be a MJP on $\{0,\ldots,n+1\}$ which is killed upon hitting $0$ and has transition probability
\begin{align*}
	p^0(x,x\pm1)&=1/2\text{ for }x\in[1,n-1],\\
	p^0(n,n+1)&=1-p^0(n,n-1)=\frac{1}{1+3\log n},\quad p^0(n+1,n)=1.
\end{align*}
In particular, the trace of $X^0$ on $\{0,\ldots,n\}$, denoted by $X$, is a simple random walk on $\{0,\ldots,n\}$ killed upon hitting $0$.
\begin{rem}[Basics on upcrossings]
	Denote by $u_\ell$ the number of upcrossings of $X^0$ from $\ell-1$ to $\ell$. Then $u_\ell$ is a Markov process whose transition laws are negative binomial distributions:
\begin{align*}
	\big(u_{\ell+1}\,\vert\,u_\ell=b\big)&\overset{d}{=}\sum_{j=1}^b \gamma^\ell_j,
\end{align*}
where for $\ell=1,\ldots,n-1$ (resp. $\ell=n$), $\{\gamma^\ell_j:j\ge 1\}$ are i.i.d. $\{0,1,\ldots\}$-valued geometric random variables with success probability $1/2$ (resp. $\frac{3\log n}{1+3\log n}$).
\end{rem}

\begin{lem}\label{relate}
	Let $p(a,b):=\p^1(u_{\ell+1}=b\,\vert\,u_\ell=a)$ for $\ell=1,\ldots,n-1$. Then uniformly in $x\in U_n$,
	\[\e[Y(n,x)]\asymp\sum_{\bar{m}\in\mathcal{M}}\prod_{\ell=2}^{n-1}p(m_\ell,m_{\ell+1}).\]
\end{lem}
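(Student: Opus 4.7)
The plan is to identify $\e[Y(n,x)]$ with a sum over trajectories of the Markov jump process $X^0$ by tracking the successive hits of $S$ on the nested spheres $\partial D(x, r_{n,k})$. Concretely, I would first show that by the strong Markov property together with the concentric-circle hitting estimates \eqref{hitting} and \eqref{hitting2}, the sequence of these hits forms an approximate Markov chain on $\{0,1,\ldots,n+1\}$ whose transitions match those of $X^0$ up to a factor $1 + O(n^{-3})$. Indeed, $\log(r_{n,k-1}/r_{n,k}) = 1$ for $k = 2,\ldots,n$, so the up/down transition from $\partial D(x, r_{n,k-1})$ is $1/2 + O(n^{-3})$; the scale jump $r_{n,n}/r_{n,n+1} = n^3$ produces the transition probability $1/(1+3\log n)$ at level $n$; and for $x \in U_n$ (so $|x| \asymp r_{n,0}$) the outgoing transition from $\partial D(x,r_{n,0})$ into the killing boundary $\partial D(0,K_n)$ is $1/2 + o(1)$ by the same logarithmic estimates.

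With this identification, the counts $(N^x_{n,k})_{k=1}^{n+1}$ inherit the upcrossing law of $X^0$. Conditionally on $N^x_{n,k-1} = a$, each of the $a$ incoming excursions to $\partial D(x, r_{n,k-2})$ generates, independently, a geometric number of further sub-excursions reaching $\partial D(x, r_{n,k-1})$ before leaving outward; this is exactly the description $u_k = \sum_{j=1}^{a} \gamma_j^{k-1}$ with success probabilities $1/2$ for $k \le n-1$ and $3\log n/(1+3\log n)$ for $k = n+1$. Thus, up to uniform constants, the joint law of $(N^x_{n,k})_{k=1}^{n+1}$ is that of $(u_\ell)_{\ell=1}^{n+1}$, and I can decompose
\[
\e[Y(n,x)]
= \p(N^x_{n,1} = 1)\sum_{\bar m \in \mathcal{M}}\p(N^x_{n,k}=m_k,\,2\le k\le n \mid N^x_{n,1}=1)\,\p(N^x_{n,n+1}\in I_n\mid N^x_{n,n}=m_n),
\]
where $I_n = [\frac{2n^2 - n^{1+\delta}}{3\log n}, n^3]$. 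The first-step factor is $\Theta(1)$ uniformly in $x \in U_n$ by the outermost-transition computation; the rightmost factor is $\Theta(1)$ because the conditional mean $m_n/(3\log n) \asymp 2n^2/(3\log n)$ lies comfortably inside $I_n$ and the negative-binomial concentrates around its mean. The only intermediate factor not appearing in the claimed product is $p(1,m_2) = (1/2)^{m_2}$, but the sum of $p(1,m_2)$ over the finitely many admissible values $|m_2 - 8| \le 2^{1+\delta}$ is a fixed positive constant, so it can be absorbed into the $\asymp$ constants.

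The main technical obstacle is controlling the cumulative error from replacing the true excursion law by the MJP law across all $n$ scales. A naive per-excursion bound $1+O(n^{-3})$ compounded over $\Theta(n^3)$ excursions would be disastrous. This is precisely what Lemma \ref{preliminary} is designed to handle: applied stagewise with $m = N^x_{n,k} = O(n^2) \le n^{5/2}$, it yields a joint multiplicative error $(1\pm O(mn^{-3}\log n))$ \emph{per stage}, and the product over the $n$ stages is $\exp(O(n^{-1}\log n \cdot n)) = O(1)$, which is absorbed into $\asymp$. Once this bound is in place, combining the stagewise comparisons with the boundary-factor analysis above yields $\e[Y(n,x)] \asymp \sum_{\bar m\in\mathcal{M}} \prod_{\ell=2}^{n-1} p(m_\ell, m_{\ell+1})$, as claimed.
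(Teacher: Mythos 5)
Your high-level plan---relate the nested-circle crossing counts $(N^x_{n,k})$ to the upcrossings of the MJP $X^0$, and show the entry, first-step, and last negative-binomial factors are each $\Theta(1)$ uniformly in $x\in U_n$---matches the paper's. However, your treatment of the cumulative error is wrong, and the gap would not be fixable by the route you propose.

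First, you misidentify the per-crossing error. The hitting estimate \eqref{hitting2} has error $O(r^{-1})$ where $r$ is the inner radius, and since the smallest radius in the whole construction is $r_{n,n+1}=n^6$, the per-crossing multiplicative error is $O(n^{-6})$, not $O(n^{-3})$. The whole point of taking $r_{n,n+1}=n^6$ (rather than a constant) is precisely so that a direct crossing-by-crossing comparison works: the total number of crossings for $\bar m\in\mathcal{M}$ is $2(|\bar m|+n^3)=O(n^3)$, so $(1+O(n^{-6}))^{O(n^3)}=1+O(n^{-3})$, uniformly over $\mathcal{M}$ and $x\in U_n$, and this factor is absorbed by $\asymp$. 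There is no "disaster" and no need for the decoupling machinery: even the weaker bound $O(n^{-3})$ per crossing that you write would give $(1+O(n^{-3}))^{O(n^3)}=\Theta(1)$, which is already harmless.

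Second, even setting that aside, the invocation of Lemma \ref{preliminary} does not do what you claim. That lemma compares $\p^{y_0}(H_n\mid\mathcal{G}^x_{(R_n,r_n^0)})$ with $\p^{y_1}(H_n)$, i.e.~it decouples what happens inside the inner annulus from the outer excursion structure; it is not a tool for comparing the random walk's transition kernel between spheres with the MJP's. And your arithmetic is off: with $m=\Theta(n^2)$ per stage, the per-stage factor $1+O(mn^{-3}\log n)=1+O(n^{-1}\log n)$ compounded over $n$ stages gives $\exp\{O(\log n)\}=n^{O(1)}$, a polynomial factor, not $O(1)$. A polynomial correction is not absorbable by $\asymp$. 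The paper uses Lemma \ref{preliminary} only for the decoupling steps in the proof of \eqref{equiv1} and in Proposition \ref{moment}(2), not in Lemma \ref{relate} itself, whose proof is the direct path-by-path comparison sketched above together with the observation that $\p^z(h=\varphi)=(1+O(n^{-6}))^{2(|\bar m|+n^3)}\p^1(X^0=\varphi)$ uniformly over $\varphi\in\mathcal{H}^0_n(\bar m)$, $\bar m\in\mathcal{M}$, and $z\in\partial D(x,r_{n,1})$.
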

\begin{proof}
	For any $\bar{m}$, we let $\mathcal{H}_n(\bar{m})$ (resp. $\mathcal{H}^0_n(\bar{m})$) be the collection of sample paths of $X$ (resp. $X^0$) such that the number of upcrossings from $\ell-1$ to $\ell$ equal to $m_{\ell}$ for $\ell\in[2,n]$ (resp. and the number of crossings from $n$ to $n+1$ is $\ge \frac{2n^2-n^{1+\delta}}{3\log n}$). Then we have
\begin{align*}
\begin{aligned}
	\sum_{\bar{m}\in \mathcal{M}}\p^1[X^0\in\mathcal{H}^0_{n}(\bar{m})]&=\sum_{\bar{m}\in \mathcal{M}}\p^1(u_2=1)\prod_{\ell=2}^n\p^1(u_{\ell+1}=m_{\ell+1}\,\vert\,u_\ell=m_\ell)\\
	&\overset{(*)}{\asymp} \sum_{\bar{m}\in\mathcal{M}}\prod_{\ell=2}^{n-1}p(m_\ell,m_{\ell+1})\Big(\asymp \sum_{\bar{m}\in \mathcal{M}}\p^1[X\in\mathcal{H}_n(\bar{m})]\Big),	
\end{aligned}
\end{align*}
where $(*)$ follows from $$\p^1(u_2=1)=\p^1(X^0_1=2,\,X^0_{T^{0,+}_1+1}=0)=1/4$$
(here $T^{0,+}_1:=\inf\{j\ge 1:X^0_j=1\}$) and the fact that there exists universal $c>0$ such that for any $n\ge 1$ and $\abs{b-2n^2}\le n^{1+\delta}$,
\[\p\left(\sum_{j=1}^{b} \gamma^n_j\in\left[\frac{2n^2-n^{1+\delta}}{3\log n},n^3\right]\right)>c.\]

Thus it suffices to show that uniformly in $x\in U_n$,
\begin{align*}
	\e[Y(n,x)]\asymp \sum_{\bar{m}\in \mathcal{M}}\p^1[X^0\in\mathcal{H}^0_{n}(\bar{m})].
\end{align*}
To this end, we let
$t_0:=\mathsf{H}_{\partial D(x,r_{n,1})}$ and define $t_1,\ldots,t_q$ ($q$ is random) to be the successive hitting times after $t_0$ of different elements of
	$\{\partial D(x,r_{n,0}),\ldots,\partial D(x,r_{n,n+1})\}$
	until the first crossing from $\partial D(x,r_{n,1})$ to $\partial D(x,r_{n,0})$. Setting $\Phi(y)=k$ if $y\in \partial D(x,r_{n,k})$, let $h_j:=\Phi(t_j)$ for $j=1,\ldots,q$. Fixing $\bar{m}$, it follows from \eqref{hitting2} that uniformly for any sample path $\varphi\in \mathcal{H}^0_n(\bar{m})$ and $z\in \partial D(x,r_{n,1})$,
	\begin{align*}
		\p^z\left(h=\varphi\right)=\left(1+O(n^{-6})\right)^{2(\abs{\bar{m}}+n^3)}\p^1\left(X^0=\varphi\right),
	\end{align*}
	where $\abs{\bar{m}}=\sum_{j=2}^n m_j$ and the power comes from the fact that the length of the path $\varphi\in \mathcal{H}^0_n(\bar{m})$ is bounded by $2(\abs{\bar{m}}+n^3)$. Since uniformly in $\bar{m}\in \mathcal{M}$, $\left(1+O(n^{-6})\right)^{2(\abs{\bar{m}}+n^3)}=1+O(n^{-3})$, we have uniformly in $z\in \partial D(x,r_{n,1})$,
	\[\sum_{\bar{m}\in\mathcal{M}}\p^z(h\in \mathcal{H}^0_n(\bar{m}))=\left(1+O(n^{-3})\right)\sum_{\bar{m}\in\mathcal{M}}\p^1(X^0\in \mathcal{H}^0_n(\bar{m})).\]
	Hence by the strong Markov property, we have uniformly in $x\in U_n$,
	\begin{align*}
	\begin{aligned}
 \e[Y(n,x)]
		=\;&\e\left(1_{\{\mathsf{H}_{\partial D(x,r_{n,1})}<\mathsf{H}_{\partial D(0, K_n)}\}}\sum_{m\in \mathcal{M}}\e^{S_{t_0}}\left[1_{\{h\in \mathcal{H}^0_n(\bar{m})\}}\p^{S_{t_q}}\left(\mathsf{H}_{\partial D(0,K_n)}<\mathsf{H}_{\partial D(x,r_{n,1})}\right)\right]\right)\\
		\asymp\; &\sum_{\bar{m}\in \mathcal{M}}\p^1[X^0\in\mathcal{H}^0_{n}(\bar{m})],		
	\end{aligned}
	\end{align*}
	where we use the fact that there exists constant $c>0$ such that for all $x\in U_n$, $y\in \partial D(x,r_{n,0})\cup\{0\}$, and $n\ge 1$,
	\begin{align*}
	c<\p^y\left(\mathsf{H}_{\partial D(x,r_{n,1})}<\mathsf{H}_{\partial D(x, \frac{1}{2}K_n)}\right)
	\le \p^y\left(\mathsf{H}_{\partial D(x,r_{n,1})}<\mathsf{H}_{\partial D(0, K_n)}\right)\\
	\le \p^y\left(\mathsf{H}_{\partial D(x,r_{n,1})}<\mathsf{H}_{\partial D(x, 2K_n)}\right)<1-c.
	\end{align*}
This finishes the proof.
 \end{proof}

\begin{proof}[Proof of Lemma \ref{Y}]
It can be seen from Lemma \ref{relate} that $$\inf_{x\in U_n}E[Y(n,x)]\asymp \sup_{x\in U_n}E[Y(n,x)].$$
For the second inequality in Lemma \ref{Y}, it suffices to show the following proposition.\phantom{\qedhere}

\begin{pro}\label{pp}
	\[\sum_{\bar{m}\in \mathcal{M}}\prod_{\ell=2}^{n-1}p(m_\ell,m_{\ell+1})\ge \exp\left\{-2n-n^{[(1-2\delta)\vee3\delta]+o(1)}\right\}.\]
\end{pro}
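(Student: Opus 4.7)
The plan is to lower-bound the sum by the probability that the Markov chain $(u_\ell)_{\ell\ge 2}$ with transition kernel $p$, started from $u_2=8$, stays in the tube $I_\ell := [2\ell^2 - \ell^{1+\delta}, 2\ell^2 + \ell^{1+\delta}]$ for $3\le \ell\le n$, and to analyze this probability via an exponential change of measure combined with a Brownian small-deviation estimate. Observe that
$$\sum_{\bar m \in \mathcal{M}} \prod_{\ell=2}^{n-1} p(m_\ell, m_{\ell+1}) \;\ge\; \p\big(u_\ell \in I_\ell \text{ for all } 3\le\ell\le n \,\big|\, u_2 = 8\big),$$
and since $(u_\ell)$ is a $\p$-martingale starting from a constant while $2\ell^2$ grows quadratically, following the tube is a genuine large-deviation event.

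For the precise Gaussian analysis, I would invoke an analog of Lemma \ref{stirling01} for sums of geometric random variables with success probability $1/2$ (hence mean $1$ and variance $2$):
$$p(a,b) = \frac{1}{2\sqrt{\pi a}}\exp\!\left(-\frac{(b-a)^2}{4a} + O\!\left(\tfrac{1}{\sqrt{a}} + \tfrac{|b-a|^3}{a^2}\right)\right).$$
Parametrizing $m_\ell = 2\ell^2 + v_\ell$ with $|v_\ell| \le \ell^{1+\delta}$ and Taylor expanding yields
$$\frac{(m_{\ell+1}-m_\ell)^2}{4m_\ell} = 2 + \frac{v_{\ell+1}-v_\ell}{\ell} + \frac{(v_{\ell+1}-v_\ell)^2}{8\ell^2} + O\!\left(\tfrac{|v_\ell|+1}{\ell^2}\right).$$
Summing from $\ell=2$ to $n-1$, the constant $2$ contributes the leading $2(n-2)$; the linear term $\sum(v_{\ell+1}-v_\ell)/\ell$ telescopes (Abel summation) to $O(n^{\delta})$; the $O(|v_\ell|/\ell^2)$ remainder sums to $O(n^{\delta})$; and the cubic LCLT error sums to $O\big(\sum_\ell \ell^{3\delta-1}\big) = O(n^{3\delta})$, since $|b-a|\le 4\ell+O(\ell^{1+\delta})$ on admissible trajectories.

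Next, I would introduce an exponentially tilted measure $\widetilde\p$ that tilts each geometric random variable by $\lambda_\ell = 1 + 1/\ell + O(1/\ell^2)$, chosen so that $\widetilde\e[u_{\ell+1}\mid u_\ell=2\ell^2] = 2(\ell+1)^2$. An explicit computation shows $\log(d\p/d\widetilde\p) = -2n + O(n^{\delta})$ on trajectories inside the tube (again via Abel summation). Under $\widetilde\p$, the deviation process $v_\ell := u_\ell - 2\ell^2$ is a near-martingale with increment variance $\sim \ell^2$, hence typical scale $\sim \ell^{3/2}$ through the accumulated quadratic variation $\sum_{k\le \ell} k^2 \sim \ell^3/3$. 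Since $\delta<1/2$, the confinement $|v_\ell|\le \ell^{1+\delta}$ for all $\ell$ is a small-deviation event; via a Skorokhod embedding with time change $\tau_\ell \sim \tfrac{4}{3}\ell^3$ and the classical Brownian estimate $\p(\sup_{t\le T}|B_t|\le\epsilon) \asymp e^{-cT/\epsilon^2}$ applied with $T \sim n^3$ and $\epsilon\sim n^{1+\delta}$, one obtains $\widetilde\p(\text{tube}) \ge \exp(-C n^{1-2\delta})$. Combining these estimates yields
$$S \;\ge\; \exp\!\big(-2n - C\,(n^{3\delta} + n^{1-2\delta})\big) \;\ge\; \exp\!\big(-2n - n^{[(1-2\delta)\vee 3\delta]+o(1)}\big).$$

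The hardest step will be making the small-deviation argument rigorous for the discrete, inhomogeneous martingale $v_\ell$: the increment variance grows like $\ell^2$ and the tube boundary itself is $\ell$-dependent, so one must control the Brownian coupling error uniformly in $\ell$ over the full range; the flexibility $o(1)$ in the final exponent is what absorbs these coupling errors. The structure of the result reflects exactly the balance between two competing errors, $n^{1-2\delta}$ from tube confinement under $\widetilde\p$ and $n^{3\delta}$ from the cubic LCLT correction, optimized at $\delta=1/5$ to yield the exponent $3/5$ that ultimately feeds into the $(\log n)^{3/5+\epsilon}$ term of Proposition \ref{lower}.
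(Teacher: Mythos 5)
Your strategy --- a local CLT for the geometric transition kernel, recentering $m_\ell=2\ell^2+v_\ell$, Abel summation to kill the linear term, and a Brownian small-deviation estimate for the remaining tube confinement --- matches the paper's plan. The route differs at the end: the paper works directly with the Gaussian kernels $b_\ell(\cdot,\cdot)$ produced by the LCLT, compares the resulting sum over $\Delta$ to an integral of the density of a time-changed Brownian motion (Lemma~\ref{maxabs}), and thereby never needs a discrete-to-continuous coupling; you interpose an exponential tilting of the geometrics and a Skorokhod embedding, which introduces a coupling step that the paper's route avoids entirely.

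The genuine gap is in the small-deviation step. You invoke $\p(\sup_{t\le T}|B_t|\le\epsilon)\asymp e^{-cT/\epsilon^2}$ with $T\sim n^3$ and $\epsilon\sim n^{1+\delta}$, but that only lower-bounds the probability of the \emph{weaker} event $\{|v_\ell|\le n^{1+\delta}\ \forall\ell\}$, not of the required tube $\{|v_\ell|\le\ell^{1+\delta}\ \forall\ell\}$: since $\ell^{1+\delta}\le n^{1+\delta}$, a lower bound on the former does not transfer to the latter. Plugging in the maximum tube width goes in the wrong direction for a lower bound. The paper handles the $\ell$-dependent boundary via the telescoping blocking in~\eqref{decompose}: Lemma~\ref{maxabs} gives the small-deviation bound on a single block $(n^\rho,n]$ using the \emph{minimum} width $n^{\rho(1+\delta)}$ on that block (the correct direction), iterating over geometrically nested blocks $[q_j,q_{j+1}]$ with $q_m=n^{3\delta/\rho^m}$ supplies the $n^{(1-2\delta)\vee 2\delta+o(1)}$ contribution, and the innermost block $[2,q_0]=[2,n^{3\delta}]$ is estimated crudely to produce the $n^{3\delta}$ term. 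You flag the $\ell$-dependence of the tube as a difficulty, but the asserted bound $\widetilde\p(\mathrm{tube})\ge e^{-Cn^{1-2\delta}}$ does not actually follow from the step you wrote down; the blocking argument, which your proposal omits, is what delivers it.
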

\begin{proof}
	It is enough to fix $\varepsilon>0$ and show that for $n$ large,
\begin{align}\label{suffices}
	\sum_{\bar{m}\in\mathcal{M}}\prod_{\ell=2}^{n-1}p(m_\ell,m_{\ell+1})\ge \exp\left\{-2n-n^{[(1-2\delta)\vee3\delta]+\varepsilon}\right\}.
\end{align}

By the local limit theorem (Cf. Section 2.3.1 of \cite{lawler2010random}), there exists $\rho>0$ such that uniformly in all $b,b'\ge 0$ with $\abs{b'-b}<\rho b$,
\[p(b,b')=\frac{1}{2\sqrt{\pi b}}\exp\left\{-\frac{(b-b')^2}{4b}+O\left(\frac{1}{\sqrt{b}}+\frac{\abs{b-b'}^3}{b^2}\right)\right\}.\]
In particular, since for any $\bar{m}\in\mathcal{M}$, $\abs{m_\ell-2\ell^2}\le \ell^{1+\delta}$ for all $\ell\in[2,n]$, we have uniformly in $\bar{m}\in \mathcal{M}$,
\begin{align*}
	p(m_\ell,m_{\ell+1})&=\frac{1}{2\sqrt{\pi m_\ell}}\exp\left\{-\frac{(m_{\ell+1}-m_\ell)^2}{4m_\ell}+O\left(\frac{1}{\sqrt{m_\ell}}+\frac{(m_{\ell+1}-m_\ell)^3}{m_\ell^2}\right)\right\}\\
	&=\frac{1}{\sqrt{2\pi}2\ell}\exp\left\{-\frac{(m_{\ell+1}-m_\ell)^2}{8\ell^2}+O(\ell^{3\delta-1})\right\}.
\end{align*}
Therefore
\begin{align*}
	\prod_{\ell=2}^{n-1}p(m_\ell,m_{\ell+1})=\left(\prod_{\ell=2}^{n-1}\frac{1}{\sqrt{2\pi}2\ell}\right)\exp\left\{-\sum_{\ell=2}^{n-1}\frac{(m_{\ell+1}-m_\ell)^2}{8\ell^2}+O(n^{3\delta})\right\}.
\end{align*}

For any $\bar{m}\in\mathcal{M}$, we let $\Delta_\ell=\Delta_\ell(\bar{m}):=m_\ell-2\ell^2$. Then uniformly in $\bar{m}\in\mathcal{M}$,
\begin{align*}
	&\quad\,\frac{1}{8}\sum_{\ell=2}^{n-1}\frac{(m_{\ell+1}-m_\ell)^2}{\ell^2}=\frac{1}{8}\sum_{\ell=2}^{n-1}\frac{(4\ell+2+\Delta_{\ell+1}-\Delta_{\ell})^2}{\ell^2}\\
	&=O(n^\delta)+\frac{1}{8}\sum_{\ell=2}^{n-1}\frac{(4\ell+\Delta_{\ell+1}-\Delta_{\ell})^2}{\ell^2}\\
	&=O(n^\delta)+2n+\sum_{\ell=2}^{n-1}\frac{\Delta_{\ell+1}-\Delta_\ell}{\ell}+\frac{1}{8}\sum_{\ell=2}^{n-1}\frac{(\Delta_{\ell+1}-\Delta_\ell)^2}{\ell^2}.
\end{align*}
Observe that
\begin{align*}
	\sum_{\ell=2}^{n-1}\frac{\Delta_{\ell+1}-\Delta_\ell}{\ell}&=\sum_{\ell=2}^{n-1}\left(\frac{\Delta_{\ell+1}}{\ell+1}-\frac{\Delta_\ell}{\ell}\right)+\sum_{\ell=2}^{n-1}\frac{\Delta_{\ell+1}}{\ell(\ell+1)}\\
	&=\frac{\Delta_n}{n}-\frac{\Delta_2}{2}+\sum_{\ell=2}^{n-1}\frac{\Delta_{\ell+1}}{\ell(\ell+1)}=O(n^\delta).
\end{align*}
Hence we get uniformly in $\bar{m}\in \mathcal{M}$,
\begin{align*}
	\prod_{\ell=2}^{n-1}p(m_\ell,m_{\ell+1})=\left(\prod_{\ell=2}^{n-1}\frac{1}{\sqrt{2\pi}2\ell}\right)\exp\left\{-2n-\sum_{\ell=2}^{n-1}\frac{(\Delta_{\ell+1}-\Delta_\ell)^2}{8\ell^2}+O(n^{3\delta})\right\},
\end{align*}
and consequently,
\begin{align}\label{aa}
\begin{aligned}
	\sum_{\bar{m}\in\mathcal{M}}\prod_{\ell=2}^{n-1}p(m_\ell,m_{\ell+1})&=\exp\left\{-2n+O(n^{3\delta})\right\}\sum_{\substack{(\Delta_2,\ldots,\Delta_n):\\\abs{\Delta_\ell}\le\ell^{1+\delta}\,\forall\,\ell\in[2,n]}}\prod_{\ell=2}^{n-1}b_\ell(\Delta_\ell,\Delta_{\ell+1}),	
\end{aligned}
\end{align}
where $b_\ell(k_1,k_2):=\frac{1}{\sqrt{2\pi}2\ell}\exp\left\{-\frac{(k_1-k_2)^2}{8\ell^2}\right\}$. For the following proof, we shall introduce a lemma whose proof will be postponed till the end of this subsection.
\begin{lem}\label{maxabs}
	For any $\rho\in (0,1)$,
	\begin{align*}
		\sum_{\substack{(\Delta_{n^\rho+1},\ldots,\Delta_n):\\\abs{\Delta_\ell}\le \ell^{1+\delta}\,\forall\,\ell\in(n^\rho,n]}}\prod_{\ell=n^\rho+1}^{n-1}b_{\ell}(\Delta_\ell,\Delta_{\ell+1})\ge \exp\left\{-O(n^{[3-2\rho(1+\delta)]\vee 2\delta})\right\}.
	\end{align*}
\end{lem}
Take $\rho\in(0,1)$ such that $3-2\rho(1+\delta)<1-2\delta+\varepsilon$. Let $k=\sup\{m\in\N:\rho^m\ge 3\delta\}$. Note that uniformly in $\ell\in[2,n-1]$, $b_{\ell}(\Delta_\ell,\Delta_{\ell+1})=\exp\left\{-O(n^{2\delta})\right\}$.
Using this in the last term in \eqref{aa} for $\ell=n^{3\delta/\rho^m}=:q_m$, $m=0,1,\ldots,k$,
we can decompose the sum into
\begin{align}\label{decompose}
\begin{aligned}
	&\quad\,\sum_{\substack{(\Delta_2,\ldots,\Delta_n):\\\abs{\Delta_\ell}\le\ell^{1+\delta}\,\forall\,\ell\in[2,n]}}\prod_{\ell=2}^{n-1}b_{\ell}(\Delta_\ell,\Delta_{\ell+1})\\
	&=\exp\left\{-O(n^{2\delta})\right\}\sum_{\substack{(\Delta_2,\ldots,\Delta_n):\\\abs{\Delta_\ell}\le\ell^{1+\delta}\,\forall\,\ell\in[2,n]}}\prod_{\ell\in[2,n-1]\setminus\{q_0,\ldots,q_k\}}b_{\ell}(\Delta_\ell,\Delta_{\ell+1})\\
	&=\exp\left\{-O(n^{2\delta})\right\}\left[\sum_{\substack{(\Delta_2,\ldots,\Delta_{q_0}):\\\abs{\Delta_\ell}\le\ell^{1+\delta}\,\forall\,\ell\in[2,q_0]}}\prod_{\ell=2}^{q_0-1}b_{\ell}(\Delta_\ell,\Delta_{\ell+1})\right]\\
	&\cdot\prod_{j=0}^{k} \left[\sum_{\substack{(\Delta_{q_j+1},\ldots,\Delta_{q_{j+1}}):\\\abs{\Delta_\ell}\le\ell^{1+\delta}\,\forall\,\ell\in[q_j+1,q_{j+1}]}}\prod_{\ell=q_j+1}^{q_{j+1}}b_{\ell}(\Delta_\ell,\Delta_{\ell+1})\right],	
\end{aligned}
\end{align}
where $q_{k+1}:=n$. It follows from Lemma \ref{maxabs} that
\begin{align*}
	&\quad\,\prod_{j=0}^{k} \left[\sum_{\substack{(\Delta_{q_j+1},\ldots,\Delta_{q_{j+1}}):\\\abs{\Delta_\ell}\le\ell^{1+\delta}\,\forall\,\ell\in[q_j+1,q_{j+1}]}}\prod_{\ell=q_j+1}^{q_{j+1}}b_{\ell}(\Delta_\ell,\Delta_{\ell+1})\right]\\
	&\ge \exp\left\{-O(n^{[3-2\rho(1+\delta)]\vee 2\delta})\right\}\ge \exp\left\{-O(n^{(1-2\delta)\vee 2\delta})\right\}
\end{align*}
for $n$ large.
For the other term, we have
\begin{align*}
	&\quad\,\sum_{\substack{(\Delta_2,\ldots,\Delta_{q_0}):\\\abs{\Delta_\ell}\le\ell^{1+\delta}\,\forall\,\ell\in[2,q_0]}}\prod_{\ell=2}^{q_0-1}b_{\ell}(\Delta_\ell,\Delta_{\ell+1})\ge \sum_{\substack{(\Delta_2,\ldots,\Delta_{q_0}):\\\abs{\Delta_\ell}\le\ell\,\forall\,\ell\in[2,q_0]}}\prod_{\ell=2}^{q_0-1}b_{\ell}(\Delta_\ell,\Delta_{\ell+1})\\
	&=\sum_{\substack{(\Delta_2,\ldots,\Delta_{q_0}):\\\abs{\Delta_\ell}\le\ell\,\forall\,\ell\in[2,q_0]}}\prod_{\ell=2}^{q_0-1}\left(\frac{1}{\sqrt{2\pi}2\ell}\exp\left\{-O(1)\right\}\right)=\exp\left\{-O(n^{3\delta})\right\}.
\end{align*}
Therefore for $n$ large,
\begin{align*}
	\sum_{\substack{(\Delta_2,\ldots,\Delta_n):\\\abs{\Delta_\ell}\le\ell^{1+\delta}\,\forall\,\ell\in[2,n]}}\prod_{\ell=2}^{n-1}b_{\ell}(\Delta_\ell,\Delta_{\ell+1})\ge \exp\left\{-n^{[(1-2\delta)\vee 3\delta]+\varepsilon}\right\}.
\end{align*}
Substituting back into \eqref{aa}, we get \eqref{suffices}. That concludes the proof of Proposition \ref{pp} and Lemma \ref{Y}.
\end{proof}
\end{proof}
\begin{proof}[Proof of Lemma \ref{maxabs}]
	We define $f$: $\r^{n-n^\rho}\rightarrow\r$ by
	\begin{align*}
		f(\Delta_{n^\rho+1},\ldots,\Delta_n)=\left(\frac{1}{\sqrt{2\pi}2n^\rho}\exp\left\{-\frac{\Delta^2_{n^\rho+1}}{8n^{2\rho}}\right\}\right)\prod_{\ell=n^\rho+1}^{n-1}b_{\ell}(\Delta_\ell,\Delta_{\ell+1}).
	\end{align*}
Recall that $[x]$ is the largest integer less than or equal to $x$. Then it is easy to see that uniformly in $(\Delta_{n^\rho+1},\ldots,\Delta_n)$ with $\abs{\Delta_\ell}\le \ell^{1+\delta}$ for $\ell\in(n^\rho,n]$,
\[f([\Delta_{n^\rho+1}],\ldots,[\Delta_n])=\exp\left\{O(n^\delta)\right\}f(\Delta_{n^\rho+1},\ldots,\Delta_n).\]
It follows that
\begin{equation}
    \begin{split}
&\quad\,\sum_{\substack{(\Delta_{n^\rho+1},\ldots,\Delta_n):\\\abs{\Delta_\ell}\le \ell^{1+\delta}\,\forall\,\ell\in(n^\rho,n]}}\prod_{\ell=n^\rho+1}^{n-1}b_{\ell}(\Delta_\ell,\Delta_{\ell+1})\\
		&=\exp\left\{-O(n^{2\delta})\right\}\sum_{\substack{(\Delta_{n^\rho+1},\ldots,\Delta_n)\in\z^{n-n^\rho}:\\\abs{\Delta_\ell}\le \ell^{1+\delta}\,\forall\,\ell\in(n^\rho,n]}}f(\Delta_{n^\rho+1},\ldots,\Delta_n)\\
		&=\exp\left\{-O(n^{2\delta})\right\}\int_{\substack{\abs{[\Delta_\ell]}\le \ell^{1+\delta}\\\forall\,\ell\in(n^\rho,n]}}f([\Delta_{n^\rho+1}],\ldots,[\Delta_n])\prod_{\ell=n^\rho+1}^n \d \Delta_{\ell}\\
		&\ge\exp\left\{-O(n^{2\delta})\right\}\int_{\substack{\abs{\Delta_\ell}\le \ell^{1+\delta}-1\\\forall\,\ell\in(n^\rho,n]}}f(\Delta_{n^\rho+1},\ldots,\Delta_n)\prod_{\ell=n^\rho+1}^n \d \Delta_{\ell}\\
		&=\exp\left\{-O(n^{2\delta})\right\}\p\left(\abs{B_{4\sum_{j=n^\rho}^{\ell-1} j^2}}\le \ell^{1+\delta}-1\,\forall\,\ell\in(n^\rho,n]\cap\N\right)\\
		&\ge \exp\left\{-O(n^{2\delta})\right\}\p\left(\abs{B_t}\le n^{\rho(1+\delta)}-1\,\forall\,t\in(0,4n^3]\right),
    \end{split}
\end{equation}
	where $B$ is a 1D Brownian motion starting from $0$ and we use the fact that $f(\Delta_{n^\rho+1},\ldots,\Delta_n)$ is the density function of $\left(B_{4\sum_{j=n^\rho}^{\ell-1} j^2}:\ell\in(n^\rho,n]\cap\N\right)$. The law of the maximum absolute value of Brownian motion is (Cf. page 342 in \cite{feller1991introduction})
	\begin{align*}
		\p\left(\sup_{0\le t\le u}\abs{B_t}<r\right)=\frac{4}{\pi}\sum_{k\ge 0}(-1)^k\frac{1}{2k+1}\exp\left\{-\frac{(2k+1)^2\pi^2}{8r^2}u\right\}.
	\end{align*}
	Thus we have
	\[\p\left(\abs{B_t}\le n^{\rho(1+\delta)}-1\,\forall\,t\in(0,4n^3]\right)=\exp\left\{-O(n^{3-2\rho(1+\delta)})\right\},\]
	which leads to the conclusion.
\end{proof}
\subsubsection{Proof of Proposition \ref{moment} (2)}
Recall that $l(x,y)=\min\{m\ge 1:D(x,r_{n,m})\cap D(y,r_{n,m})=\emptyset\}$. In this part, we fix $x,y\in U_n$ such that $l(x,y)\le n-3\log n$ and simply write $l=l(x,y)$. Let $\widetilde{l}:=l+3\log n$.

 The idea of the proof is as follows. Lemma \ref{preliminary} tells us that what happens inside $D(y,r_{n,\widetilde{l}})$ is almost independent of what happens outside $D(y,r_{n,l})$. We will see that most of $N^x_{n,k}$ are determined by what happens outside $D(y,r_{n,l})$ and it is trivial that $N^y_{n,k}$, $k\ge \widetilde{l}+1$, depend on what happens inside $D(y,r_{n,\widetilde{l}})$. Hence by removing the restrictions on other $N^x_{n,k}$ and $N^y_{n,k}$ from the event $\{Y(n,x)=Y(n,y)=1\}$, the obtained event can be decomposed into two almost independent parts. The probability of each part can be estimated using the connection between $N^x_{n,k}$ (or $N^y_{n,k}$) and the upcrossings of some MJP.

 Let $\widetilde{N}^y_{n,\widetilde{l}}$ be the number of excursions from $\partial D(y,r_{n,\ell})$ to $\partial D(y,r_{n,\widetilde{\ell}})$ before $\mathsf{H}_{\partial D(0, K_n)}$.
For any subset $I\subset \{1,\ldots,n\}$ and $x\in U_n$, we set
\begin{align*}
\dot{\Gamma}^x_n(I)&=\dot{\Gamma}^{x}_n(I,\delta):=\Big\{\abs{N^x_{n,k}-2k^2}\le k^{1+\delta}\,\forall\,k\in I\Big\},\\
	\Gamma^x_n(I)&=\Gamma^{x}_n(I,\delta):=\dot{\Gamma}^x_n(I)\cap\left\{N^x_{n,n+1}\in\left[ \frac{2n^2-n^{1+\delta}}{3\log n},n^3\right]\right\}.
\end{align*}
The cases we will pay special attention to are $I=I_l\text{ or } J_l$ with
 $I_l:=\{1,\ldots,l-3\}\cup\{\widetilde{l}+1,\ldots,n\}$ and $J_l:=\{\widetilde{l}+1,\ldots,n\}$.

 In the sequel, we shall use the link between $\widetilde{N}^y_{n,\widetilde{l}}$, $N^y_{n,k}$ ($\widetilde{l}+1\le k\le n+1$) and  upcrossings of  MJP. So it is necessary to give an upper bound for $\widetilde{N}^y_{n,\widetilde{l}}$ to control the error term. Observe that
 \begin{align}\label{5/2}
 	\e\left(Y'(n,x)Y'(n,y)\,;\,\widetilde{N}^y_{n,\widetilde{l}}>n^{5/2}\right)\le \p\left(\abs{N^y_{n,l}-2l^2}\le l^{1+\delta},\,\widetilde{N}^y_{n,\widetilde{l}}>n^{5/2}\right).
 \end{align}
With the same idea as in Lemma \ref{relate}, $N^y_{n,l}$ and $\widetilde{N}^y_{n,\widetilde{l}}$ can be related to the number of upcrossings of MJP. Letting $\{\gamma_j:j\ge 1\}$ (resp. $\{X_j:j\ge 1\}$) be a sequences of i.i.d. $\{0,1,\ldots\}$-valued geometric random variables with success probability $\frac{3\log n}{1+3\log n}$ (resp. Bernoulli random variables with success probability $\frac{3\log n}{1+3\log n}$), we have
\begin{align*}
	\eqref{5/2}&\lesssim \p\left(\sum_{j=1}^{2l^2+l^{1+\delta}}\gamma_j>n^{5/2}\right)\le \p\left(\sum_{j=1}^{2n^2+n^{1+\delta}}\gamma_j\ge n^{5/2}\right)\\
	&=\p\left(\sum_{j=1}^{n^{5/2}+2n^2+n^{1+\delta}}X_j<2n^2+n^{1+\delta}\right)<e^{-cn^{5/2}}
\end{align*}
for some constant $c>0$. Hence thanks to Proposition \ref{moment} (1), it suffices to show that
\begin{equation}\label{n^5/2}
\e\left(Y'(n,x)Y'(n,y)\,;\,\widetilde{N}^y_{n,\widetilde{l}}\le n^{5/2}\right)\le \exp\left\{2l+n^{[3\delta\vee(1-2\delta)]+o(1)}\right\}\e[Y'(n,x)]\e[Y'(n,y)]. 
\end{equation}

Recall that $\mathcal{G}^y_{(r_{n,l},r_{n,\widetilde{l}})}$ is the $\sigma$-algebra generated by the excursions from $\partial D(y,r_{n,\ell})$ to $\partial D(y,r_{n,\widetilde{\ell}})$.
It is easily seen that $D(y,r_{n,l-3})\supset D(x,r_{n,l})$, which together with $D(y,r_{n,l})\cap D(x,r_{n,l})=\emptyset$ implies that $\Gamma^x_n(I_l)\in \mathcal{G}^y_{(r_{n,l},r_{n,\widetilde{l}})}$ (because we only lose the information on the excursions from $\partial D(y,r_{n,\widetilde{l}})$ to $\partial D(y,r_{n,l})$ which does not affect $N^x_{n,k}$ for $k\in\{1,\ldots,l-3\}\cup\{l+1,\ldots,n\}\supset I_l$). Therefore
\begin{align}\label{decouple}
\begin{aligned}
	&\quad\,\e\left(Y'(n,x)Y'(n,y);\,\widetilde{N}^y_{n,\widetilde{l}}\le n^{5/2}\right)\le \p\left(\Gamma^x_n(I_l)\cap \Gamma^y_n(J_l)\,;\,\widetilde{N}^y_{n,\widetilde{l}}\le n^{5/2}\right)\\
	&=\e\left[\p\left(\Gamma^y_n(J_l)\,\big\vert\,\mathcal{G}^y_{(r_{n,l},r_{n,\widetilde{l}})}\right);\,\Gamma^x_n(I_l),\,\widetilde{N}^y_{n,\widetilde{l}}\le n^{5/2}\right]\\
	&=\sum_{m=0}^{n^{5/2}}\e\left[\p\left(\Gamma^y_n(J_l)\,\big\vert\,\mathcal{G}^y_{(r_{n,l},r_{n,\widetilde{l}})}\right);\,\Gamma^x_n(I_l),\,\widetilde{N}^y_{n,\widetilde{l}}=m\right].
\end{aligned}
\end{align}
Let us denote by $\widetilde{N}^y_{n,k}(m,\widetilde{l})$ (resp. $N^y_{n,k}(m,\widetilde{l}+1)$) the number of excursions from $\partial D(y,r_{n,k-1})$ to $\partial D(y,r_{n,k})$ until completion of the first $m$ excursions from $\partial D(y,r_{n,l})$ to $\partial D(y,r_{n,\widetilde{l}})$ (resp. from $\partial D(y,r_{n,\widetilde{l}})$ to $\partial D(y,r_{n,\widetilde{l}+1})$) for $k\ge \widetilde{l}$. Set
$$
\widetilde{\Gamma}^y_n(J_l,m,\widetilde{l})=\Big\{\abs{\widetilde{N}^y_{n,k}(m,\widetilde{l})-2k^2}\le k^{1+\delta}\,\forall\,k\in J_l,\,\widetilde{N}^y_{n,n+1}(m,\widetilde{l})\ge \frac{2n^2-n^{1+\delta}}{3\log n}\Big\},
$$
 and
$$\Gamma^y_n(J_l,m,\widetilde{l}+1)=\Big\{\abs{N^y_{n,k}(m,\widetilde{l}+1)-2k^2}\le k^{1+\delta}\,\forall\,k\in J_l,\,N^y_{n,n+1}(m,\widetilde{l}+1)\ge \frac{2n^2-n^{1+\delta}}{3\log n}\Big\},
$$
i.e., replacing all $N^y_{n,k}$ in the definition of $\Gamma^y_n(J_l)$ by $\widetilde{N}^y_{n,k}(m,\widetilde{l})$ and $N^y_{n,k}(m,\widetilde{l}+1)$ respectively. Observe that $\Gamma^y_n(J_l)=\widetilde{\Gamma}^y_n(J_l,m,\widetilde{l})$ on $\big\{\widetilde{N}^y_{n,\widetilde{l}}=m\big\}$. It is easy to see that $$\widetilde{\Gamma}^y_n(J_l,m,\widetilde{l})\in \mathcal{H}^y_{(r_{n,l},r_{n,\widetilde{l}},r_{n,\widetilde{l}+1})}(m).$$ Moreover,
by considering the link between $\widetilde{N}^y_{n,k}(m,\widetilde{l})$ and the number of upcrossings of some MJP, we can deduce that $\widetilde{\Gamma}^y_n(J_l,m,\widetilde{l})$ satisfies Condition \eqref{On} with some parameter $\lambda>0$ uniform for all $0\le m\le n^{5/2}$.
Hence it follows from Lemma \ref{preliminary} that uniformly in $m,n\in \N$ with $0\le m\le n^{5/2}$ and $y\in U_n$,
\begin{equation}\label{decoupling}
\begin{split}
\p\left(\Gamma^y_n(J_l)\,\big\vert\,\mathcal{G}^y_{(r_{n,l},r_{n,\widetilde{l}})}\right)1_{\big\{\widetilde{N}^y_{n,\widetilde{l}}=m\big\}}
=\;&\p\left(\widetilde{\Gamma}^y_n(J_l,m,\widetilde{l})\,\big\vert\,\mathcal{G}^y_{(r_{n,l},r_{n,\widetilde{l}})}\right)1_{\big\{\widetilde{N}^y_{n,\widetilde{l}}=m\big\}}\\
	=\;& \left(1+O(n^{-1/2}\log n)\right)\p\left(\widetilde{\Gamma}^y_n(J_l,m,\widetilde{l})\right)1_{\big\{\widetilde{N}^y_{n,\widetilde{l}}=m\big\}}.	
 \end{split}
\end{equation}
\begin{rem}\label{remark}
The following results are also obtained using the link between $\widetilde{N}^y_{n,k}(m,\widetilde{l})$ and the number of upcrossings of MJP. Uniformly in $y\in U_n$,

\noindent (1):
$$
\begin{aligned}[t]
			\sup\limits_{m\ge 0}\p\left(\widetilde{\Gamma}^y_n(J_l,m,\widetilde{l})\right)&\le \left(1+O(n^{-3})\right)\sup\limits_{m_{\widetilde{l}+1}:\abs{m_{\widetilde{l}+1}-2(\widetilde{l}+1)^2}\le (\widetilde{l}+1)^{1+\delta}}\p\left({\Gamma}^y_n(J_l,m,\widetilde{l}+1)\right)\\
		&\le \exp\left\{O(\widetilde{l}^{2\delta})\right\}\inf\limits_{m_{\widetilde{l}+1}:\abs{m_{\widetilde{l}+1}-2(\widetilde{l}+1)^2}\le (\widetilde{l}+1)^{1+\delta}}\p\left({\Gamma}^y_n(J_l,m,\widetilde{l}+1)\right);
\end{aligned}
$$
(2):
$$
\begin{aligned}[t]
			\e[Y'(n,y)]&\ge \left(1+O(n^{-3})\right)\p\left[\dot{\Gamma}^y_n\big(\{1,\ldots,\widetilde{l}+1\}\big)\right]\\		&\cdot\inf\limits_{m_{\widetilde{l}+1}:\abs{m_{\widetilde{l}+1}-2(\widetilde{l}+1)^2}\le (\widetilde{l}+1)^{1+\delta}}\p\left({\Gamma}^y_n(J_l,m,\widetilde{l}+1)\right);
\end{aligned}
$$
(3):
$$
\begin{aligned}[t]
		\p\left[\dot{\Gamma}^y_n\big(\{1,\ldots,\widetilde{l}+1\}\big)\right]\ge\exp\left\{-2\widetilde{l}-\widetilde{l}^{[3\delta\vee(1-2\delta)]+o_{\widetilde{l}}(1)}\right\}\ge \exp\left\{-2l-n^{[3\delta\vee(1-2\delta)]+o(1)}\right\};
		\end{aligned}
$$
(4):
$$
\begin{aligned}[t]			\p\left[\Gamma^y_n(I_l)\right]\le \exp\left\{O(n^{2\delta}\log n)\right\}\e[Y'(n,y)].
\end{aligned}
$$
	The first two inequalities come from the Markov property of the upcrossings. The third one can be seen from the analysis in \S\ref{first}. The last one is obtained using the same arguments as that in \eqref{decompose}.
\end{rem}
	The first three inequalities in the remark imply that
	\begin{align*}
		\sup\limits_{m\ge 0}\p\left(\widetilde{\Gamma}^y_n(J_l,m,\widetilde{l})\right)\le \exp\left\{2l+n^{[3\delta\vee(1-2\delta)]+o(1)}\right\}\e[Y'(n,y)].
	\end{align*}
	
Substituting back into \eqref{decoupling} and \eqref{decouple}, we get uniformly in $y\in U_n$,
\begin{align*}
	\sum_{m=0}^{n^{5/2}}\p\left(\Gamma^y_n(J_l)\,\big\vert\,\mathcal{G}^y_{(r_{n,l},r_{n,\widetilde{l}})}\right)1_{\big\{\widetilde{N}^y_{n,\widetilde{l}}=m\big\}}&\le\left(1+O(n^{-1/2}\log n)\right)n^{5/2}\sup_{m\ge 0}\p\left(\widetilde{\Gamma}^y_n(J_l,m,\widetilde{l})\right)\\
	&\le \exp\left\{2l+n^{[3\delta\vee(1-2\delta)]+o(1)}\right\}\e[Y'(n,y)],
\end{align*}
and consequently, uniformly in $x,y\in U_n$,
\begin{align*}
\e\left(Y'(n,x)Y'(n,y)\,;\,\widetilde{N}^y_{n,\widetilde{l}}\le n^{5/2}\right)
\le\;& \exp\left\{2l+n^{[3\delta\vee(1-2\delta)]+o(1)}\right\}\e[Y'(n,y)]\p\left[\Gamma^x_n(I_l)\right]\\
	\overset{(*)}{\le}\;& \exp\left\{2l+n^{[3\delta\vee(1-2\delta)]+o(1)}\right\}\e[Y'(n,x)]\e[Y'(n,y)],
\end{align*}
where $(*)$ comes from the last inequality in Remark \ref{remark}. That concludes the proof of \eqref{n^5/2} and Proposition \ref{moment} (2).



\end{document}